\newtheorem{thm}{Theorem}[section]
\newtheorem{cor}[thm]{Corollary}
\newtheorem{lem}[thm]{Lemma}
\newtheorem{prop}[thm]{Proposition}
\theoremstyle{definition}
\newtheorem{defn}[thm]{Definition}
\theoremstyle{remark}
\numberwithin{equation}{section}
\numberwithin{thm}{section}
\newcommand{\C}{{\mathbb{C}}}
\newcommand{\R}{{\mathbb{R}}}
\newcommand{\N}{\mathbb N}
\DeclareMathOperator*{\diam}{diam}
\DeclareMathOperator*{\dist}{dist}
\newcommand{\eps}{{\varepsilon}}
\newcommand{\propagateomega}{{e^{it\Delta_{\Omega}}}}
\newcommand{\lsm}{\lesssim}
\newcommand{\ld}{\Delta_{\Omega}}
\newcommand{\po}{P^{\Omega}}
\newcommand{\lpo}{L^p(\Omega)}
\newcommand{\hoo}{H^1_0(\Omega)}
\newcommand{\hr}{ H^1(\R^3)}
\newcommand{\lt}{L_{t,x}^{5}(\R\times\Omega)}
\newcommand{\prdn}{e^{it_n\ld}}
\newcommand{\prdnn}{e^{-it_n\ld}}
\newcommand{\ro}{\R\times\Omega}
\newcommand{\on}{\Omega_n}
\newcommand{\qtq}[1]{\quad\text{#1}\quad}
\let\Re=\undefined\DeclareMathOperator*{\Re}{Re}
\let\Im=\undefined\DeclareMathOperator*{\Im}{Im}
\newcounter{smalllist}
\newenvironment{CI}{\begin{list}{{\ $\bullet$\ }}{%
\setlength{\topsep}{0mm}\setlength{\parsep}{0mm}\setlength{\itemsep}{0mm}%
\setlength{\labelwidth}{0mm}\setlength{\itemindent}{-1.5em}\setlength{\leftmargin}{1.5em}%
\setlength{\labelsep}{0mm} }}{\end{list}}
\title{The focusing cubic NLS on exterior domains in three dimensions}
\author[R. Killip]{Rowan Killip}
\address{Department of Mathematics, University of California, Los Angeles, CA 90095}%
\email{killip@math.ucla.edu}
\author[M. Visan]{Monica Visan}
\address{Department of Mathematics, University of California, Los Angeles, CA 90095}
\email{visan@math.ucla.edu}
\author[X. Zhang]{Xiaoyi Zhang}
\address{Department of Mathematics, University of Iowa, Iowa City, IA 52242 and the
Chinese Academy of Science, Beijing}%
\email{xiaoyi-zhang@uiowa.edu}
{\normalsize }
\begin{document}
\maketitle
\begin{abstract}
We consider the focusing cubic NLS in the exterior $\Omega$ of a smooth, compact, strictly convex obstacle in three dimensions. We prove that the threshold for global existence and scattering is the same as for the problem posed on Euclidean space. Specifically, we prove that if $E(u_0)M(u_0)<E(Q)M(Q)$ and $\|\nabla u_0\|_2\|u_0\|_2<\|\nabla Q\|_2\|Q\|_2$, the corresponding solution to the initial-value problem with Dirichlet boundary conditions exists globally and scatters to linear evolutions asymptotically in the future and in the past. Here, $Q(x)$ denotes the ground state for the focusing cubic NLS in $\R^3$.
\end{abstract}

\section{Introduction}
We consider the focusing cubic nonlinear Schr\"odinger equation in the exterior of a strictly convex obstacle $\Omega^c\subset\R^3$ with Dirichlet boundary
conditions
\begin{align}\label{nls}(\text{NLS}_\Omega)
\begin{cases}
i u_t+\Delta_\Omega u=-|u|^2u \\
u(0,x)=u_0(x).
\end{cases}
\end{align}
Here $u:\R\times \Omega\to \C$ and $-\Delta_\Omega$ denotes the Dirichlet Laplacian, which is a self-adjoint operator on $L^2(\Omega)$ with form domain $H^1_0(\Omega)$.  We take initial data $u_0\in H^1_0(\Omega)$.

When posed on the whole Euclidean space $\R^3$, the problem is scale invariant.  More precisely, the mapping
\begin{align}\label{scaling}
u(t,x)\mapsto u^{\mu}(t,x):=\mu \:\! u(\mu^2 t, \mu x) \qtq{with} \mu>0
\end{align}
leaves the class of solutions to $\text{NLS}_{\R^3}$ invariant.  This scaling also identifies the space $\dot H_x^{1/2}$ as the critical space.  Since the presence of the obstacle does not change the intrinsic dimensionality of the problem, we may regard equation \eqref{nls} is being subcritical for data in $H^1_0(\Omega)$.

Throughout this paper we restrict ourselves to the following notion of solution:

\begin{defn}[Solution] Let $I$ be a time interval containing the origin. A function $u:I\times\Omega\to \C$ is called a (strong) solution to \eqref{nls} if it lies in the class $C_t(I', H^1_0(\Omega))\cap L_{t,x}^5(I'\times\Omega)$ for any compact interval $I'\subset I$, and it satisfies the Duhamel formula
\begin{align}\label{duhamel}
u(t)=e^{it\Delta_\Omega}u_0+i\int_0^t e^{i(t-s)\Delta_\Omega}(|u|^2u)(s)\,ds
\end{align}
for all $t\in I$.  The interval $I$ is said to be maximal if the solution cannot be extended beyond $I$. We say $u$ is a global solution if $I=\R$.
\end{defn}

In this formulation, the Dirichlet boundary condition is enforced through the appearance of the linear propagator associated to the Dirichlet Laplacian.

Local well-posedness for the defocusing version of \eqref{nls} was established in several works; see, for example, \cite{Anton08, bss:schrodinger, iva:convex, PlanchonVega}.  Standard arguments relying on the Strichartz estimates proved in \cite{iva:convex} and the equivalence of Sobolev spaces proved in \cite{kvz:harmonic} (see Theorem~\ref{T:equiv}) can be used to construct a local theory for \eqref{nls}.

\begin{thm}[Local well-posedness]\label{T:lwp} Let $u_0\in H^1_0(\Omega)$.  Then there exists a maximal time interval $I$ such that \eqref{nls} admits a strong solution on $I$.  The interval $I$ and the solution $u$ are uniquely determined by $u_0$.  Moreover, the following hold:
\begin{CI}
\item $I$ is an open interval containing the origin.
\item $($Conservation laws$)$  Mass and energy are conserved by the flow: for any $t\in I$,
\begin{align*}
M(u(t))&:=\int_{\Omega} |u(t,x)|^2 \,dx=M(u_0),\\
E(u(t))&:=\int_{\Omega}\tfrac12|\nabla u(t,x)|^2 -\tfrac 14|u(t,x)|^4 \,dx=E(u_0).
\end{align*}
\item $($Blowup$)$ If $\sup I<\infty$, then $\limsup_{t\to \sup I}\|u(t)\|_{H^1_0(\Omega)}=\infty$.
\item $($Scattering$)$ Suppose $[0, \infty)\subseteq I$ and $\|u\|_{L_{t,x}^5([0, \infty)\times\Omega)}<\infty$.  Then $u$ scatters forward in time, that is, there exists $u_+\in H^1_0(\Omega)$ such that
\begin{align*}
\|u(t)-e^{it\Delta_\Omega}u_+\|_{H^1_0(\Omega)}\to 0 \qtq{as} t\to \infty.
\end{align*}
\item $($Small data GWP$)$ There exists $\eta_0>0$ such that if $\|u_0\|_{H^1_0(\Omega)}\leq \eta_0$, then the solution $u$ to \eqref{nls} is global and satisfies
$$
\|u\|_{L_{t,x}^5(\R\times\Omega)}\lesssim \|u_0\|_{H^1_0(\Omega)}.
$$
In particular, the solution scatters in both time directions.
\end{CI}
\end{thm}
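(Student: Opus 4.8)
The plan is to run a standard contraction-mapping argument for the Duhamel map
\[
\Phi(u)(t) := e^{it\Delta_\Omega}u_0 + i\int_0^t e^{i(t-s)\Delta_\Omega}\bigl(|u|^2u\bigr)(s)\,ds
\]
on the space $X(I):=C_t(I;H^1_0(\Omega))\cap L_{t,x}^5(I\times\Omega)$, equipped with a norm that in addition controls one full derivative of $u$ in a Schr\"odinger-admissible Lebesgue space on $I\times\Omega$. Since \eqref{nls} is $\dot H^{1/2}$-critical while the data are placed at the higher regularity $H^1_0(\Omega)$, the nonlinear estimate will carry a positive power of $|I|$ to spare; this surplus is what makes $\Phi$ contract on short intervals for arbitrary data, and globally once $\|u_0\|_{H^1_0(\Omega)}$ is small.

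First I would assemble the two linear inputs. One is the family of Strichartz estimates for $e^{it\Delta_\Omega}$ outside a strictly convex obstacle from \cite{iva:convex}, available on essentially the same non-endpoint range of admissible pairs as on $\R^3$. The other is the equivalence $\|(-\Delta_\Omega)^{1/2}f\|_{L^p(\Omega)}\sim\|\,|\nabla|f\,\|_{L^p(\Omega)}$ for a suitable range of $p$ (Theorem~\ref{T:equiv}). I would use the latter to trade the nonlocal operator $(-\Delta_\Omega)^{1/2}$ --- which is what the $H^1_0(\Omega)$ norm and the Strichartz inequalities naturally see --- for the genuine gradient $\nabla$, to which the pointwise Leibniz bound $|\nabla(|u|^2u)|\lesssim|u|^2|\nabla u|$ applies; the Dirichlet condition then causes no difficulty, as $|u|^2u$ inherits it from $u$.

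With these in hand, the nonlinear estimate reduces to H\"older in spacetime and Sobolev embedding: I would bound $\|\langle\nabla\rangle(|u|^2u)\|$ in the dual of an admissible space by $\|u\|_{L_{t,x}^5}^2\,\|\langle\nabla\rangle u\|$ in a second admissible space, gaining a factor $|I|^{\theta}$ with $\theta>0$ because $L_{t,x}^5$ sits one half-derivative below the $H^1_0(\Omega)$ level at which we are working. Feeding this into the Strichartz bounds for $\Phi$ shows that $\Phi$ maps a ball of $X(I)$ into itself and is a contraction there, provided $|I|$ is small depending only on $\|u_0\|_{H^1_0(\Omega)}$, or with $I=\R$ provided $\|u_0\|_{H^1_0(\Omega)}\le\eta_0$; this produces the solution on a maximal interval $I$, with uniqueness (and continuous dependence on the data) obtained by applying the same estimates to a difference of solutions, and with $I$ open by construction. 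The conservation laws I would check first for smooth data and then pass to the limit by the usual stability argument. For the blowup alternative, the key point is that subcriticality makes the length of the interval returned by the contraction depend only on $\|u_0\|_{H^1_0(\Omega)}$; hence if $\|u(t)\|_{H^1_0(\Omega)}$ stayed bounded as $t\uparrow\sup I<\infty$, one could restart the iteration from a time close to $\sup I$ and continue the solution past it. Scattering is the observation that once $\|u\|_{L_{t,x}^5([0,\infty)\times\Omega)}<\infty$, Strichartz applied to the tail $\int_t^\infty e^{i(t-s)\Delta_\Omega}(|u|^2u)(s)\,ds$ forces $e^{-it\Delta_\Omega}u(t)$ to be Cauchy in $H^1_0(\Omega)$; and the small-data global statement is exactly the $I=\R$ instance of the contraction.

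The only genuinely non-Euclidean ingredients are the two cited facts, and I expect the one point needing care is to confirm that the admissible pairs supplied by \cite{iva:convex} are compatible with the range of $p$ in which the Riesz transform $\nabla(-\Delta_\Omega)^{-1/2}$ is bounded on $L^p(\Omega)$, so that the passages between $(-\Delta_\Omega)^{1/2}$ and $\nabla$ inside the nonlinear estimate are all legitimate; once that bookkeeping is arranged, the remainder of the argument is identical to the classical one on $\R^3$.
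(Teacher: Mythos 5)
Your proposal is correct and is precisely the argument the paper has in mind: it proves no details for Theorem~\ref{T:lwp}, stating only that the result follows by ``standard arguments relying on the Strichartz estimates proved in \cite{iva:convex} and the equivalence of Sobolev spaces proved in \cite{kvz:harmonic},'' which is exactly your contraction scheme in $L^5_t H^{1,\frac{30}{11}}_0$ with the subcritical gain in $|I|$. Your closing caveat is also the right one, and it checks out: the exponents $\tfrac{30}{11}$ and its dual partner both lie below $3$, so Theorem~\ref{T:equiv} with $s=1$ applies throughout.
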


In this paper, we consider the global existence and scattering question for large initial data.  To put the problem in context, let us first recall some earlier results for the equivalent problem posed in the whole Euclidean space $\R^3$.  In \cite{dhr} it is shown that the focusing cubic NLS on $\R^3$ is globally well-posed and scatters whenever the initial data lies below the ground state threshold.  To state this result explicitly, let $Q$ denote the unique, positive, spherically-symmetric, decaying solution to the elliptic problem
\begin{align*}
\Delta Q-Q+Q^3=0.
\end{align*}
The main result in \cite{dhr} states that whenever
\begin{align}\label{dhr hyp}
E(u_0)M(u_0)<E(Q)M(Q) \qtq{and} \|\nabla u_0\|_{L^2_x}\|u_0\|_{L^2_x}<\|\nabla Q\|_{L^2_x}\|Q\|_{L^2_x},
\end{align}
the solution to $\text{NLS}_{\R^3}$ is global and it satisfies global $L_{t,x}^5$ spacetime bounds.  For spherically-symmetric initial data, this result was proved in the earlier work \cite{hr:radial}.

Note that in the Euclidean case, the quantity $E(u)M(u)$ is scale invariant and conserved in time.  Throughout this paper, we will refer to this quantity as the \emph{$H^{1/2}_x$-energy}.  Note that both of the assumptions on the initial data in \eqref{dhr hyp} are scale invariant.

By the variational characterization of $Q$, we know that the only functions $f\in H^1(\R^3)$ that satisfy
$$
E(f)M(f)=E(Q)M(Q) \qtq{and} \|\nabla f\|_{L^2(\R^3)}\|f\|_{L^2(\R^3)}=\|\nabla Q\|_{L^2(\R^3)}\|Q\|_{L^2(\R^3)}
$$
are of the form $f(x)= e^{i\theta}\rho Q(\rho [x-x_0])$ where $\theta\in[0,2\pi)$, $\rho\in (0,\infty)$, and $x_0\in \R^3$.  In the presence of an obstacle, \emph{there are no functions} $f\in H^1_0(\Omega)$ such that
$$
E(f)M(f)=E(Q)M(Q) \qtq{and} \|\nabla f\|_{L^2(\Omega)}\|f\|_{L^2(\Omega)}=\|\nabla Q\|_{L^2(\R^3)}\|Q\|_{L^2(\R^3)}.
$$
This can be seen easily by extending $f$ to be identically equal to zero on the obstacle.  The variational characterization of $Q$ on $\R^3$ then yields that $f$ must be $Q$ up to the symmetries of the equation; these functions, however, do not obey Dirichlet boundary conditions.

We contend that even though \eqref{nls} does not admit a direct analogue of the soliton $Q$, the threshold for global well-posedness and scattering is still the same as for the problem posed on $\R^3$.  The main result of this paper verifies the positive part of this claim:

\begin{thm}[Global well-posedness and scattering]\label{T:main} Let $u_0\in H^1_0(\Omega)$ satisfy
\begin{align}
E(u_0)M(u_0)&<E(Q)M(Q),\label{em}\\
\|\nabla u_0\|_{L^2(\Omega)}\|u_0\|_{L^2(\Omega)}&<\|\nabla Q\|_{L^2(\R^3)}\|Q\|_{L^2(\R^3)}.\label{km}
\end{align}
Then there exists a unique global solution $u$ to \eqref{nls} and
\begin{align}\label{E:STB}
\|u\|_{L_{t,x}^5(\R\times\Omega)} \leq C\bigl(M(u_0), E(u_0)\bigr).
\end{align}
In particular, $u$ scatters in both time directions.
\end{thm}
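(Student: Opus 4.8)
The plan is to run the concentration-compactness/rigidity scheme of Kenig--Merle, adapted to the exterior domain using the tools cited in the introduction (the Strichartz estimates of \cite{iva:convex} and the Sobolev-space equivalence of \cite{kvz:harmonic}, Theorem~\ref{T:equiv}). First I would record the variational consequences of \eqref{em}--\eqref{km}: using the sharp Gagliardo--Nirenberg inequality on $\R^3$ (applied after extending functions by zero across the obstacle), the hypotheses propagate in time, so that along the maximal lifespan one has a uniform bound $\|\nabla u(t)\|_{L^2(\Omega)}\|u(t)\|_{L^2(\Omega)}\le (1-\delta)\|\nabla Q\|_{L^2}\|Q\|_{L^2}$ together with a coercivity lower bound on the energy; in particular the blowup criterion in Theorem~\ref{T:lwp} is never triggered, so $u$ is automatically global, and the whole content of Theorem~\ref{T:main} is the spacetime bound \eqref{E:STB}. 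I would then set up the induction on the conserved $H^{1/2}_x$-energy $E(u)M(u)$: if \eqref{E:STB} fails, there is a critical threshold $\mathcal{E}_c<E(Q)M(Q)$ below which all solutions scatter with controlled $L^5_{t,x}$ norm and at which scattering fails.

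The core steps are then the standard three. (1) A linear profile decomposition for the Dirichlet propagator $e^{it\Delta_\Omega}$ on bounded-$H^1_0(\Omega)$ sequences, refining the Strichartz inequality; here one must track the possible concentration scales and centers of the profiles and, crucially, identify the limiting geometries --- a profile whose concentration scale shrinks to zero while its center runs to the boundary should, after rescaling, be governed either by $e^{it\Delta_{\R^3}}$ (center escaping to infinity or staying at fixed distance with scale $\gg$ curvature) or by $e^{it\Delta_{\HH}}$ on a half-space $\HH$; one then transfers the Euclidean scattering result \cite{dhr} (and, for the half-space, its analogue) to embed these profiles as global solutions to $\text{NLS}_\Omega$. (2) A stability/perturbation theory for \eqref{nls} in $L^5_{t,x}$, again built from the exterior Strichartz estimates, allowing one to superimpose the nonlinear profiles and conclude that the nonscattering critical solution must consist of a single profile. (3) Extraction of a minimal counterexample --- an almost periodic modulo symmetries solution $u_c$ at energy $\mathcal{E}_c$ with $\|u_c\|_{L^5_{t,x}}=\infty$ --- whose orbit $\{u_c(t)\}$ is precompact in $H^1_0(\Omega)$ after modulation by a scale $N(t)$ and (because spatial translation is not a symmetry of $\Omega$) with the spatial center essentially fixed; one must also rule out the scenario where $N(t)\to\infty$, i.e.\ concentration at the boundary, typically via a local-smoothing or Morawetz-type argument exploiting the strict convexity of the obstacle.

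Finally I would kill the minimal counterexample with a virial/Morawetz rigidity argument tailored to the exterior domain: using a suitably truncated weight that behaves like $|x|$ far away, one computes $\frac{d^2}{dt^2}\int a(x)|u_c(t,x)|^2\,dx$ and finds, thanks to the coercivity from the variational estimates, that the main term is bounded below by a positive multiple of the (conserved, strictly positive) $H^{1/2}_x$-energy, while the error terms --- including the boundary contributions, which have a favorable sign precisely because $\Omega^c$ is strictly convex (the outward normal derivative terms from integration by parts point the right way) and because the orbit does not concentrate at $\partial\Omega$ --- are negligible over long time intervals; integrating over $[0,T]$ and using the uniform bound on the left side forces the energy to vanish, hence $u_c\equiv 0$, a contradiction. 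The main obstacle, and the part requiring genuinely new work beyond transcribing the Euclidean argument, is the profile-decomposition step together with the exclusion of boundary concentration: one must understand the rescaled limiting dynamics near $\partial\Omega$ (half-space model), prove the corresponding scattering statement there, and establish a quantitative exterior local-smoothing estimate strong enough --- again leveraging strict convexity --- to prevent mass from collapsing onto the obstacle.
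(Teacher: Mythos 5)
Your overall architecture (coercivity from the variational hypotheses, concentration compactness, rigidity via a truncated virial with the convexity of $\partial\Omega$ supplying the sign of the boundary term) matches the paper's, but two of your choices create genuine gaps. First, you propose to induct on the conserved quantity $E(u)M(u)$. This quantity is not additive, so it does not decouple across profiles in the linear profile decomposition: from $M(f_n)=\sum_j M(\phi_n^j)+M(w_n^J)+o(1)$ and the analogous statement for $E$ one cannot conclude anything useful about $\sum_j E(\phi_n^j)M(\phi_n^j)$, and the induction step that forces a single profile at the critical level collapses. The paper circumvents this by exhausting the region $\{E(u_0)M(u_0)<E(Q)M(Q)\}$ with sublevel sets of the \emph{linear} free energies $F^\lambda=E+\lambda M$ (see \eqref{Exhaust A}) and running the entire argument at a fixed $\lambda_0$ with critical value $F^{\lambda_0}_c<F^{\lambda_0}_*$; some such linearization is needed for your extraction of a minimal counterexample to make sense.

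Second, you build your profile decomposition around concentration scales, a half-space limiting geometry, a frequency-scale modulation $N(t)$, and the exclusion of boundary concentration, and you identify the half-space model as the main new difficulty. None of this occurs here: since the data lie in $H^1_0(\Omega)$ and $L^5_{t,x}$ has dimensionality strictly between that of $L^2_x$ and $\dot H^1_x$, scaling is \emph{not} a defect of compactness in the inverse Strichartz inequality, so the profiles carry no scale parameter and the only possible limiting geometries are $\Omega$ itself (centers converging in $\Omega$) and $\R^3$ (centers with $d(x_n)\to\infty$). Moreover, the heat-kernel bound of Lemma~\ref{lm:heat} yields $\inf_n d(x_n)\gtrsim_{\eps,A}1$, so concentration at the boundary is ruled out already at the linear level and no half-space analysis is needed. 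This matters beyond economy: a below-threshold scattering theorem for the focusing cubic NLS on a half-space is not available in the cited literature, so your plan rests on an unproved black box, whereas the actual proof only needs the Euclidean result of \cite{dhr}, embedded into $\Omega$ globally in time via the convergence-of-domains statements of Proposition~\ref{conv}. Your rigidity step is essentially the paper's, except that the conclusion there is a direct contradiction between the linear-in-$T$ growth of the localized virial and its uniform $O(R)$ bound, rather than forcing $u_c\equiv 0$.
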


Observe that our bound in \eqref{E:STB} depends jointly on the mass and energy of the initial data, not simply their product.  A bound depending only on $M(u_0)E(u_0)$  could be obtained (with significant additional complexity) by incorporating rescaling into our arguments in the style of \cite{kvz:quintic};  however, we know of no application that benefits from this stronger assertion.

Due to the convexity of the curve $ME=E(Q)M(Q)$, we may exhaust the region of the mass/energy plane where \eqref{em} holds by sub-level sets of the following one-parameter family of free energies: for each $0<\lambda<\infty$, we define
\begin{align*}
F^\lambda(u_0) :=E(u_0) + \lambda M(u_0).
\end{align*}
To be precise, setting
$$
F^\lambda_*:=2\sqrt{\lambda M(Q)E(Q)}
$$
one easily sees that
\begin{align}\label{Exhaust A}
\{ u_0 : E(u_0)M(u_0)<E(Q)M(Q) \} = \bigcup_{0<\lambda<\infty} \{ u_0 : F^\lambda (u_0)< F^\lambda_* \}
\end{align}

Note that $F^\lambda_*= F^\lambda( Q^\mu )$ where $\mu = \sqrt{\lambda M(Q)/ E(Q)}$ and $Q^\mu$ is the rescaling of $Q$ defined via \eqref{scaling}.  Correspondingly, we note that $F^\lambda_*$ is the free energy of a soliton solution to the cubic NLS in $\R^3$.

In view of \eqref{Exhaust A}, we see that Theorem~\ref{T:main} follows from the following result.

\begin{thm}\label{main_c}
Let $u_0\in H^1_0(\Omega)$ satisfy \eqref{km}.  If
\begin{equation*}
F^\lambda(u_0)=E(u_0)+\lambda M(u_0)<F^\lambda_*
\end{equation*}
for some $0<\lambda<\infty$, then there exists a unique global solution $u$ to \eqref{nls} and
$$
\|u\|_{L_{t,x}^5(\R\times\Omega)}\leq C(\lambda, F^\lambda(u_0)).
$$
\end{thm}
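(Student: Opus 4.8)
The plan is to run the standard concentration-compactness/rigidity argument of Kenig--Merle, adapted to the exterior domain using the tools the authors have already cited---the Strichartz estimates of Ivanovici on convex exterior domains and the equivalence of Sobolev spaces from \cite{kvz:harmonic}. First I would set up the variational framework: using the sharp Gagliardo--Nirenberg inequality on $\R^3$ (applied after extending functions on $\Omega$ by zero, so that $\|\nabla\cdot\|_{L^2(\Omega)}$ controls the nonlinear term exactly as on $\R^3$), I would show that the hypotheses \eqref{km} and $F^\lambda(u_0)<F^\lambda_*$ are preserved by the flow via the conservation laws, and that they yield a coercive lower bound $\|\nabla u(t)\|_{L^2(\Omega)}\|u(t)\|_{L^2(\Omega)}\le (1-\delta)\|\nabla Q\|_2\|Q\|_2$ uniformly in $t$, together with the two-sided equivalence $F^\lambda(u)\sim \|u\|_{H^1_0}^2$ along the trajectory. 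This is what localizes the problem below the ground state and gives the a priori bound on the $H^1_0$-norm, hence (by the blowup criterion in Theorem~\ref{T:lwp}) global existence; what remains is the spacetime bound \eqref{E:STB}.

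Next I would argue by contradiction. Define $L(\lambda, E_0)$ to be the supremum of $\|u\|_{L^5_{t,x}}$ over all solutions with $\|\nabla u_0\|_2\|u_0\|_2<\|\nabla Q\|_2\|Q\|_2$ and $F^\lambda(u_0)\le E_0$; the small-data theory and a stability/perturbation lemma (again built from Strichartz on $\ro$, which I would state and prove in the standard way) show $L$ is finite for small $E_0$ and that if $L(\lambda,E_0)=\infty$ then there is a critical threshold $E_c<F^\lambda_*$ at which it first becomes infinite. Then a linear profile decomposition adapted to $e^{it\Delta_\Omega}$ in $H^1_0(\Omega)$---this is the key input I would need to establish (or quote), in the spirit of profile decompositions for Schr\"odinger propagators on domains---produces a sequence of bubbles; the crux is embedding the Euclidean profiles. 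Because the obstacle is compact, a profile whose spatial scale $N_n^{-1}$ shrinks to zero (and/or whose spatial center $x_n$ either runs to infinity or, after rescaling, sits at a point of $\partial\Omega$ with a half-space limit) must be compared to a solution of $\text{NLS}_{\R^3}$ or of NLS on a half-space; one uses the Euclidean scattering theorem \cite{dhr}, small-data theory, or the half-space result, respectively, to control such bubbles. If more than one profile carries nontrivial $H^{1/2}$-energy, decoupling of mass and energy plus the stability lemma gives a contradiction with criticality; hence there is a single profile, which after passing to the limit furnishes a minimal blowup solution.

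Then I would extract a critical element: a global solution $u_c$ with $F^\lambda(u_c)=E_c<F^\lambda_*$, obeying \eqref{km}, with $\|u_c\|_{L^5_{t,x}(\R\times\Omega)}=\infty$ on at least one half-line, and whose orbit $\{u_c(t)\}$ is precompact in $H^1_0(\Omega)$. Note that on the exterior domain there is no spatial translation symmetry and (after the variational coercivity above) no scaling degree of freedom to track, so the almost-periodicity comes with \emph{no} moving spatial center or scale---this is a genuine simplification over the Euclidean problem and over \cite{kvz:quintic}. Finally, the rigidity step: using a truncated virial/Morawetz identity on $\Omega$ adapted to the Dirichlet boundary---here the strict convexity of the obstacle is essential, since the boundary term in the Pohozaev/virial computation has a favorable sign when $\Omega^c$ is convex (the outward normal derivative contributes with the right sign), exactly as exploited in the defocusing exterior problems---combined with the compactness of the orbit and the coercive lower bound on $\|\nabla u_c\|_2\|u_c\|_2$ away from the soliton threshold, forces $u_c\equiv 0$, a contradiction. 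The main obstacle I anticipate is the profile decomposition and the accompanying embedding of bubbles: rigorously passing between the Dirichlet propagator $e^{it\Delta_\Omega}$, the free propagator $e^{it\Delta}$ on $\R^3$, and the half-space propagator---including verifying the relevant convergence of rescaled domains $\Omega_n\to\R^3$ or $\Omega_n\to$ half-space and transferring Strichartz bounds through those limits---is where essentially all the real work lies; the variational and rigidity steps, by contrast, are close to their Euclidean counterparts once the favorable boundary sign is in hand.
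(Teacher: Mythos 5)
Your outline is the paper's argument: the same variational coercivity (Proposition~\ref{coer}), a contradiction via a critical free-energy threshold, a linear profile decomposition for $e^{it\Delta_\Omega}$ in $H^1_0(\Omega)$, embedding of Euclidean bubbles via the convergence of domains $\Omega-\{x_n\}\to\R^3$ and the scattering theorem of \cite{dhr}, extraction of an almost periodic minimal counterexample, and rigidity by the truncated virial identity in which convexity gives the boundary term $x\cdot\vec n\ge 0$ the favorable sign. You also correctly identify where the real work lies (the profile decomposition and the embedding of nonlinear profiles).

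One point in your plan is off and would derail you if you pursued it: you allow profiles with shrinking spatial scale $N_n^{-1}\to 0$ and, after rescaling, half-space limiting geometries, and you propose to control those bubbles by ``the half-space result.'' No such result exists --- there is no analogue of \cite{dhr} for the focusing cubic NLS on a half-space below the ground state --- so if that case genuinely arose your argument would have an unfillable hole. Fortunately it does not arise: the problem is $H^1$-subcritical, and since $L^5_{t,x}$ has dimensionality strictly between that of $L^2_x$ and $\dot H^1_x$, scaling is \emph{not} a defect of compactness for the Strichartz embedding $H^1_0(\Omega)\hookrightarrow L^5_{t,x}$. The inverse Strichartz inequality (Proposition~\ref{invers}) therefore produces only translation parameters, and a heat-kernel argument (Lemma~\ref{lm:heat}) shows $\inf_n d(x_n)\gtrsim_{\eps,A}1$, so concentration at the boundary is excluded outright; the only limiting geometries are $\Omega$ itself and the whole space $\R^3$. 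The shrinking-scale/half-space taxonomy you describe belongs to the energy-critical quintic problem of \cite{kvz:quintic}, not to this one. (You in fact notice later that the critical element carries no scale, which is inconsistent with your earlier profile taxonomy.) Two smaller omissions: the energy decoupling in the profile decomposition requires decoupling of the $L^4$ norms --- nontrivial here because the nonlinearity is focusing --- which is handled via a weak dispersive estimate (Lemma~\ref{lm:wd}) together with refined Fatou; and the embedding theorem must approximate the Euclidean bubbles \emph{globally in time} with uniform spacetime bounds, which requires matching scattering states for $t$ outside a compact window, not merely comparing propagators on compact time intervals.
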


As alluded above, we believe that \eqref{em} and \eqref{km} represent the sharp threshold in our setting, just as they do for the problem posed in $\R^3$.  More precisely, in \cite{hr:radial} it is shown that radial initial data $u_0\in H^1(\R^3)$ obeying \eqref{em} and
$$
\|\nabla u_0\|_{L^2(\R^3)}\|u_0\|_{L^2(\R^3)} > \|\nabla Q\|_{L^2(\R^3)}\|Q\|_{L^2(\R^3)}
$$
lead to solutions that blow up in finite time.  This is proved via the usual concave virial argument, which does not adapt directly to our setting --- the boundary term does not have a favorable sign.  It is natural to imagine that it should be possible to embed Euclidean blowup solutions into the exterior domain case via perturbative arguments; however, our current understanding of the structure of this blowup is not quite sufficient to push this through.  Nevertheless, we can show that our result is sharp in terms of uniform spacetime bounds:

\begin{prop}\label{P:blowup}
Fix $\lambda\in(0, \infty)$.  There exists a sequence of global solutions $u_n\in C_t H^1_0(\R\times\Omega)$ that satisfy \eqref{km} and
$$
F^\lambda(u_n)\nearrow F^\lambda_* \qtq{and} \|u_n\|_{L_{t,x}^5(\R\times\Omega)}\to \infty \quad\text{as}\quad n\to \infty.
$$
\end{prop}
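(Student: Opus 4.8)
The plan is to build the sequence $u_n$ out of rescaled, truncated, and spatially translated copies of the Euclidean soliton $Q$. Recall that the soliton solution to $\text{NLS}_{\R^3}$ is $e^{it}Q(x)$ (up to the scaling \eqref{scaling}); it lives exactly on the threshold curve $E(u)M(u)=E(Q)M(Q)$ and has infinite $L^5_{t,x}(\R\times\R^3)$ norm. For a fixed $\lambda$, let $\mu=\sqrt{\lambda M(Q)/E(Q)}$ so that $F^\lambda(Q^\mu)=F^\lambda_*$, and let $\phi:=Q^\mu$; the first step is to push a copy of $\phi$ far away from the obstacle, truncate it smoothly so that it satisfies the Dirichlet boundary condition, and then shrink it slightly so that its free energy drops below $F^\lambda_*$ and \eqref{km} holds strictly. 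Concretely, fix $R_n\to\infty$, a point $x_n\in\Omega$ with $\dist(x_n,\Omega^c)\to\infty$, a cutoff $\chi\in C_c^\infty$ equal to $1$ on a large ball, and set the initial data
\begin{align*}
u_{0,n}(x):=(1-\tfrac1n)\,\chi\!\left(\tfrac{x-x_n}{R_n}\right)\phi(x-x_n),
\end{align*}
choosing $R_n$ growing fast enough (relative to $n$) that the truncation and the factor $(1-\tfrac1n)$ together produce a net decrease in $F^\lambda$. Because $\phi\in H^1(\R^3)$ decays exponentially, $\chi(\cdot/R_n)\phi\to\phi$ in $H^1(\R^3)$, so $M(u_{0,n})\to M(\phi)$, $E(u_{0,n})\to E(\phi)$, and $\|\nabla u_{0,n}\|_2\|u_{0,n}\|_2\to\|\nabla\phi\|_2\|\phi\|_2<\|\nabla Q\|_2\|Q\|_2$ (the last inequality is strict because $\phi=Q^\mu$ has $\|\nabla\phi\|_2\|\phi\|_2=\|\nabla Q\|_2\|Q\|_2$ but the extra factor $(1-\tfrac1n)^2<1$ kills equality); together with the $(1-\tfrac1n)$ factor this gives $F^\lambda(u_{0,n})<F^\lambda_*$ and \eqref{km}, so Theorem~\ref{T:main} applies and produces a global solution $u_n\in C_tH^1_0(\R\times\Omega)$. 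Moreover $F^\lambda(u_{0,n})\nearrow F^\lambda_*$, which is the first assertion.

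For the blowup of spacetime norms, the idea is that on a long but finite time interval $[-T,T]$ the solution $u_n$ should closely track the honest Euclidean soliton $e^{it}\phi(\cdot-x_n)$, whose $L^5_{t,x}([-T,T]\times\R^3)$ norm is comparable to $T^{1/5}\|\phi\|_5$ and hence diverges as $T\to\infty$. To make this rigorous I would run a perturbation/stability argument: let $w_n(t,x):=e^{it}u_{0,n}(x)$ (equivalently a suitably extended Euclidean soliton near $x_n$, extended by $0$ into the obstacle), treat $w_n$ as an approximate solution to $\text{NLS}_\Omega$, and bound its Duhamel defect. The defect has two sources — the error made in replacing $\prd$ by $\prr$ near $x_n$, which is controlled by finite speed of propagation / locally-in-time comparison of the two propagators (the obstacle is at distance $\to\infty$, so on $[-T,T]$ this is harmless for $R_n$, $x_n$ chosen suitably relative to $T$), and the error coming from the cutoff and the $(1-\tfrac1n)$ factor, which is small in the relevant Strichartz spaces because $\chi(\cdot/R_n)\phi\to\phi$. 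Choosing the parameters so that $R_n,\dist(x_n,\Omega^c)\to\infty$ much faster than a slowly growing $T_n\to\infty$, the stability theory underlying the local theory of Theorem~\ref{T:lwp} (combined with the Strichartz estimates of \cite{iva:convex} and the Sobolev equivalence of Theorem~\ref{T:equiv}) gives $\|u_n-w_n\|_{L^5_{t,x}([-T_n,T_n]\times\Omega)}\to 0$, whence
\begin{align*}
\|u_n\|_{L^5_{t,x}(\R\times\Omega)}\ \ge\ \|u_n\|_{L^5_{t,x}([-T_n,T_n]\times\Omega)}\ \gtrsim\ T_n^{1/5}\|\phi\|_5-o(1)\ \longrightarrow\ \infty.
\end{align*}

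The main obstacle is the propagator comparison: one needs that $\prd f$ and $\prr f$ agree up to an error that is small in $L^5_{t,x}([-T,T]\times\Omega)$ (and in the relevant $\dot H^1$-type Strichartz norms) when $f$ is localized at distance $d_n\to\infty$ from $\Omega^c$, for $T$ growing slowly relative to $d_n$. This is morally finite speed of propagation for Schrödinger — which is false in the strict sense — so it must be implemented quantitatively: either via the local smoothing / Strichartz estimates for $\propagateomega$ from \cite{iva:convex} applied to the difference (which solves an equation forced only near the boundary), or via a parametrix comparison exploiting that $Q^\mu$ is Schwartz-tailed so its far-field contribution near $\Omega^c$ is superpolynomially small. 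A clean route is to note that $v_n:=(\prd-\prr)u_{0,n}$ solves $(i\partial_t+\ld)v_n=0$ with $v_n(0)$ supported away from $\Omega^c$ but failing the boundary condition only through the Euclidean evolution's slow leakage; bounding $\|v_n\|_{L^5_{t,x}([-T_n,T_n]\times\Omega)}$ by Strichartz and the smallness of $e^{it\Delta}u_{0,n}$ restricted to a neighborhood of $\Omega^c$ on $[-T_n,T_n]$ — which is $O(d_n^{-\infty}\langle T_n\rangle^{\infty})$ by stationary phase — closes the estimate once $d_n$ is taken large enough depending on $n$ and $T_n$. Everything else is a routine application of the perturbation lemma and the conservation laws.
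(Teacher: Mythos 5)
Your construction coincides with the paper's: the initial data is a slightly shrunk, smoothly truncated translate of the (rescaled) ground state pushed far from the obstacle; the sub-threshold conditions \eqref{km} and $F^\lambda<F^\lambda_*$ follow from the Pohozaev identities (the paper records the expansion $F^{1/2}((1-\eps)Q)=[1-4\eps^2+O(\eps^3)]\,F^{1/2}_*$, which is the quantitative form of your ``net decrease'' claim); and the approximate solution is $e^{it}$ times that profile, fed into the stability lemma on long time intervals. The one substantive remark is that the ``main obstacle'' you identify --- comparing $e^{it\Delta_\Omega}$ with $e^{it\Delta_{\R^3}}$ via finite speed of propagation or stationary phase --- does not arise at all. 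Since your approximate solution $w_n(t,x)=e^{it}u_{0,n}(x)$ is an explicit phase rotation of a fixed profile lying in $H^1_0(\Omega)$, its defect is the purely spatial quantity $e^{it}\bigl[\Delta u_{0,n}-u_{0,n}+|u_{0,n}|^2u_{0,n}\bigr]$, and no linear propagator (Euclidean or Dirichlet) enters anywhere. This defect consists only of the cutoff commutator terms (small because $Q$ decays and the cutoff region is far from the soliton's center) and the term proportional to $\eps_n Q^3$ produced by the shrinking factor (small because $\eps_n\to0$); this is exactly your ``second source'' of error, and it is the whole story. Finally, the paper fixes $T$ and sends $n\to\infty$ for each $T$ rather than running a diagonal $T_n\to\infty$; your diagonal version also works, but it requires the extra bookkeeping that the stability threshold $\eps_1(\mathcal E,L)$ degrades as $L\sim T_n^{1/5}$ grows, so $\eps_n$, $R_n$, and $d(x_n)$ must be slaved to $T_n$.
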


This is proved in Section~\ref{S:blowup} by choosing a sequence $u_n$ that closely models $Q^\mu$ in shape, but is centered far from the obstacle.

\subsection*{Acknowledgements} R. K. was supported by NSF grant DMS-1265868. M. V. was supported by the Sloan Foundation and NSF grants DMS-0901166 and DMS-1161396. X. Z. was supported by the Simons Foundation.

\section{Preliminaries}

\subsection{Notation and useful lemmas}
We write $X \lesssim Y$ or $Y \gtrsim X$ to indicate $X \leq CY$ for some constant $C>0$. We use the notation $X \sim Y$ whenever $X \lesssim Y \lesssim X$.

Throughout this paper, $\Omega$ denotes the exterior domain of a compact, smooth, strictly convex obstacle in $\R^3$. Without loss of generality, we assume that $0\in \Omega^c$ and
$\Omega^c\subset B(0, 1)$.  For any $x\in \R^3$ we use $d(x):=\dist(x,\Omega^c)$.

With $x_0\in \R^3$, we use $\tau_{x_0}$ to denote the translation operator $\tau_{x_0} f(x):=f(x-x_0)$.

Throughout this paper, $\chi$ will denote a smooth cutoff in $\R^3$ satisfying
\begin{align}\label{chi defn}
\chi(x)=
\begin{cases}
1, &\text{if }\ |x|\le \tfrac14\\
0,&\text{if }\ |x|>\tfrac12.
\end{cases}
\end{align}

We will use the following refined version of Fatou's lemma due to Brezis and Lieb.

\begin{lem}[Refined Fatou, \cite{BrezisLieb}]\label{lm:rf}
Let $0<p<\infty$ and assume that  $\{f_n\}\subseteq L^p(\R^d)$ with $\limsup_{n\to \infty}\|f_n\|_p<\infty$. If $f_n\to f$ almost everywhere, then
\begin{align*}
\int_{\R^d}\biggl||f_n|^p-|f_n-f|^p-|f|^p \biggr| \,dx\to 0 \qtq{as} n\to \infty.
\end{align*}
In particular, $\|f_n\|_p^p-\|f_n-f\|_p^p \to \|f\|_p^p. $
\end{lem}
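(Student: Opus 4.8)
The plan is to follow the classical argument of Brezis and Lieb \cite{BrezisLieb}, whose engine is a pointwise splitting of $|a+b|^p$ combined with dominated convergence. The first step is to record the elementary inequality: for every $\eps>0$ there exists $C_\eps>0$ such that
\[
\bigl||a+b|^p-|a|^p\bigr|\le \eps\,|a|^p+C_\eps\,|b|^p \qtq{for all} a,b\in\C .
\]
When $0<p<1$ this is immediate from subadditivity of $t\mapsto|t|^p$ (one may even take $C_\eps=1$). When $p\ge 1$ one splits according to whether $|a|\le R|b|$ or $|a|>R|b|$ for a large parameter $R=R(\eps)$: in the first regime the left-hand side is $\lesssim_{R,p}|b|^p$ by the triangle inequality, while in the second regime the mean value theorem gives $\bigl||a+b|^p-|a|^p\bigr|\le p(|a|+|b|)^{p-1}|b|\le \tfrac{p(1+1/R)^{p-1}}{R}|a|^p$, and one chooses $R$ so that the resulting constant is at most $\eps$.

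Applying this inequality with $a=f_n-f$ and $b=f$ yields
\[
\bigl||f_n|^p-|f_n-f|^p-|f|^p\bigr|\le \eps\,|f_n-f|^p+(C_\eps+1)\,|f|^p .
\]
I would then set
\[
G_n^\eps:=\Bigl(\,\bigl||f_n|^p-|f_n-f|^p-|f|^p\bigr|-\eps\,|f_n-f|^p\,\Bigr)_+ ,
\]
so that $0\le G_n^\eps\le (C_\eps+1)|f|^p$. Since $f_n\to f$ a.e. and $t\mapsto|t|^p$ is continuous, $G_n^\eps\to 0$ a.e.; moreover the ordinary Fatou lemma gives $\|f\|_p\le\liminf_{n\to\infty}\|f_n\|_p<\infty$, so $|f|^p\in L^1(\R^d)$ is a legitimate dominating function. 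The dominated convergence theorem then shows that, for each fixed $\eps>0$, $\int_{\R^d}G_n^\eps\,dx\to 0$ as $n\to\infty$.

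To conclude, note that $\bigl||f_n|^p-|f_n-f|^p-|f|^p\bigr|\le G_n^\eps+\eps\,|f_n-f|^p$ and that $\|f_n-f\|_p^p$ is bounded uniformly in $n$ by some $M<\infty$ (using $\|f_n-f\|_p^p\lesssim_p\|f_n\|_p^p+\|f\|_p^p$ together with the two finiteness facts just mentioned). Integrating over $\R^d$ and letting $n\to\infty$ gives $\limsup_{n\to\infty}\int_{\R^d}\bigl||f_n|^p-|f_n-f|^p-|f|^p\bigr|\,dx\le \eps M$; letting $\eps\downarrow0$ finishes the first assertion, and the ``in particular'' clause follows from $\bigl|\|f_n\|_p^p-\|f_n-f\|_p^p-\|f\|_p^p\bigr|\le\int_{\R^d}\bigl||f_n|^p-|f_n-f|^p-|f|^p\bigr|\,dx$. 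The only genuinely delicate points are the proof of the elementary inequality and the observation (via Fatou) that $|f|^p$ is integrable, so that dominated convergence is applicable; once these are in place the remainder is routine.
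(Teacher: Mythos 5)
Your proof is correct and is precisely the classical Brezis--Lieb argument; the paper itself offers no proof of this lemma, simply citing \cite{BrezisLieb}, and your write-up faithfully reproduces the argument from that reference (the $\eps|a|^p+C_\eps|b|^p$ splitting inequality, the truncated sequence $G_n^\eps$ dominated by $(C_\eps+1)|f|^p$, and the final $\eps\downarrow 0$ limit). The two delicate points you flag --- the elementary inequality and the integrability of $|f|^p$ via Fatou --- are handled correctly, so nothing further is needed.
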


We will use the following heat kernel estimate due to Q. S. Zhang.

\begin{lem}[Heat kernel estimate, \cite{Zhang:heat}]\label{lm:heat}
Let $\Omega$ denote the exterior of a smooth, compact, convex obstacle in $\R^d$ for $d\geq 3$.  Then there exists $c>0$ such that
\begin{align*}
|e^{t\ld}(x,y)|\lsm \Bigl(\tfrac{d(x)}{\sqrt t\wedge \diam}\wedge 1\Bigr)\Bigl(\tfrac{d(y)}{\sqrt t\wedge \diam}\wedge 1\Bigr) e^{-\frac{c|x-y|^2}t} t^{-\frac d 2},
\end{align*}
uniformly for $x, y\in \Omega$.  If either $x\notin \Omega$ or $y\notin \Omega$, then $e^{it\ld}(x,y)=0$.
\end{lem}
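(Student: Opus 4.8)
The vanishing of $e^{t\ld}(x,y)$ when $x\notin\Omega$ or $y\notin\Omega$ is immediate: $\ld$ is self-adjoint on $L^2(\Omega)$, so its heat kernel is supported on $\Omega\times\Omega$, and because $\partial\Omega$ is smooth (hence every boundary point is regular for the Dirichlet problem) the kernel extends continuously by zero across $\partial\Omega$. For the quantitative bound, I would first invoke the parabolic maximum principle — equivalently, domain monotonicity of Dirichlet heat kernels, or the Trotter product formula — to get $0\le e^{t\ld}(x,y)\le e^{t\Delta}(x,y)=(4\pi t)^{-d/2}e^{-|x-y|^2/4t}$ for all $x,y\in\Omega$, $t>0$. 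Since both boundary factors in the statement are $\le 1$, this already proves the estimate whenever $d(x)\wedge d(y)$ is bounded below by a fixed multiple of $\sqrt t\wedge\diam$, and it also supplies the pointwise input for the boundary-regularity step.

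Next I would extract a one-sided boundary factor. Fix $y\in\Omega$ and put $u(s,z):=e^{s\ld}(z,y)$, a nonnegative solution of the heat equation on $(0,\infty)\times\Omega$ that vanishes on $(0,\infty)\times\partial\Omega$; by the previous step $u(s,z)\le Cs^{-d/2}e^{-c|z-y|^2/s}$. Since $\partial\Omega$ is smooth and compact, the obstacle obeys a uniform interior ball condition at some scale $\rho_0>0$ (and, being convex, a trivial exterior ball condition), so the standard local parabolic boundary-regularity estimate holds with a constant uniform over $\partial\Omega$ at every scale $r\le\rho_0$: a nonnegative solution of the heat equation that vanishes on the portion of $\partial\Omega$ inside a parabolic cylinder of radius $r$ at a boundary point is at most $C(d(x)/r)$ times its supremum over the concentric cylinder of radius $r/2$, for $x$ within $r/2$ of that boundary point. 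Applying this at scale $r\sim\sqrt t\wedge\rho_0$ around the boundary point nearest to $x$ (the complementary case $d(x)\gtrsim r$ being covered already by the first step) and bounding the relevant supremum by the Gaussian above — using $|x-z|\ge\tfrac12|x-y|$ away from the diagonal and the crude bound $\sup u\lsm t^{-d/2}$ near it — gives, for all $t>0$,
\begin{equation*}
e^{t\ld}(x,y)\ \lsm\ \Bigl(\tfrac{d(x)}{\sqrt t\wedge\rho_0}\wedge 1\Bigr)\,t^{-d/2}e^{-c|x-y|^2/t}.
\end{equation*}
Because $\rho_0$ and $\diam$ are fixed positive constants, $\sqrt t\wedge\rho_0\sim\sqrt t\wedge\diam$ for all $t>0$, so this is precisely the one-sided form of the claim.

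Finally I would symmetrise using the semigroup law $e^{t\ld}(x,y)=\int_\Omega e^{\frac t2\ld}(x,z)\,e^{\frac t2\ld}(z,y)\,dz$: apply the one-sided bound to the first factor (boundary factor in $x$), apply it to the second factor via $e^{\frac t2\ld}(z,y)=e^{\frac t2\ld}(y,z)$ (boundary factor in $y$), and perform the Gaussian convolution $\int_{\R^d}e^{-c|x-z|^2/t}e^{-c|z-y|^2/t}\,dz\lsm t^{d/2}e^{-c'|x-y|^2/t}$ (which shrinks the exponential constant but keeps it positive). This yields
\begin{equation*}
e^{t\ld}(x,y)\ \lsm\ \Bigl(\tfrac{d(x)}{\sqrt t\wedge\diam}\wedge 1\Bigr)\Bigl(\tfrac{d(y)}{\sqrt t\wedge\diam}\wedge 1\Bigr)\,t^{-d/2}e^{-c|x-y|^2/t},
\end{equation*}
as desired. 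The main obstacle is the middle step: one must apply the local parabolic boundary-regularity estimate at the scale $\sqrt t\wedge\rho_0$ uniformly along the smooth compact boundary — this is where the geometry of the obstacle is used, through the uniform interior ball condition — and take care that the Gaussian factor survives the localisation intact. Everything else — the whole-space domination, the single semigroup identity, and the Gaussian convolution — is routine.
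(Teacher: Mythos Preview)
The paper does not prove this lemma at all; it is quoted as a black box from \cite{Zhang:heat}, so there is no ``paper's own proof'' to compare against.  Your outline is a standard and essentially correct route to such an estimate: domain monotonicity gives the Gaussian envelope, a parabolic boundary-regularity (barrier) argument at scale $\sqrt t\wedge\rho_0$ extracts one boundary factor, and the semigroup identity plus a Gaussian convolution symmetrises.  One terminological point worth tightening: for the \emph{upper} bound barrier you need a ball inside the obstacle $\Omega^c$ tangent to $\partial\Omega$ (an interior ball for $\Omega^c$, equivalently an exterior ball for $\Omega$), and it is this that lets you compare with the explicit half-space or ball-exterior heat kernel to get the linear factor $d(x)/r$; you have this attribution right, but be careful when you actually write it out, since it is easy to confuse which side the ball sits on.  The only place requiring real care, as you note, is keeping the Gaussian tail intact when passing through the local boundary estimate; your dichotomy $|x-y|\gtrless r$ handles this correctly.
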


There is a natural family of Sobolev spaces associated to powers of the Dirichlet Laplacian.  Our notation for these is as follows:

\begin{defn}
For $s\geq 0$ and $1<p<\infty$, let $\dot H_D^{s,p}(\Omega)$ and $H_D^{s,p}(\Omega)$ denote the completions of $C^\infty_c(\Omega)$ under the norms
$$
\| f \|_{\dot H_D^{s,p}(\Omega)} := \| (-\Delta_\Omega)^{s/2} f \|_{L^p} \qtq{and} \| f \|_{H_D^{s,p}(\Omega)} := \| (1-\Delta_\Omega)^{s/2} f \|_{L^p}.
$$
When $p=2$ we write $\dot H^s_D(\Omega)$ and $H^s_D(\Omega)$ for $\dot H^{s,2}_D(\Omega)$ and $H^{s,2}_D(\Omega)$, respectively.
\end{defn}

When $\Omega$ is replaced by $\R^3$, these definitions lead to the classical $\dot H^{s,p}(\R^3)$ and $H^{s,p}(\R^3)$ families of spaces.  Note that the classical $H^{s,p}_0(\Omega)$ spaces are defined as subspaces of $H^{s,p}(\Omega)$, which are in turn defined as quotients of the spaces $H^{s,p}(\R^3)$.  Thus the spaces $H^{s,p}_0(\Omega)$ have no direct connection to the functional calculus of the Dirichlet Laplacian.  It is a well-known (but nontrivial) theorem that $H^{s,p}_0(\Omega) = H^{s,p}_D(\Omega)$ for $0<s<\smash[t]{\frac{1}{p}}$ and again for $\smash[t]{\frac{1}{p}}<s<1+\smash[t]{\frac{1}{p}}$.

The inter-relation of \emph{homogeneous} Sobolev spaces on non-compact manifolds throws up additional complications.  The case $s=1$ captures already the question of boundedness of Riesz transforms, which is a topic of on-going investigation.  Motivated by applications to NLS, in \cite{kvz:harmonic} we investigated when the two notions of Sobolev spaces  are equivalent in the case of exterior domains.  The advantage of such an equivalence is two fold:   Powers of the Dirichlet Laplacian commute with the linear evolution $e^{it\Delta_\Omega}$, while fractional product and chain rules (needed for treating the nonlinearity) have already been proved for spaces defined via powers of the Euclidean Laplacian.  Our findings in \cite{kvz:harmonic} are summarized in the following sharp result about the equivalence of Sobolev spaces.

\begin{thm}[Equivalence of Sobolev spaces, \cite{kvz:harmonic}]\label{T:equiv}
Let $d\geq 3$ and let $\Omega$ be the complement of a compact convex body $\Omega^c\subset\R^d$ with smooth boundary.  Let $1<p<\infty$.  If $0\leq s<\min\{1+\frac1p,\frac dp\}$ then
\begin{equation*}
\bigl\| (-\Delta_{\R^d})^{s/2} f \bigl\|_{L^p}  \sim_{d,p,s} \bigl\| (-\Delta_\Omega)^{s/2} f \bigr\|_{L^p}  \qtq{for all} f\in C^\infty_c(\Omega).
\end{equation*}
\end{thm}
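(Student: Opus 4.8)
The plan is to establish, for every $f\in C^\infty_c(\Omega)$ (understood to be extended by zero whenever a Euclidean operator acts on it), the two bounds
$$\bigl\|(-\ld)^{s/2}f\bigr\|_{\lpo}\lesssim\bigl\|(-\Delta_{\R^d})^{s/2}f\bigr\|_{\lpr}\qquad\text{and}\qquad\bigl\|(-\Delta_{\R^d})^{s/2}f\bigr\|_{\lpr}\lesssim\bigl\|(-\ld)^{s/2}f\bigr\|_{\lpo}.$$
Both follow from controlling the difference operator $D_sf:=(-\Delta_{\R^d})^{s/2}f-(-\ld)^{s/2}f$: since $\|(-\ld)^{s/2}f\|_p\le\|(-\Delta_{\R^d})^{s/2}f\|_p+\|D_sf\|_p$ and symmetrically, it suffices to prove
$$\|D_sf\|_p\lesssim\|(-\Delta_{\R^d})^{s/2}f\|_p\qquad\text{and}\qquad\|D_sf\|_p\lesssim\|(-\ld)^{s/2}f\|_p.$$
I would first treat $0\le s\le1$ and then reduce $1<s<1+\tfrac1p$ to that range.

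For $0\le s\le1$ the engine is the subordination identity $(-\Delta)^{s/2}g=c_s\int_0^\infty(g-e^{t\Delta}g)\,t^{-1-s/2}\,dt$ (with $c_s>0$, valid for $0<s<2$), which yields $D_sf=c_s\int_0^\infty(e^{t\ld}-e^{t\Delta_{\R^d}})f\,t^{-1-s/2}\,dt$; hence $D_s$ has an integral kernel $K_s$ with
$$|K_s(x,y)|=c_s\int_0^\infty w(t,x,y)\,t^{-1-s/2}\,dt,\qquad w(t,x,y):=e^{t\Delta_{\R^d}}(x,y)-e^{t\ld}(x,y).$$
By the parabolic maximum principle $0\le w(t,x,y)\le e^{t\Delta_{\R^d}}(x,y)$, so $w$ always enjoys the full Gaussian bound; the essential extra input — extracted from Lemma~\ref{lm:heat}, its companion lower bounds, and a boundary-layer argument (heat released at $y$ feels the obstacle only after time $\sim d(y)^2$) — is that, for a suitable choice of $\theta>0$,
$$0\le w(t,x,y)\lesssim_\theta\Bigl(\tfrac{\sqrt t}{d(x)}\wedge1\Bigr)^{\!\theta}\Bigl(\tfrac{\sqrt t}{d(y)}\wedge1\Bigr)^{\!\theta}\,t^{-d/2}e^{-c|x-y|^2/t}.$$
Integrating in $t$ and splitting at $t\sim\min\{d(x)^2,d(y)^2,|x-y|^2\}$ shows that \emph{away from the obstacle} $K_s$ is only mildly singular — near the diagonal $\lesssim|x-y|^{-\gamma}$ for some $0<\gamma<d$, with fast off-diagonal decay — so that part of $D_s$ is bounded on $L^p$ for free. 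All the difficulty is concentrated in the boundary layer $\{d(x)\lesssim|x-y|\}\cup\{d(y)\lesssim|x-y|\}$, where $K_s$ is as singular as the kernel of $(-\Delta_{\R^d})^{s/2}$ itself.

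In that boundary layer one does not use the kernel directly; instead one trades the $s$ orders of (fractional) differentiation for the weight $d(x)^{-s}$, showing via a weighted Schur / Hardy–Littlewood–Sobolev estimate that the boundary-layer contribution to $\|D_sf\|_p$ is $\lesssim\|d^{-s}f\|_{\lpo}$ up to harmless lower-order terms. The two required bounds on $\|D_sf\|_p$ then follow from the sharp Hardy inequalities $\|d^{-s}f\|_p\lesssim\||\nabla|^sf\|_p=\|(-\Delta_{\R^d})^{s/2}f\|_p$ and $\|d^{-s}f\|_p\lesssim\|(-\ld)^{s/2}f\|_p$, and these are exactly what force the hypothesis $s<\min\{1+\tfrac1p,\tfrac dp\}$: the condition $sp<d$ governs their validity at spatial infinity (where $d(x)\sim|x|$ and they reduce to the classical Hardy/Stein–Weiss inequality), while near $\partial\Omega$ the naive threshold $sp<1$ is relaxed to $sp<p+1$ precisely because $f$, extended by zero, still lies in the corresponding fractional Sobolev space over $\R^d$ (equivalently $H^{s,p}_D(\Omega)=H^{s,p}_0(\Omega)$ in that range), supplying the missing order of vanishing. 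Finally, $1<s<1+\tfrac1p$ is reduced to $0\le s\le1$ by factoring out one full derivative: write $(-\ld)^{s/2}f=(-\ld)^{(s-1)/2}(-\ld)^{1/2}f$, use the $s=1$ case to pass to $\nabla f$, and apply the already-established $(s-1)$-case (extended from $C^\infty_c(\Omega)$ to $H^{s-1,p}_D(\Omega)$ by density) together with one more heat-kernel-difference estimate.

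The main obstacle is everything near the boundary. Concretely: (1) extracting from Lemma~\ref{lm:heat} a bound on $w$ that is genuinely small when $\sqrt t$ is much smaller than $d(x)$ or $d(y)$ — this needs more than inserting the Gaussian bounds, since the free and Dirichlet heat kernels are not known to agree to high order; and (2) matching the resulting weighted singular integral to the Hardy inequality \emph{exactly up to, but not including}, the endpoints $s=\tfrac1p$ and $s=1+\tfrac1p$. The second point — in particular establishing the Hardy inequality in the full range $\tfrac1p\le s<1+\tfrac1p$ by exploiting that functions in $C^\infty_c(\Omega)$ lie in $H^{s,p}_0$ — is where the sharpness of the theorem is decided and is the most delicate ingredient.
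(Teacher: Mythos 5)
First, a point of orientation: the paper does not prove Theorem~\ref{T:equiv} at all --- it is imported verbatim from \cite{kvz:harmonic}, so there is no in-paper argument to compare against. Judged on its own terms, your roadmap correctly identifies the main analytic inputs of the known proof (the Dirichlet heat-kernel bound with the $\bigl(\tfrac{d(x)}{\sqrt t}\wedge 1\bigr)$ vanishing factors, Hardy-type inequalities with weight $d(x)^{-s}$, the constraint $s<d/p$ coming from spatial infinity and $s<1+\tfrac1p$ from the boundary). The proof in \cite{kvz:harmonic} is packaged differently --- via Littlewood--Paley projections $P_N^\Omega$, Mikhlin multiplier theorems for $-\Delta_\Omega$, and square-function estimates, rather than subordination of the fractional power to the heat semigroup --- but that is largely a matter of organization.

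There is, however, a genuine logical gap, and it sits exactly where the sharpness of the theorem is decided. Your scheme rests on the two Hardy inequalities $\|d^{-s}f\|_{L^p}\lesssim\|(-\Delta_{\R^d})^{s/2}f\|_{L^p}$ and $\|d^{-s}f\|_{L^p}\lesssim\|(-\Delta_\Omega)^{s/2}f\|_{L^p}$, treated as parallel known facts whose common range of validity is $s<\min\{1+\tfrac1p,\tfrac dp\}$. This is not right. For $f\in C^\infty_c(\Omega)$ --- which vanishes to infinite order near $\partial\Omega$ --- the \emph{Euclidean} Hardy inequality suffers no boundary obstruction at all: each additional unit of vanishing buys another unit of $s$, so near $\partial\Omega$ it holds (with uniform constants) far beyond $s=1+\tfrac1p$. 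Consequently, if your boundary-layer bound $\|D_s^{\mathrm{bdry}}f\|_p\lesssim\|d^{-s}f\|_p$ and the Euclidean Hardy inequality were the whole story, the argument would prove the equivalence for every $s<\tfrac dp$, contradicting the known failure of the theorem at $s=1+\tfrac1p$. The threshold $s<1+\tfrac1p$ is therefore enforced entirely by the \emph{Dirichlet-side} estimate $\|d^{-s}f\|_p\lesssim\|(-\Delta_\Omega)^{s/2}f\|_p$ (equivalently, $L^p$-boundedness of $d(x)^{-s}(-\Delta_\Omega)^{-s/2}$), which is not classical: it requires precisely the kernel bounds for the Dirichlet functional calculus --- whose spectral pieces vanish only to first order at $\partial\Omega$ --- that the theorem is about. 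Invoking it as a known tool is circular.

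Two further steps would not close as written. In the boundary layer the kernel of $D_s$ satisfies $|K_s(x,y)|\lesssim(d(x)+d(y)+|x-y|)^{-d-s}$, and a plain Schur test of $K_s(x,y)d(y)^s$ produces a logarithmic divergence $\log\tfrac1{d(x)}$ in one of the two Schur integrals; one needs a genuinely weighted (or interpolated) argument, and the admissible weight exponents are exactly where the $\tfrac1p$-shifts enter. And the reduction of $1<s<1+\tfrac1p$ to $s\le 1$ by writing $(-\Delta_\Omega)^{s/2}f=(-\Delta_\Omega)^{(s-1)/2}(-\Delta_\Omega)^{1/2}f$ applies the lower-order case to $(-\Delta_\Omega)^{1/2}f$, which is no longer in $C^\infty_c(\Omega)$ and need not vanish at $\partial\Omega$ to the order the lower-order case exploits; justifying this extension again requires the boundary behavior of the Dirichlet calculus. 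In short: the skeleton is right, but the two ingredients you defer --- the Dirichlet-side Hardy inequality in the range $\tfrac1p\le s<1+\tfrac1p$ and the weighted boundary-layer estimate --- carry essentially all of the content of the theorem.
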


Theorem~\ref{T:equiv} allows us to transfer directly several key results from the Euclidean setting to exterior domains.  One example is the $L^p$-Leibnitz (or product) rule for first derivatives:

\begin{cor}[Fractional product rule]\label{C:product rule}
For all $f, g\in C_c^{\infty}(\Omega)$, we have
\begin{align*}
\| (-\Delta_\Omega)^{\frac 12}(fg)\|_{L^p(\Omega)} \lesssim \| (-\Delta_\Omega)^{\frac 12} f\|_{L^{p_1}(\Omega)}\|g\|_{L^{p_2}(\Omega)}+
\|f\|_{L^{q_1}(\Omega)}\| (-\Delta_\Omega)^{\frac 12} g\|_{L^{q_2}(\Omega)}
\end{align*}
with the exponents satisfying $1<p, p_1, q_2<\infty$, $1<p_2,q_1\le \infty$, and $\frac1p=\frac1{p_1}+\frac1{p_2}=\frac1{q_1}+\frac1{q_2}$.
\end{cor}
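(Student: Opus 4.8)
The plan is to reduce the statement to the classical Euclidean fractional product (Leibnitz) rule for first derivatives via Theorem~\ref{T:equiv}. First I would observe that for $f,g\in C_c^\infty(\Omega)$, the product $fg$ also lies in $C_c^\infty(\Omega)$, so that Theorem~\ref{T:equiv} applies to $fg$, to $f$, and to $g$ with $s=1$. The hypothesis $1<p<\infty$ is exactly what is needed to invoke the equivalence with $s=1$, since $1<\min\{1+\frac1p,\frac dp\}$ holds automatically when $d\geq 3$ (indeed $\frac dp>1$ is equivalent to $p<d$, but even when $p\geq d$ one has $1+\frac1p>1$, so the range $s=1$ is always admissible). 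Applying the equivalence to the left-hand side gives
\begin{align*}
\|(-\Delta_\Omega)^{1/2}(fg)\|_{L^p(\Omega)}\sim \|(-\Delta_{\R^3})^{1/2}(fg)\|_{L^p(\R^3)},
\end{align*}
where $fg$ is understood to be extended by zero outside $\Omega$; this extension is legitimate because $fg\in C_c^\infty(\Omega)$.

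Next I would apply the classical Euclidean fractional product rule (the $L^p$-Leibnitz rule for $|\nabla|$, available for the stated range of exponents on $\R^d$; see, e.g., the references in \cite{kvz:harmonic} or standard harmonic analysis texts) to the right-hand side:
\begin{align*}
\|(-\Delta_{\R^3})^{1/2}(fg)\|_{L^p(\R^3)}\lesssim \|(-\Delta_{\R^3})^{1/2}f\|_{L^{p_1}(\R^3)}\|g\|_{L^{p_2}(\R^3)}+\|f\|_{L^{q_1}(\R^3)}\|(-\Delta_{\R^3})^{1/2}g\|_{L^{q_2}(\R^3)},
\end{align*}
valid for $1<p,p_1,q_2<\infty$, $1<p_2,q_1\le\infty$, with $\frac1p=\frac1{p_1}+\frac1{p_2}=\frac1{q_1}+\frac1{q_2}$. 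Then I would convert each Euclidean factor on the right back into its Dirichlet-Laplacian counterpart: for the factors involving $(-\Delta_{\R^3})^{1/2}f$ and $(-\Delta_{\R^3})^{1/2}g$ I again invoke Theorem~\ref{T:equiv} with $s=1$ (using $1<p_1<\infty$ and $1<q_2<\infty$ respectively), while the plain $L^r$ norms of $f$ and $g$ need no conversion since $\|h\|_{L^r(\R^3)}=\|h\|_{L^r(\Omega)}$ for $h\in C_c^\infty(\Omega)$ extended by zero. Chaining these comparisons yields the claimed inequality.

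The only mild subtlety — and the step I would treat most carefully — is the endpoint cases $p_2=\infty$ or $q_1=\infty$: there one cannot use Sobolev-space equivalence for those factors (and indeed does not need to, as they appear as bare $L^\infty$ norms), so one must simply note that the Euclidean product rule already tolerates these endpoints and that the $L^\infty$ norm is unaffected by the zero extension. A second point worth a line is that Theorem~\ref{T:equiv} is stated for functions in $C_c^\infty(\Omega)$, which is precisely the class in which $f$, $g$, and $fg$ all live, so no density argument is required at this stage; the corollary is then extended to larger function classes by density when it is applied later. Beyond these bookkeeping remarks, the argument is a direct transplant of the Euclidean result and presents no real obstacle.
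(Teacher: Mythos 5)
Your overall strategy --- transfer the inequality to $\R^3$ via Theorem~\ref{T:equiv}, apply the Euclidean fractional Leibniz rule there, and transfer back --- is exactly what the paper intends; no separate proof is given, the corollary being presented as a direct consequence of the equivalence of Sobolev spaces. The bookkeeping points you raise (that $fg\in C_c^\infty(\Omega)$, the extension by zero, and the $L^\infty$ endpoints for the undifferentiated factors) are all handled correctly.

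However, your verification of the hypotheses of Theorem~\ref{T:equiv} contains a genuine error. The theorem requires $s<\min\bigl\{1+\tfrac1p,\tfrac dp\bigr\}$, i.e.\ \emph{both} $s<1+\tfrac1p$ \emph{and} $s<\tfrac dp$ must hold; you read the minimum as though satisfying one of the two conditions suffices (``even when $p\geq d$ one has $1+\frac1p>1$, so the range $s=1$ is always admissible''). With $d=3$ and $s=1$ the theorem applies only for $1<p<3$, and since the paper advertises this range as sharp, the two-sided equivalence genuinely fails for $p\geq 3$. As written, your argument therefore establishes the corollary only when $p$, $p_1$, and $q_2$ all lie in $(1,3)$. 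That restricted version covers every application in the paper (the exponents that occur are $\tfrac{30}{23}$ and $\tfrac{30}{11}$), but it does not give the statement for the full claimed range $1<p,p_1,q_2<\infty$; to reach larger exponents one must observe that only one direction of the comparison is needed on each side of the inequality (the bound of $(-\Delta_\Omega)^{1/2}$ by $(-\Delta_{\R^3})^{1/2}$ at exponent $p$, and the reverse bound at exponents $p_1,q_2$) and invoke the corresponding one-sided estimates from \cite{kvz:harmonic}, which hold beyond the range of the two-sided equivalence. You should either restrict the exponents accordingly or supply this additional input.
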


The following simple lemma will be frequently used in this paper.

\begin{lem}\label{lm:often} Let $\phi\in H^1(\R^3)$, $\chi$ be as in \eqref{chi defn}, and $R_n\to \infty$. Then
\begin{align*}
&\bigl\|\chi\bigl(\tfrac{x}{R_n}\bigr)\phi\bigr\|_{H^1(\R^3)}\lsm \|\phi\|_{\hr},\\
&\lim_{n\to \infty}\bigl\|\bigl[1-\chi\bigl(\tfrac{x}{R_n}\bigr)\bigr]\phi\bigr\|_{\hr}=0.
\end{align*}
\end{lem}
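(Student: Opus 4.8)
The plan is to obtain both bounds from the Leibniz rule for the gradient, the uniform estimate $\sup_n\bigl(\|\chi(\cdot/R_n)\|_{L^\infty}+\|\nabla[\chi(\cdot/R_n)]\|_{L^\infty}\bigr)\lesssim 1$, and the dominated convergence theorem. Abbreviate $\chi_n(x):=\chi(x/R_n)$, so that $0\le\chi_n\le 1$ and $\|\nabla\chi_n\|_{L^\infty}=R_n^{-1}\|\nabla\chi\|_{L^\infty}$; since $R_n\to\infty$ (with $R_n>0$) we have $\inf_n R_n>0$, so this last quantity is bounded uniformly in $n$. Also $\chi_n\equiv1$ on $\{|x|\le R_n/4\}$ and $\chi_n\equiv0$ on $\{|x|>R_n/2\}$, so for each fixed $x$ one has $\chi_n(x)\to1$ and $1-\chi_n(x)\to0$ as $n\to\infty$.

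For the first bound I would expand $\nabla(\chi_n\phi)=\chi_n\nabla\phi+\phi\,\nabla\chi_n$ and estimate
$$
\|\chi_n\phi\|_{\hr}\le\|\phi\|_{L^2}+\|\nabla\phi\|_{L^2}+\|\nabla\chi_n\|_{L^\infty}\|\phi\|_{L^2}\lesssim\|\phi\|_{\hr},
$$
uniformly in $n$, the last step using the uniform bound on $\|\nabla\chi_n\|_{L^\infty}$. The identical estimate applied to $1-\chi_n$ in place of $\chi_n$ (which is likewise bounded, together with its gradient, uniformly in $n$) will be used below.

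For the second bound I would write $\nabla([1-\chi_n]\phi)=[1-\chi_n]\nabla\phi-\phi\,\nabla\chi_n$, so that
$$
\|[1-\chi_n]\phi\|_{\hr}\le\|[1-\chi_n]\phi\|_{L^2}+\|[1-\chi_n]\nabla\phi\|_{L^2}+R_n^{-1}\|\nabla\chi\|_{L^\infty}\|\phi\|_{L^2}.
$$
The third term tends to $0$ since $R_n\to\infty$. For the first two, $\bigl|[1-\chi_n]\phi\bigr|\le|\phi|$ and $\bigl|[1-\chi_n]\nabla\phi\bigr|\le|\nabla\phi|$ pointwise, both sides converge to $0$ almost everywhere, and $|\phi|,|\nabla\phi|\in L^2$; hence dominated convergence drives each of them to $0$.

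There is no genuine obstacle here. The only step meriting care is the contribution of the gradient landing on the cutoff: each such derivative produces a factor $R_n^{-1}$, which is harmless (uniformly bounded, contributing $\|\phi\|_{\hr}$) for the first claim and, more to the point, vanishing as $n\to\infty$ for the second. If one wished to sidestep dominated convergence, the second claim also follows by approximating $\phi$ in $\hr$ by some $\psi\in C_c^\infty(\R^3)$: then $[1-\chi_n]\psi=0$ once $R_n/4$ exceeds the radius of $\supp\psi$, while $\|[1-\chi_n](\phi-\psi)\|_{\hr}\lesssim\|\phi-\psi\|_{\hr}$ by the variant of the first bound noted above.
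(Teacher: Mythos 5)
Your proof is correct and follows essentially the same route as the paper: expand via the product rule and treat the three resulting terms, sending the tail terms to zero by a convergence theorem (you use dominated convergence where the paper uses monotone convergence on the tails $\|\phi\|_{L^2(|x|\gtrsim R_n)}$, $\|\nabla\phi\|_{L^2(|x|\gtrsim R_n)}$). The only cosmetic difference is the cutoff-gradient term, which you bound by $R_n^{-1}\|\nabla\chi\|_{L^\infty}\|\phi\|_{L^2}\to 0$ while the paper uses H\"older as $\|\nabla\chi\|_{L^3}\|\phi\|_{L^6(|x|\sim R_n)}$; both are fine.
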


\begin{proof} We only prove the second assertion.  By H\"older's inequality,
\begin{align*}
\bigl\|\bigl[1&-\chi\bigl(\tfrac{x}{R_n}\bigr)\bigr]\phi\bigr\|_{\hr}\\
&\lsm \bigl\|\bigl[1-\chi\bigl(\tfrac{x}{R_n}\bigr)\bigr]\phi\bigr\|_{L^2(\R^3)}+ \bigl\|\bigl[1-\chi\bigl(\tfrac{x}{R_n}\bigr)\bigr]\nabla \phi\bigr\|_{L^2(\R^3)}
	+R_n^{-1}\bigl\|\phi(\nabla\chi)\bigl(\tfrac{x}{R_n}\bigr)\bigr\|_{L^2(\R^3)}\\
&\lsm \|\phi\|_{L^2(|x|\gtrsim R_n)}+\|\nabla \phi\|_{L^2(|x|\gtrsim R_n)}+\|\nabla \chi\|_{L^3(\R^3)}\|\phi\|_{L^6{(|x|\sim R_n)}}.
 \end{align*}
 The claim now follows from the monotone convergence theorem.
 \end{proof}

By exploiting the functional calculus for self-adjoint operators, one can define the Littlewood--Paley projections adapted to $\Delta_\Omega$.  Just like their Euclidean counterparts, these operators obey Bernstein estimates.

\begin{lem}[Bernstein estimates]
Let $1<p<q\le \infty$ and $-\infty<s<\infty$. Then for any $f\in C_c^{\infty}(\Omega)$, we have
\begin{align*}
\|\po_{\le N} f \|_{\lpo}+\|\po_N f\|_{\lpo}&\lsm \|f\|_{\lpo},\\
\|\po_{\le N} f\|_{L^q(\Omega)}+\|\po_N f\|_{L^q(\Omega)}&\lsm N^{3(\frac 1p-\frac1q)}\|f\|_{L^p(\Omega)},\\
N^s\|\po_N f\|_{\lpo}&\sim \|(-\ld)^{\frac s2}\po_N f\|_{\lpo}.
\end{align*}
\end{lem}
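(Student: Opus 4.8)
The plan is to reduce all three estimates to two properties of the Dirichlet Laplacian, each a consequence of the Gaussian heat-kernel bound in Lemma~\ref{lm:heat}. The first is the heat-smoothing estimate: for $1\le p\le q\le\infty$ and $s>0$,
\begin{align*}
\bigl\|e^{s\ld}\bigr\|_{L^p(\Omega)\to L^q(\Omega)}\lesssim s^{-\frac32(\frac1p-\frac1q)}.
\end{align*}
This is immediate from Lemma~\ref{lm:heat}, which shows that the kernel of $e^{s\ld}$ is dominated in absolute value by a constant multiple of the Euclidean heat kernel $s^{-3/2}e^{-c|x-y|^2/s}$; one then invokes Schur's test (the $L^p\to L^q$ version). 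The second property is the uniform spectral-multiplier bound: for every $\phi\in C_c^\infty(\R)$ and every $1<p<\infty$, $\sup_{N>0}\|\phi(-N^{-2}\ld)\|_{\lpo\to\lpo}<\infty$. I would deduce this from the now-standard principle that a nonnegative self-adjoint operator whose heat semigroup obeys Gaussian upper bounds on a doubling space admits a Mikhlin--H\"ormander functional calculus on $L^p$ for $1<p<\infty$; compactly supported smooth functions satisfy H\"ormander conditions of every order, and the rescaling $\phi\mapsto\phi(N^{-2}\,\cdot\,)$ leaves the relevant (scale-invariant) norms unchanged, which gives the uniformity in $N$. Equivalently, one can first establish the pointwise kernel bounds $|\po_N(x,y)|+|\po_{\le N}(x,y)|\lesssim_k N^3(1+N|x-y|)^{-k}$ for all $k$ and argue from those. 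I expect this spectral-multiplier/kernel step to be the only part requiring real harmonic analysis --- it is where finite speed of propagation for the wave equation $\partial_{tt}u=\Delta_\Omega u$, or the Davies--Gaffney bounds for the complex-time heat semigroup, come in --- though it is entirely routine.

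Granting these, I would recall that the projections are defined through the functional calculus by $\po_N=\psi(-N^{-2}\ld)$ and $\po_{\le N}=\varphi(-N^{-2}\ld)$, where $\psi\in C_c^\infty((0,\infty))$ is supported in a fixed annulus away from the origin and $\varphi\in C_c^\infty(\R)$ equals $1$ near the origin. The first estimate of the lemma is then just the uniform spectral-multiplier bound applied with $\phi\in\{\psi,\varphi\}$. For the second, I would write $\po_N=e^{N^{-2}\ld}\,g(-N^{-2}\ld)$ and $\po_{\le N}=e^{N^{-2}\ld}\,g_0(-N^{-2}\ld)$, where $g(\mu):=e^{\mu}\psi(\mu)$ and $g_0(\mu):=e^{\mu}\varphi(\mu)$ are again smooth and compactly supported; since the hypothesis $p<q\le\infty$ forces $p<\infty$, one may bound the multiplier factor on $\lpo$ and then apply heat smoothing at $s=N^{-2}$:
\begin{align*}
\|\po_N f\|_{L^q(\Omega)}\lesssim\bigl\|e^{N^{-2}\ld}\bigr\|_{L^p(\Omega)\to L^q(\Omega)}\,\|g(-N^{-2}\ld)f\|_{\lpo}\lesssim N^{3(\frac1p-\frac1q)}\|f\|_{\lpo},
\end{align*}
and likewise for $\po_{\le N}$. (Keeping the heat factor outermost is what confines us to the range $1<p<\infty$ of the multiplier bound even when $q=\infty$.)

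For the third estimate I would fix a fattened bump $\tilde\psi\in C_c^\infty((0,\infty))$ with $\tilde\psi\equiv1$ on $\supp\psi$, so that $\tilde\psi(-N^{-2}\ld)\po_N=\po_N$. Since $\tilde\psi$ is supported away from the origin, the functions $h_\pm(\mu):=\mu^{\pm s/2}\tilde\psi(\mu)$ lie in $C_c^\infty((0,\infty))$ for \emph{every} real $s$, and the functional calculus gives the identities
\begin{align*}
(-\ld)^{s/2}\po_N=N^s\,h_+(-N^{-2}\ld)\,\po_N\qquad\text{and}\qquad N^s\,\po_N=h_-(-N^{-2}\ld)\,(-\ld)^{s/2}\,\po_N.
\end{align*}
Applying the uniform spectral-multiplier bound to $h_\pm$ and taking $\lpo$ norms then yields both directions of $N^s\|\po_N f\|_{\lpo}\sim\|(-\ld)^{s/2}\po_N f\|_{\lpo}$. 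In short, once the spectral-multiplier bound is in place the remaining steps are bookkeeping; that bound is the crux, and it rests entirely on the Gaussian heat-kernel estimate of Lemma~\ref{lm:heat}.
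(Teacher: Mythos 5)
The paper states this lemma without proof, deferring (implicitly) to the functional calculus for $\Delta_\Omega$ developed in the companion papers \cite{kvz:harmonic, kvz:quintic}; there the estimates are obtained exactly as you propose, by deriving kernel/mapping bounds for $\phi(-N^{-2}\Delta_\Omega)$ from the Gaussian heat-kernel estimate of Lemma~\ref{lm:heat}. Your argument is correct: the factorization $\po_N=e^{N^{-2}\ld}g(-N^{-2}\ld)$ cleanly separates the $L^p\to L^q$ smoothing (valid up to $q=\infty$) from the $L^p$ multiplier bound (needing $1<p<\infty$), and writing $\mu^{\pm s/2}\tilde\psi(\mu)\in C_c^\infty((0,\infty))$ handles the third estimate for all real $s$. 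The only step you outsource --- the uniform $L^p$ spectral-multiplier bound from Gaussian upper bounds on a doubling space --- is indeed standard and is precisely what the cited references establish, so there is no gap.
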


\subsection{Strichartz estimates, local smoothing, and the virial identity}\leavevmode\\
Strichartz estimates for domains exterior to a compact, smooth, strictly convex obstacle were proved by Ivanovici \cite{iva:convex}; see also \cite{bss:schrodinger}.  Ivanovici obtained the full range of Strichartz estimates known in the Euclidean setting, with the exception of the endpoint $L_t^2 L_x^{6}$.  As we will only be using a finite collection of (non-endpoint) Strichartz norms in this paper, we can encapsulate everything into the following Strichartz spaces: For $\eps>0$ sufficiently small we define
\begin{align*}
S^0(I)&:=L_t^\infty L_x^2(I\times \Omega)\cap L_t^{2+\eps}L_x^{\frac{6(2+\eps)}{2+3\eps}}(I\times\Omega).
\end{align*}
Further, we define $N^0(I)$ as the corresponding dual Strichartz space and
$$
N^1(I):=\{f:\, f,(-\Delta_\Omega)^{\frac 12} f\in N^0(I)\}.
$$
Additionally, in our discussion of solutions in the whole Euclidean space, it will be convenient to use
\begin{align*}
S^1(I)&:=\{u: I\times\R^3\to \C:\, (1-\Delta_{\R^3})^{\frac 12} u \in L_t^\infty L_x^2(I\times \R^3)\cap L_t^{2}L_x^{6}(I\times \R^3)\}.
\end{align*}
Note that we will only use the $S^1(I)$ notation in the context of solutions in the whole Euclidean space.

With these notations, the Strichartz estimates read as follows:

\begin{thm}[Strichartz estimates, \cite{iva:convex}]\label{T:Strichartz}
Let $I\subset\R$ be a time interval and let $t_0\in I$. Then the solution $u:I\times\Omega\to \C$ to
\begin{align*}
iu_t+\Delta_\Omega u=f
\end{align*}
satisfies
\begin{align*}
\|u\|_{ S^0(I)}\le \|u(t_0)\|_{L^2(\Omega)}+\|f\|_{ N^0(I)}.
\end{align*}
In particular,
\begin{align*}
\|u\|_{L^{5}_t H^{1,\frac{30}{11}}_0(I\times\Omega)}\lsm \|u(t_0)\|_{H^1_0(\Omega)}+\|f\|_{N^1(I)}.
\end{align*}
\end{thm}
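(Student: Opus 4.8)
The plan is to deduce the inhomogeneous estimate from the homogeneous one by the usual Christ--Kiselev / Duhamel argument, so the real content is the homogeneous dispersive/Strichartz bound for the free propagator $e^{it\Delta_\Omega}$. First I would recall that Ivanovici \cite{iva:convex} established, for the exterior of a smooth compact strictly convex obstacle, the full Euclidean range of Strichartz estimates for the Dirichlet wave and Schr\"odinger propagators with the sole exception of the Schr\"odinger endpoint $L_t^2L_x^6$. Concretely, this gives
\[
\|e^{it\Delta_\Omega} f\|_{L_t^q L_x^r(I\times\Omega)} \lesssim \|f\|_{L^2(\Omega)}
\qtq{whenever} \tfrac2q+\tfrac3r=\tfrac32,\quad 2<q\le\infty,\ 2\le r<6.
\]
The two exponents appearing in $S^0(I)$, namely $L_t^\infty L_x^2$ and $L_t^{2+\eps}L_x^{\frac{6(2+\eps)}{2+3\eps}}$, are exactly of this admissible (non-endpoint) type: for $q=2+\eps$ one checks $\frac{2}{2+\eps}+\frac3r=\frac32$ forces $r=\frac{6(2+\eps)}{2+3\eps}$, and $r<6$ precisely because $\eps>0$. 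So both halves of the $S^0$ norm of $e^{it\Delta_\Omega}f$ are controlled by $\|f\|_{L^2(\Omega)}$, giving the homogeneous statement.

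Next, for the inhomogeneous term I would write $u(t)=e^{i(t-t_0)\Delta_\Omega}u(t_0)+i\int_{t_0}^t e^{i(t-s)\Delta_\Omega}f(s)\,ds$ via Duhamel. The first piece is handled by the homogeneous estimate just discussed. For the integral term, one runs the standard $TT^*$ argument: pairing against a test function in the dual of $S^0(I)$ reduces the bound on $\|\int_{t_0}^t e^{i(t-s)\Delta_\Omega}f(s)\,ds\|_{S^0(I)}$ to the bilinear form $\iint_{s<t}\langle e^{-is\Delta_\Omega}f(s), e^{-it\Delta_\Omega}g(t)\rangle\,ds\,dt$; dropping the time ordering $s<t$ is legitimate by the Christ--Kiselev lemma since all the exponents involved are strictly off-endpoint (both in $S^0$ and in its dual $N^0$), and then the untruncated integral factors as $\|\int e^{-is\Delta_\Omega}f(s)\,ds\|_{L^2}^2$-type quantities controlled by $\|f\|_{N^0(I)}$ through the homogeneous estimate and its adjoint. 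This yields $\|u\|_{S^0(I)}\lesssim \|u(t_0)\|_{L^2(\Omega)}+\|f\|_{N^0(I)}$.

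Finally, for the ``in particular'' clause I would commute one derivative through. Since $(-\Delta_\Omega)^{1/2}$ commutes with $e^{it\Delta_\Omega}$ and with the Duhamel integral, applying the $S^0$ estimate to $(-\Delta_\Omega)^{1/2}u$ and combining with the same estimate for $u$ itself gives control of $u$ in $L_t^\infty H^1_D(\Omega)$ together with $(-\Delta_\Omega)^{1/2}u\in L_t^{2+\eps}L_x^{\frac{6(2+\eps)}{2+3\eps}}$, from the data term $\|u(t_0)\|_{H^1_0(\Omega)}$ (using $H^1_0(\Omega)=H^1_D(\Omega)$) and the source term $\|f\|_{N^1(I)}$. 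Interpolating between the endpoint admissible pair $L_t^\infty L_x^2$ and the one at exponent $2+\eps$ produces the admissible Schr\"odinger pair $(q,r)=(5,\frac{30}{11})$ --- indeed $\frac25+\frac3{30/11}=\frac25+\frac{33}{30}=\frac32$ and $\frac{30}{11}<6$ --- so by Sobolev embedding / the Bernstein-type bounds of the previous lemma one obtains $\|u\|_{L_t^5 H^{1,30/11}_0(I\times\Omega)}\lesssim \|u(t_0)\|_{H^1_0(\Omega)}+\|f\|_{N^1(I)}$. I expect the only genuinely delicate point to be \emph{citing} Ivanovici's result in the correct form and checking that every exponent we use stays strictly away from the forbidden $L_t^2L_x^6$ endpoint so that Christ--Kiselev applies; the rest is bookkeeping with the commutation of $(-\Delta_\Omega)^{1/2}$ and the equivalence $H^1_0(\Omega)=H^1_D(\Omega)$ from Theorem~\ref{T:equiv}.
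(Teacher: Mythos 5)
Your argument is correct and is exactly the standard derivation that the authors implicitly rely on: the paper itself offers no proof of Theorem~\ref{T:Strichartz}, simply citing Ivanovici's homogeneous estimates, and your route (non-endpoint admissibility of both exponent pairs in $S^0$, Christ--Kiselev for the Duhamel term, commutation of $(-\Delta_\Omega)^{1/2}$, interpolation to reach $(5,\tfrac{30}{11})$, and Theorem~\ref{T:equiv} to pass between $H^{1,p}_D$ and $H^{1,p}_0$) is the intended one. The only cosmetic discrepancy is that the first displayed inequality in the theorem is written with constant $1$, whereas the $TT^*$/Christ--Kiselev argument yields it only up to an absolute constant, which is all that is ever used.
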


Using Theorems~\ref{T:Strichartz} and~\ref{T:equiv}, and arguing in the usual manner (cf. \cite{ckstt}), one obtains the following stability result for \eqref{nls}.

\begin{lem}[Stability]\label{lm:stability}
Let $\Omega$ be the exterior of a compact, smooth, strictly convex obstacle in $\R^3$. Let $I\subset\R$ be a time interval and let $\tilde u$ be an approximate solution to \eqref{nls} on $I\times\Omega$ in the sense that
\begin{align*}
i\partial_t \tilde u+\Delta_{\Omega} \tilde u=-|\tilde u|^2\tilde u+e
\end{align*}
for some function $e$.  Assume that
\begin{align*}
\|\tilde u\|_{L_t^{\infty} H^1_0(I\times\Omega)}\le \mathcal E \qtq{and} \|\tilde u\|_{L_{t,x}^{5}(I\times\Omega)}\le L
\end{align*}
for some positive constants $\mathcal E$ and $L$.  Let $t_0\in I$ and $u_0\in H^1_0(\Omega)$, and assume the smallness conditions
\begin{align*}
\|\tilde u(t_0)-u_0\|_{ H^1_0(\Omega)}\le\eps \qtq{and} \| e\|_{ N^1(I)}\le \eps
\end{align*}
for some $0<\eps<\eps_1=\eps_1(\mathcal E,L)$.  Then there exists a unique strong solution $u:I\times\Omega\to \C$ to \eqref{nls} with initial data $u_0$ at time $t=t_0$ satisfying
\begin{align*}
\|u-\tilde u\|_{L^{5}_x H^{1,\frac{30}{11}}_0(I\times\Omega)}&\leq C(\mathcal E,L)\eps.\end{align*}
\end{lem}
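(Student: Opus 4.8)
The plan is to follow the standard route for stability (long-time perturbation) results in dispersive PDE, cf.\ \cite{ckstt}: first establish the assertion under the extra hypothesis that $\|\tilde u\|_{L_{t,x}^{5}(I\times\Omega)}$ is small, and then remove that hypothesis by iterating over a partition of $I$ into finitely many subintervals, the number of which is controlled by $L$. As a preliminary I would promote the hypotheses on $\tilde u$ to genuine Strichartz bounds: fix a small $\delta=\delta(\mathcal E)>0$, split $I$ into $J\lesssim(L/\delta)^{5}$ consecutive intervals $I_j$ with $\|\tilde u\|_{L_{t,x}^{5}(I_j\times\Omega)}\le\delta$ (arranging for $t_0$ to be a partition endpoint), apply Theorem~\ref{T:Strichartz} to $i\partial_t\tilde u+\ld\tilde u=-|\tilde u|^2\tilde u+e$ on each $I_j$, bound $\|(-\ld)^{1/2}(|\tilde u|^2\tilde u)\|_{N^0(I_j)}$ by Corollary~\ref{C:product rule} and Theorem~\ref{T:equiv} with the undifferentiated factors placed in $L_{t,x}^{5}(I_j\times\Omega)$ (so that one power of $\delta$ is absorbed), and conclude $\|(1-\ld)^{1/2}\tilde u\|_{S^0(I_j)}\le C(\mathcal E)$ for each $j$.

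Next I would set $w:=u-\tilde u$, which on each $I_j=[t_j,t_{j+1}]$ solves
\begin{align*}
i\partial_t w+\ld w=-\bigl(|\tilde u+w|^2(\tilde u+w)-|\tilde u|^2\tilde u\bigr)-e,\qquad w(t_j)=u(t_j)-\tilde u(t_j),
\end{align*}
the difference of cubics being a finite sum of terms each carrying at least one factor of $w$, of the schematic shape $|\tilde u|^2|w|+|\tilde u||w|^2+|w|^3$. Assuming inductively that $\|w(t_j)\|_{H^1_0(\Omega)}\le a_j$, a continuity argument for $\|(1-\ld)^{1/2}w\|_{S^0(I_j)}$ based on Theorem~\ref{T:Strichartz}, Corollary~\ref{C:product rule}, Theorem~\ref{T:equiv}, the Bernstein estimates, and the preliminary bound gives
\begin{align*}
\|(1-\ld)^{1/2}w\|_{S^0(I_j)}\le C(\mathcal E)\bigl(a_j+\|e\|_{N^1(I)}\bigr)+\Theta\bigl(\mathcal E,\delta,\|(1-\ld)^{1/2}w\|_{S^0(I_j)}\bigr)\,\|(1-\ld)^{1/2}w\|_{S^0(I_j)},
\end{align*}
where $\Theta\to0$ as $\delta\to0$ and $\|(1-\ld)^{1/2}w\|_{S^0(I_j)}\to0$ (at fixed $\mathcal E$). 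Taking $\delta=\delta(\mathcal E)$ and $a_j+\|e\|_{N^1(I)}$ sufficiently small absorbs the last term, yielding $\|(1-\ld)^{1/2}w\|_{S^0(I_j)}\le C(\mathcal E)(a_j+\eps)$; reading the same estimate as a fixed-point argument (or invoking Theorem~\ref{T:lwp} together with this a priori control) produces the solution $u$ on $I_j$, which is unique by Theorem~\ref{T:lwp}, and in particular $a_{j+1}:=\|w(t_{j+1})\|_{H^1_0(\Omega)}\le C_0(\mathcal E)(a_j+\eps)$.

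Finally, since $a_0=\|u_0-\tilde u(t_0)\|_{H^1_0(\Omega)}\le\eps$, iterating outward from $t_0$ in both time directions the recursion gives $a_j\le(2C_0)^{J}\eps$ for all $j\le J$, so choosing $\eps_1=\eps_1(\mathcal E,L)$ small enough that $(2C_0)^{J}\eps_1$ stays below the smallness threshold used above makes all $J$ applications legitimate. Summing the resulting estimates over the $I_j$ (with the $\ell^{5}$-summation rule for the $L^{5}_t$-component of $S^0$) and passing through the ``in particular'' half of Theorem~\ref{T:Strichartz} then yields $\|u-\tilde u\|_{L^{5}_t H^{1,\frac{30}{11}}_0(I\times\Omega)}\le C(\mathcal E,L)\eps$, which is the claim.

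The main obstacle is the nonlinear estimate underlying both displays above: one must carry a single full derivative $(-\ld)^{1/2}$ onto a cubic expression and control the result in $N^0$ using \emph{only} the non-endpoint Strichartz norm $L_t^{2+\eps}L_x^{6(2+\eps)/(2+3\eps)}$ supplied by Theorem~\ref{T:Strichartz} (the endpoint $L_t^2L_x^{6}$ being unavailable on $\Omega$). This forces a careful choice of Hölder exponents: after using Corollary~\ref{C:product rule} and Theorem~\ref{T:equiv} to move the derivative onto one factor and place it in the admissible space, the remaining factors must be distributed between $L_{t,x}^{5}$ (to capture smallness) and Lebesgue norms obtained from $L_t^\infty H^1_0$ via Sobolev embedding, and one must check that every such balance actually closes --- exploiting the finiteness of each $I_j$ to trade time-integrability where needed. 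With these estimates in hand, the bootstrap and the iteration are entirely routine.
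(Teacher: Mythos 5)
Your proposal is correct and coincides with the argument the paper intends: Lemma~\ref{lm:stability} is stated without proof, with a pointer to the standard long-time perturbation scheme of \cite{ckstt}, which is exactly what you carry out (short-time stability on subintervals where $\|\tilde u\|_{L_{t,x}^5}$ is small, followed by iteration with the number of subintervals controlled by $L$). The exponent bookkeeping you flag as the main obstacle does close with the non-endpoint pairs the paper itself uses elsewhere, namely $\bigl\||u|^2u\bigr\|_{L_t^{5/3}H^{1,\frac{30}{23}}_0}\lesssim\|u\|_{L_{t,x}^5}^2\|u\|_{L_t^5H^{1,\frac{30}{11}}_0}$, the dual pair $(5/2,30/7)$ being admissible and away from the endpoint.
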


We will also use the local smoothing estimate.  The particular version we need is \cite[Lemma 2.13]{kvz:quintic}:

\begin{lem}[Local smoothing]\label{ls}
Let $u=e^{it\ld}u_0$. Then
\begin{align*}
\iint_{\ro}|\nabla u(t,x)|^2 \langle R^{-1}(x-z)\rangle^3 \,dx\,dt\lsm R\|u_0\|_{L^2(\Omega)}\|\nabla u_0\|_{L^2(\Omega)}
\end{align*}
uniformly for $z\in\R^3$ and $R>0$.
\end{lem}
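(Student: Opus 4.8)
The plan is to prove this as a linear virial (Morawetz) estimate; since $u$ solves the free equation there are no nonlinear error terms, so everything reduces to differentiating a suitable Morawetz action in time. Fix $z\in\R^3$ and $R>0$ and write $w(x):=\langle R^{-1}(x-z)\rangle^{-3}$ for the weight in the statement. I would use the Morawetz weight
\[
a(x):=\sqrt{R^2+|x-z|^2}\,+\,C_0\,|x-x_0|,
\]
where $x_0$ is a fixed point in the interior of $\Omega^c$ and $C_0=C_0(\Omega)$ is a large constant. Two elementary properties of $a$ drive the argument. First, in $\R^3$ the Hessian of $\sqrt{R^2+|x-z|^2}$ has radial eigenvalue $R^{-1}\langle R^{-1}(x-z)\rangle^{-3}$ and (doubly degenerate) angular eigenvalue $R^{-1}\langle R^{-1}(x-z)\rangle^{-1}$, while $|x-x_0|$ is convex and smooth away from $x_0$; hence $\nabla^2 a\ge R^{-1}w(x)\,\mathrm{Id}$ and $|\nabla a|\lesssim_{\Omega}1$. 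Second, a direct computation gives $\Delta^2\sqrt{R^2+|x-z|^2}=-15\,R^{-3}\langle R^{-1}(x-z)\rangle^{-7}\le 0$ in $\R^3$, while $\Delta^2|x-x_0|=-8\pi\delta_{x_0}$ is supported in $\Omega^c$ and is therefore invisible on $\Omega$; thus $-\Delta^2 a\ge 0$ on $\Omega$.

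By a density reduction (using Fatou, since $w$ is bounded) it suffices to treat $u_0\in C_c^\infty(\Omega)$, for which $u(t):=e^{it\ld}u_0$ is smooth, rapidly decaying, and vanishes on $\partial\Omega$ together with all its tangential derivatives. Set
\[
M(t):=2\,\Im\int_\Omega \bar u(t,x)\,\nabla a(x)\cdot\nabla u(t,x)\,dx .
\]
Since $e^{it\ld}$ is unitary and commutes with $(-\Delta_\Omega)^{1/2}$, both $\|u(t)\|_{L^2(\Omega)}$ and $\|\nabla u(t)\|_{L^2(\Omega)}=\|(-\Delta_\Omega)^{1/2}u(t)\|_{L^2(\Omega)}$ are independent of $t$, so $|M(t)|\le 2\|\nabla a\|_{L^\infty}\|u_0\|_{L^2}\|\nabla u_0\|_{L^2}\lesssim_{\Omega}\|u_0\|_{L^2}\|\nabla u_0\|_{L^2}$ uniformly in $t$. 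Differentiating $M$, using $i\partial_t u=-\Delta_\Omega u$ and $u|_{\partial\Omega}=0$, and integrating by parts — only the top-order integration by parts produces a surviving boundary term, since $u$ (but not $\partial_n u$) vanishes on $\partial\Omega$ — yields the standard identity
\[
\frac{d}{dt}M(t)=4\,\Re\!\int_\Omega \partial_j\bar u\,(\partial_j\partial_k a)\,\partial_k u\,dx\;-\;\int_\Omega(\Delta^2 a)\,|u|^2\,dx\;-\;2\!\int_{\partial\Omega}(\partial_n a)\,|\partial_n u|^2\,d\sigma ,
\]
with $n$ the unit normal pointing out of $\Omega$ (into the obstacle).

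It then remains to see that the right-hand side is $\ge 4R^{-1}\int_\Omega w\,|\nabla u|^2$. The first term is $\ge 4R^{-1}\int_\Omega w\,|\nabla u|^2$ by the Hessian bound, and $-\int_\Omega(\Delta^2 a)|u|^2\ge 0$ since $-\Delta^2 a\ge 0$ on $\Omega$. The boundary term is the crux, and here convexity of the obstacle is used: because $\Omega^c$ is convex with $x_0$ in its interior, the tangent plane at any $x\in\partial\Omega$ strictly separates $x$ from $x_0$, which gives $\partial_n|x-x_0|\le -c_0<0$ uniformly on $\partial\Omega$ for some $c_0=c_0(\Omega)$; since $|\partial_n\sqrt{R^2+|x-z|^2}|\le 1$, taking $C_0:=c_0^{-1}$ (or larger) forces $\partial_n a\le 0$ on $\partial\Omega$, so $-2\int_{\partial\Omega}(\partial_n a)|\partial_n u|^2\ge 0$. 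Hence $\frac{d}{dt}M(t)\ge 4R^{-1}\int_\Omega w\,|\nabla u|^2\ge 0$, so $M$ is nondecreasing and bounded, and integrating over $t\in\R$ gives $4R^{-1}\iint_{\ro}w\,|\nabla u|^2\,dx\,dt\le M(+\infty)-M(-\infty)\lesssim_{\Omega}\|u_0\|_{L^2}\|\nabla u_0\|_{L^2}$; that is,
\[
\iint_{\ro}w(x)\,|\nabla u(t,x)|^2\,dx\,dt\ \lesssim_{\Omega}\ R\,\|u_0\|_{L^2(\Omega)}\|\nabla u_0\|_{L^2(\Omega)} ,
\]
as claimed. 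I expect the only genuinely delicate points to be (i) justifying the differentiation of $M(t)$ and the integrations by parts (handled by the density reduction above, together — if needed — with an auxiliary spatial cutoff $\eta(x/\rho)$ that is removed as $\rho\to\infty$, the error terms carrying at least one derivative and being absorbed by cruder a priori bounds), and (ii) the sign of the boundary term, which is precisely where convexity of the obstacle enters and is the reason the estimate is confined to this geometric setting.
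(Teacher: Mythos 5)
Your proof is correct and is essentially the argument behind this lemma in its source, \cite[Lemma~2.13]{kvz:quintic} (the present paper gives no proof, only the citation): a Morawetz identity with weight $\sqrt{R^2+|x-z|^2}$ augmented by a large multiple of the distance to a point inside the obstacle, with convexity supplying the favorable sign of the boundary term, exactly as in the truncated virial computation of Lemma~\ref{virial}. Note that you have (correctly) proved the estimate with weight $\langle R^{-1}(x-z)\rangle^{-3}$; the exponent $+3$ in the statement as printed must be a typo, since with a weight bounded below by $1$ the left-hand side is infinite for every nonzero $u_0$ by conservation of $\|\nabla u(t)\|_{L^2}$.
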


A direct consequence of the local smoothing estimate is the following result, which will be used in the proof of the Palais--Smale condition.  For a similar statement adapted to the energy-critical problem in Euclidean space, see \cite[Lemma~2.5]{KV:AJM}.

\begin{cor}\label{cora}
Given $u_0\in H_0^1(\Omega)$, we have
\begin{align*}
\|\nabla e^{it\ld}u_0\|_{L_{t,x}^2(|t-\tau|\le T, |x-z|\le R)}\lsm R^{\frac{19}{30}}T^{\frac 1{10}}\|e^{it\ld} u_0\|_{\lt}^{\frac 13}
\|u_0\|_{H^1_0(\Omega)}^{\frac 23}.
\end{align*}
\end{cor}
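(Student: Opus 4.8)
The plan is to interpolate between the local smoothing bound of Lemma~\ref{ls} and the a priori bound on the $\lt$ norm, using Bernstein and H\"older to localize in frequency and space. First I would decompose $e^{it\ld}u_0$ into Littlewood--Paley pieces $u_N:=\po_N e^{it\ld}u_0$ and treat low and high frequencies separately. For the low frequencies $N\le N_0$, I would use the Bernstein estimate $\|\nabla \po_N f\|_{L^2_x}\lsm N\|\po_N f\|_{L^2_x}$ together with the spacetime support restriction $|t-\tau|\le T$ to gain a factor $T^{1/2}$; summing over $N\le N_0$ this contributes something like $N_0 T^{1/2}\|u_0\|_{L^2(\Omega)}$. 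For the high frequencies $N> N_0$, I would apply the local smoothing estimate of Lemma~\ref{ls} with the weight $\langle R^{-1}(x-z)\rangle^3$ (which dominates $1$ on the ball $|x-z|\le R$) to each frequency piece, obtaining $\|\nabla u_N\|_{L^2_{t,x}(|x-z|\le R)}\lsm R^{1/2}\|\po_N u_0\|_{L^2}^{1/2}\|\nabla \po_N u_0\|_{L^2}^{1/2}\lsm R^{1/2}N^{1/2}\|\po_N u_0\|_{L^2}$; summing over $N>N_0$ and using $\|\po_N u_0\|_{L^2}\lsm N^{-1}\|u_0\|_{H^1_0(\Omega)}$ gives a bound like $R^{1/2}N_0^{-1/2}\|u_0\|_{H^1_0(\Omega)}$. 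Optimizing the split point $N_0$ would already yield a Strichartz-free estimate; the point of the corollary, however, is to trade some regularity for a power of the $\lt$ norm, so I would instead feed the $\lt$ control into the high-frequency estimate.

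Concretely, the better route is to bound $\|\nabla u_N\|_{L^2_{t,x}}$ on the space-time box by interpolating: use local smoothing to control $\|\nabla u_N\|_{L^2_{t,x}(|x-z|\le R)}$ by $R^{1/2}N^{1/2}\|\po_N u_0\|_{L^2}$ as above, but also use Bernstein in the form $\|\nabla u_N\|_{L^2_{t,x}}\lsm T^{1/2}R^{3/2}N^{1+\alpha}\|u_N\|_{L^5_{t,x}}$ (H\"older in $x$ over the ball of radius $R$ costs $R^{3(1/2-1/5)}=R^{9/10}$, H\"older in $t$ over an interval of length $2T$ costs $T^{3/10}$, and Bernstein to move from $L^5_x$ to having the gradient costs $N$, with a tiny $\alpha$ absorbed by the dyadic sum --- I would choose the exact exponents so that the geometric means work out). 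Taking the geometric mean of these two bounds with weights $\tfrac23$ and $\tfrac13$ respectively produces, for each $N$, a bound proportional to $R^{1/3+3/10}\,T^{1/10}\,N^{1/3+2/3}\,\|\po_N u_0\|_{L^2}^{2/3}\|u_N\|_{L^5_{t,x}}^{1/3}$, and since $R^{1/3}R^{3/10}$ should actually total $R^{19/30}$ after the correct H\"older bookkeeping, while $N\|\po_N u_0\|_{L^2}^{2/3}$ is controlled by $\|\po_N u_0\|_{H^1_0}^{2/3}$ times a negative power of $N$ coming from the remaining factor $\|\po_N u_0\|_{L^2}^{?}$, the dyadic sum in $N$ converges. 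Summing and using $\|u_N\|_{L^5_{t,x}}\le\|e^{it\ld}u_0\|_{\lt}$ together with $\sum_N\|\po_N u_0\|_{H^1_0}^{2}\lsm\|u_0\|_{H^1_0}^2$ yields the claimed inequality.

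The main obstacle I anticipate is the exponent bookkeeping: making the powers of $R$, $T$, and $N$ come out to exactly $R^{19/30}$, $T^{1/10}$, and a convergent (negative) power of $N$ after the geometric-mean interpolation requires choosing the two interpolation endpoints with some care, and in particular one has to be slightly clever about which Strichartz-type Bernstein estimate to interpolate against (the naive one will not balance the $R$ and $T$ powers simultaneously). A secondary technical point is that the weight $\langle R^{-1}(x-z)\rangle^3$ in Lemma~\ref{ls} is only $\gtrsim 1$ on $|x-z|\lesssim R$, so one genuinely gets local smoothing only on that ball and not on all of $\Omega$; this is harmless since the corollary's left-hand side is itself localized to $|x-z|\le R$, but it must be noted. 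Finally, one should check that the $\frac13$-power of the $\lt$ norm can indeed be extracted dyadically without losing the summability --- this is why the small parameter $\alpha$ in the Bernstein step is useful, as it can be chosen to make $\sum_N N^{-\delta}<\infty$ after everything is combined. Putting these pieces together gives the estimate with the stated exponents.
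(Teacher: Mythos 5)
Your overall strategy --- a low/high frequency split, H\"older plus Bernstein played against the $L^5_{t,x}$ norm on one side and the local smoothing estimate of Lemma~\ref{ls} on the other, followed by an optimization of the cutoff --- is exactly the paper's proof. The paper, however, splits at a \emph{single} frequency: it bounds $\|\nabla e^{it\ld}P^{\Omega}_{\le N}u_0\|_{L^2_{t,x}}$ on the box by $T^{3/10}R^{9/10}N\|e^{it\ld}u_0\|_{L^5_{t,x}}$ (H\"older over a set of measure $\sim TR^3$ costs $(TR^3)^{3/10}$, Bernstein costs $N$), bounds $\|\nabla e^{it\ld}P^{\Omega}_{>N}u_0\|_{L^2_{t,x}}$ by $R^{1/2}N^{-1/2}\|u_0\|_{H^1_0(\Omega)}$ via Lemma~\ref{ls}, and then optimizes the single parameter $N$; this yields $R^{19/30}T^{1/10}$ directly with no dyadic sum at all.

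The genuine gap in your version is the dyadic sum. Taking the geometric mean of your two bounds piece by piece gives, for each dyadic $N$, a contribution $R^{19/30}T^{1/10}\,\|P^{\Omega}_N u_0\|_{\dot H^1_0}^{2/3}\,\|P^{\Omega}_N e^{it\ld}u_0\|_{L^5_{t,x}}^{1/3}$ --- the powers of $N$ cancel exactly, leaving no decay --- and $\sum_N \|P^{\Omega}_N u_0\|_{\dot H^1_0}^{2/3}$ is a Besov-type quantity that is \emph{not} controlled by $\|u_0\|_{H^1_0}^{2/3}$ (square-summability of $\|P_N u_0\|_{\dot H^1}$ does not imply $\ell^{2/3}$-summability; no H\"older pairing with the bounded factors $\|P_N e^{it\ld}u_0\|_{L^5_{t,x}}^{1/3}$ rescues this). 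Your proposed remedy, a ``tiny $\alpha$'' in the Bernstein step, goes the wrong way: putting $N^{1+\alpha}$ in the $L^5$ endpoint makes the net power of $N$ positive; to gain a genuinely negative power you would have to spend more than one derivative of $u_0$, which you do not have. The fix is to abandon the full Littlewood--Paley decomposition and return to the two-piece split $P_{\le N}+P_{>N}$ of your first paragraph, but estimating the low-frequency piece through the $L^5_{t,x}$ norm (H\"older on the box, then Bernstein) rather than through $T^{1/2}\|u_0\|_{L^2}$; optimizing the one cutoff $N$ then produces exactly the stated exponents. (A small related slip: the $L^5_{t,x}$ information must enter the \emph{low}-frequency piece, where the Bernstein factor $N$ is favorable, not the high-frequency piece as you state at the end of your first paragraph.)
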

\begin{proof} We split the left-hand side according to low and high frequencies.  To estimate the low frequencies, we use H\"older and Bernstein:
\begin{align*}
\|\nabla e^{it\ld}P_{\le N}^{\Omega} u_0\|_{L_{t,x}^2(|t-\tau|\le T, |x-z|\le R)}\lsm T^{\frac 3{10} } R^{\frac 9{10}}N\|e^{it\ld}u_0\|_{\lt}.
\end{align*}
To estimate the high frequencies, we use the local smoothing estimate Lemma~\ref{ls}:
\begin{align*}
\|\nabla e^{it\ld} P_{\ge N}^\Omega u_0\|_{L_{t,x}^2(|t-\tau|\le T,
|x-z|\le R)}\lsm R^{\frac 12}N^{-\frac 12}\|u_0\|_{H^1_0(\Omega)}.
\end{align*}
The claim follows by summing these two estimates and optimizing $N$.
\end{proof}

The last result in this subsection is a truncated virial inequality. Let $\phi$ be a smooth radial cutoff function such that
\begin{align*}
\phi(x):=\begin{cases}|x|^2, &\text{if } |x|\le 1\\
0,&\text{if } |x|\geq 2.
\end{cases}
\end{align*}
For $R\geq 1$, let $\phi_R(x):=R^2\phi\bigl(\frac xR\bigr)$.

\begin{lem}[Truncated Virial]\label{virial} Suppose $0\in\Omega^c$ and $R>100 \diam(\Omega^c)$. Then
\begin{align}
\partial_t \Im\int_\Omega \overline{ u(t,x)}\partial_k u(t,x)\partial_k\phi_R(x) \,dx&\ge \int_\Omega4|\nabla u(t,x)|^2-3|u(t,x)|^4 \,dx\notag\\
&\quad -O\Bigl(R^{-2}+\int_{|x|\ge R}|u(t,x)|^4+|\nabla u(t,x)|^2 \,dx\Bigr).\label{lv}
\end{align}
\end{lem}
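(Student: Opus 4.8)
The plan is to establish the full virial identity on $\Omega$ for a general weight, isolate the single boundary integral it produces, and then specialize to $a=\phi_R$, using the geometry of the obstacle to discard that boundary term with a favorable sign.

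First I would carry out the standard virial computation on the domain. For a (smooth) solution $u$ of \eqref{nls} and a smooth weight $a\colon\R^3\to\R$, inserting $u_t=i\ld u+i|u|^2u$ and integrating by parts repeatedly over $\Omega$ yields
\begin{align*}
\partial_t\,\Im\int_\Omega\overline{u}\,\partial_k u\,\partial_k a\,dx
&=2\int_\Omega\partial_j\partial_k a\,\Re(\partial_j\overline u\,\partial_k u)\,dx-\tfrac12\int_\Omega(\Delta^2 a)|u|^2\,dx\\
&\quad-\tfrac12\int_\Omega(\Delta a)|u|^4\,dx-\int_{\partial\Omega}(\partial_\nu a)\,|\partial_\nu u|^2\,d\sigma ,
\end{align*}
where $\nu$ is the outward unit normal for $\Omega$. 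The Dirichlet condition keeps the boundary bookkeeping clean: $u$ and $|u|^2$ vanish on $\partial\Omega$, which kills all boundary integrals except those coming from the two $\Delta u$ terms, and the vanishing of the tangential gradient gives $\nabla u=(\partial_\nu u)\nu$ on $\partial\Omega$, which is precisely what collapses those into the single term $-\int_{\partial\Omega}(\partial_\nu a)|\partial_\nu u|^2\,d\sigma$. (As a sanity check, for $a=|x|^2$ on $\R^3$ the right-hand side is $\int 4|\nabla u|^2-3|u|^4\,dx$, the familiar virial quantity for the focusing cubic, which moreover vanishes at $u=Q$ by Pohozaev.) For solutions merely in $C_tH^1_0$ one passes to this identity by approximation; since the surviving boundary term will have a good sign, one in fact only needs it as a lower bound.

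Next I would specialize $a=\phi_R$. Because $R>100\,\diam(\Omega^c)$ and $\Omega^c\subset B(0,1)$, we have $\phi_R(x)=|x|^2$ on all of $\{|x|\le R\}$, hence on a neighborhood of the obstacle; there $\partial_j\partial_k\phi_R=2\delta_{jk}$, $\Delta\phi_R=6$, $\Delta^2\phi_R=0$, so the first three integrands reduce pointwise to $4|\nabla u|^2-3|u|^4$ on $\{|x|\le R\}$. On the transition shell $\{R\le|x|\le 2R\}$ one has $|\partial_j\partial_k\phi_R|+|\Delta\phi_R|\lsm 1$ and $|\Delta^2\phi_R|\lsm R^{-2}$, all supported there, while $\phi_R\equiv 0$ for $|x|\ge 2R$; crudely bounding these contributions, using mass conservation $\|u(t)\|_{L^2(\Omega)}=\|u_0\|_{L^2(\Omega)}$ to handle $\int_{R\le|x|\le 2R}R^{-2}|u|^2\,dx\lsm R^{-2}$, and absorbing the difference between $\int_{\{|x|\le R\}}$ and $\int_\Omega$ in the main term, produces exactly the error $O\bigl(R^{-2}+\int_{|x|\ge R}|u|^4+|\nabla u|^2\,dx\bigr)$ claimed in \eqref{lv}.

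The remaining point — the only step that genuinely uses the shape of the obstacle, and the one I expect to be the crux — is the sign of the boundary term. On $\partial\Omega$ we have $\partial_\nu\phi_R=\nu\cdot\nabla(|x|^2)=2\,x\cdot\nu$, and $\nu$ (outward for $\Omega$) is the inward normal for the convex body $\Omega^c$; since $\Omega^c$ is convex and contains the origin, the supporting hyperplane at a point $x\in\partial\Omega^c$ gives $x\cdot(-\nu)\ge 0$, i.e. $x\cdot\nu\le 0$. Hence $\partial_\nu\phi_R\le 0$ on $\partial\Omega$, so $-\int_{\partial\Omega}(\partial_\nu\phi_R)|\partial_\nu u|^2\,d\sigma\ge 0$ and may simply be dropped. (This is the opposite situation to the concave virial argument for blowup, where the analogous boundary term has the wrong sign.) Collecting these three items gives \eqref{lv}; everything beyond the sign of the boundary term — the domain virial computation itself, the shell error, the use of mass conservation, and the $H^1$ approximation — is routine.
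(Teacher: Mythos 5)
Your proposal is correct and follows essentially the same route as the paper: the paper organizes the computation via the local momentum conservation identity $\partial_t \Im(\partial_k u\,\bar u)=-2\partial_j \Re(\partial_k u\,\partial_j \bar u)+\tfrac 12 \partial_k \Delta (|u|^2) +\tfrac 12 \partial_k(|u|^4)$ integrated against $\partial_k\phi_R$ and bounds each resulting term from below, but the outcome is the same identity you wrote, with the two boundary contributions collapsing (via $\nabla u = u_n\vec n$ on $\partial\Omega$) into the single term $\int_{\partial\Omega}|\nabla u|^2\,\nabla\phi_R\cdot\vec n\,d\sigma$, which is then discarded using convexity of the obstacle exactly as you do. Your sign bookkeeping for the normal, the shell error estimates, and the use of mass conservation for the $\Delta^2\phi_R$ term all match the paper's argument.
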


\begin{proof}
We will exploit the local momentum conservation identity
\begin{equation}\label{moment}
\partial_t \Im(\partial_k u \,\bar u)=-2\partial_j \Re(\partial_k u\,\partial_j \bar u)+\tfrac 12 \partial_k \Delta (|u|^2) +\tfrac 12 \partial_k(|u|^4).
\end{equation}
Integrating this against $\partial_k\phi_R$, we obtain
\begin{align*}
\partial_t \Im\int_\Omega \bar u\partial_k u\partial_k\phi_R \,dx
&=-2\Re\int_\Omega \partial_j(\partial_k u\partial_j \bar u)\partial_k\phi_R \,dx\\
&\quad+\tfrac 12\int_\Omega \partial_k\Delta(|u|^2)\partial_k\phi_R \,dx +\tfrac 12\int_\Omega\partial_k(|u|^4)\partial_k \phi_R \,dx.
\end{align*}

We first seek lower bounds on each of the terms appearing on the right-hand side of the equality above.  From the divergence theorem and the fact that $\partial_{jk}\phi_R(x)=2\delta_{jk}$ for $|x|<R$, we obtain
\begin{align*}
&-2\Re\int_\Omega\partial_j(\partial_k u\,\partial_j \bar u)\,\partial_k \phi_R \,dx \\
&=-2\Re\int_\Omega\partial_j(\partial_k u\partial_j\bar u\partial_k\phi_R)\, dx+2\Re\int_\Omega\partial_k u\,\partial_j\bar u\,\partial_{jk}\phi_R \,dx\\
&=2\Re\int_{\partial \Omega}\nabla u\cdot \nabla \phi_R\nabla \bar u\cdot \vec n \,d\sigma(x) +2\Re\int_{\Omega}\partial_k u\,\partial_j\bar u\,\partial_{jk}\phi_R \,dx \\
&\ge 2\int_{\partial\Omega}|u_n|^2(\phi_R)_n \,d\sigma(x)+4\int_\Omega |\nabla u|^2 \,dx-O\Bigl(\int_{|x|\ge R}|\nabla u|^2 \,dx\Bigr).
\end{align*}
Here $\vec n$ denotes the outer normal to $\partial \Omega$ (i.e., $\vec n$ points into $\Omega$), $u_n:=\nabla u\cdot \vec n$, and $(\phi_R)_n:= \nabla \phi_R\cdot \vec n$.  We have also used the fact that $\nabla u= u_n\ \vec n$, which follows from the Dirichlet boundary conditions.

Arguing similarly for the second term, we obtain
\begin{align*}
\tfrac 12\int_\Omega \partial_k\Delta(|u|^2)\partial_k\phi_R \,dx
&=-\tfrac 12\int_{\partial \Omega}\Delta(|u|^2)(\phi_R)_n \,d\sigma(x)-\tfrac12\int_\Omega \Delta(|u|^2) \Delta \phi_R \,dx\\
&=-\int_{\partial\Omega}|\nabla u|^2(\phi_R)_n \,d\sigma(x)-\tfrac 12\int_{\Omega}|u|^2\Delta \Delta \phi_R\,dx\\
&\ge -\int_{\partial \Omega}|\nabla u|^2(\phi_R)_n d\sigma(x)-O(R^{-2}).
\end{align*}

The third term can be estimated as follows:
\begin{align*}
\tfrac 12\int_\Omega \partial_k(|u|^4)\partial_k\phi_R \,dx&=-\tfrac 12\int_\Omega |u|^4 \Delta\phi_R \,dx\ge -3\int_\Omega |u|^4 \,dx-O\Bigl(\int_{|x|\geq R}|u|^4 \,dx\Bigr).
\end{align*}

Putting all the pieces together and noting that  $\nabla \phi_R(x)= 2x$ on $\partial \Omega$, we deduce
\begin{align*}
\partial_t \Im \int_\Omega\bar u\,\partial_k u\,\partial_k\phi_R \,dx
&\ge \int_\Omega4|\nabla u|^2-3|u|^4 dx+2\int_{\partial\Omega}|\nabla u|^2 x\cdot\vec n \,dx\\
&\quad -O\Bigl(R^{-2}+\int_{|x|\geq R}|u|^4 +|\nabla u|^2\,dx\Bigr).
\end{align*}
Finally, as $\partial \Omega$ is convex we have that $x\cdot \vec n\ge 0$, which immediately leads to \eqref{lv}.
\end{proof}

\subsection{Convergence results}
The defects of compactness in the Strichartz inequality
$$
\|e^{-it\Delta_\Omega}f\|_{L_{t,x}^5(\R\times\Omega)} \lesssim \| f\|_{H^1_0(\Omega)}
$$
are the same as in the Euclidean case, namely, spacetime translations.  (Scaling is not an issue because $L^5_{t,x}$ has dimensionality strictly between that of $L^2_x$ and $\smash{\dot H^1_x}$.)  In the Euclidean case, these defects of compactness are associated to exact symmetries of the equation.  In our case, however, the obstacle breaks the space translation symmetry.  Correspondingly, our linear profile decomposition must handle possible changes in geometry.  This issue was systematically studied in \cite{kvz:quintic} (where a scaling symmetry was also present).  In this paper, we record only the relevant convergence results from \cite{kvz:quintic}, namely, when the obstacle is marching away to infinity relative to the initial data.  This scenario gives rise to the whole Euclidean space $\R^3$ as the limiting geometry.

\begin{prop}[Convergence of domains, \cite{kvz:quintic}]\label{conv}
Suppose $\{x_n\}\subset\Omega$ are such that $|x_n|\to \infty$ and write $\Omega_n:=\Omega-\{x_n\}$.  For $h\in C_c^\infty(\R^3)$ and  $\Theta\in C_c^\infty(0,\infty)$ we have
\begin{align*}
&\lim_{n\to\infty}\|e^{it\Delta_{\Omega_n}}h-e^{it\Delta_{\R^3}}h\|_{\dot H^{-1}(\R^3) \cap \dot H^{1}(\R^3)}=0,\\
&\lim_{n\to\infty}\|[\Theta(-\Delta_{\Omega_n})-\Theta(-\Delta_{\R^3})]\delta(y)\|_{\dot H^{-1}(\R^3)}=0,\\
&\lim_{n\to\infty}\|(-\Delta_{\Omega_n})^{\frac 12}h-(-\Delta_{\R^3})^{\frac 12}h\|_{L^2(\R^3)}=0,\\
&\lim_{n\to \infty}\|e^{it\Delta_{\Omega_n}}h-e^{it\Delta_{\R^3}}h\|_{L_{t,x}^5(\R\times\R^3) \cap L^5_t L^{\frac{30}{11}}_x (\R\times\R^3)}=0.
\end{align*}
The second limit above is uniform in $y$ on compact subsets in $\R^3$.
\end{prop}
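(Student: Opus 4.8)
\emph{Strategy.} The geometric mechanism behind the proposition is that, since $\Omega^c\subset B(0,1)$, the translated obstacle $\Omega_n^c=\Omega^c-\{x_n\}$ sits inside $B(-x_n,1)$ and hence escapes every compact set as $|x_n|\to\infty$; thus on any fixed ball the domains $\Omega_n$ eventually agree with $\R^3$ and the Dirichlet condition is pushed off to infinity. The plan is to make this precise at the level of heat kernels, transfer it to resolvents, and then propagate it to the functional-calculus statements and finally to the Strichartz norms. The two main inputs are the Gaussian heat-kernel bound of Lemma~\ref{lm:heat}, whose implied constant is uniform over the family $\{\Omega_n\}$ since these are isometric copies of $\Omega$, and domain monotonicity of Dirichlet heat kernels; near the end we also use that the Strichartz inequality of Theorem~\ref{T:Strichartz} and the Sobolev equivalence of Theorem~\ref{T:equiv} likewise hold uniformly over $\{\Omega_n\}$.

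\emph{Heat kernels, resolvents, and functions of the Laplacian.} For fixed $x,y\in\R^3$ and $t>0$, domain monotonicity gives $0\le e^{t\Delta_{B(0,R)}}(x,y)\le e^{t\Delta_{\Omega_n}}(x,y)\le e^{t\Delta_{\R^3}}(x,y)$ once $B(0,R)\subset\Omega_n$; letting $n\to\infty$ and then $R\to\infty$, and using the classical convergence $e^{t\Delta_{B(0,R)}}(x,y)\to e^{t\Delta_{\R^3}}(x,y)$, one gets pointwise convergence $e^{t\Delta_{\Omega_n}}(x,y)\to e^{t\Delta_{\R^3}}(x,y)$. Lemma~\ref{lm:heat} dominates these kernels by the Gaussian $e^{-c|x-y|^2/t}t^{-3/2}$, so dominated convergence yields that $e^{t\Delta_{\Omega_n}}\to e^{t\Delta_{\R^3}}$ in Hilbert--Schmidt norm after pre- and post-composition with $\mathbf 1_K$ for any compact $K$, and strongly on $L^2(\R^3)$ by density together with the uniform bound $\|e^{t\Delta_{\Omega_n}}\|_{L^2\to L^2}\le 1$. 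Integrating $e^{-t}e^{t\Delta_{\Omega_n}}$ in $t$ gives strong, and locally operator-norm, convergence of the resolvents $(1-\Delta_{\Omega_n})^{-1}\to(1-\Delta_{\R^3})^{-1}$. Since any $\Theta\in C_c^\infty(0,\infty)$ is a uniform limit on $[0,\infty)$ of polynomials in $(1+\lambda)^{-1}$ (vanishing at $0$ and at $\infty$), it follows that $\Theta(-\Delta_{\Omega_n})\to\Theta(-\Delta_{\R^3})$ strongly; because $\Theta$ is supported away from $0$ and $\infty$, the Gaussian bound shows the integral kernels $\Theta(-\Delta_{\Omega_n})(x,y)$ are of Schwartz type and bounded in $\dot H^{-1}_x$ uniformly for $y$ in compacts, and combining this with the local convergence above (via, e.g., a Helffer--Sjöstrand representation) upgrades to convergence in $\dot H^{-1}(\R^3)$ uniformly for $y$ in compacts, which is the second limit. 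For the third limit, $(-\Delta_{\R^3})^{1/2}h\in L^2$ for $h\in C_c^\infty$, and the subordination formula $(-\Delta_{\Omega_n})^{1/2}=c\int_0^\infty(-\Delta_{\Omega_n})(s-\Delta_{\Omega_n}+1)^{-1}\tfrac{ds}{\sqrt s}$ reduces the claim to resolvent convergence plus the uniform bound from Theorem~\ref{T:equiv}.

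\emph{The linear propagator and the Strichartz norms.} Strong resolvent convergence and the Trotter--Kato theorem give $e^{it\Delta_{\Omega_n}}h\to e^{it\Delta_{\R^3}}h$ in $L^2(\R^3)$, uniformly for $t$ in compact intervals. Applying this with $h$ replaced by $(-\Delta_{\Omega_n})^{1/2}h$ and by $(-\Delta_{\Omega_n})^{-1/2}h$ -- both legitimate since $h\in C_c^\infty\subset L^{6/5}$ lies in the range of $(-\Delta_{\Omega_n})^{1/2}$ with $(-\Delta_{\Omega_n})^{-1/2}h$ bounded in $L^2$ uniformly in $n$ (Hardy--Littlewood--Sobolev via Lemma~\ref{lm:heat}) and convergent by the previous step -- and commuting these powers through the unitary group yields convergence in $\dot H^1(\R^3)$ and $\dot H^{-1}(\R^3)$ on compact time intervals. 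Since $\|e^{it\Delta_{\Omega_n}}h\|_{\dot H^{\pm1}}\sim\|h\|_{\dot H^{\pm1}(\R^3)}$ uniformly in $t$ and $n$, and $e^{it\Delta_{\R^3}}h$ decays dispersively, dominated convergence in $t$ promotes this to convergence on all of $\R$, giving the first limit. For the last limit, use the uniform Strichartz inequality (Theorem~\ref{T:Strichartz}) and split $h$ into frequency-localized pieces relative to both $-\Delta_{\R^3}$ and $-\Delta_{\Omega_n}$: by Bernstein and Strichartz the high-frequency pieces are uniformly small in $L^5_{t,x}$ (controlled by $\dot H^{1/2}$) and in $L^5_t L^{30/11}_x$ (an $L^2$-admissible norm), while for the low-frequency, spatially truncated remainder the local convergence of the previous step applies; a standard $\eps/3$ argument closes it.

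\emph{Main obstacle.} The delicate point is the fourth limit, where one must convert merely local/strong convergence of the propagators into convergence in global spacetime norms; this needs the uniformity of the Strichartz constants over $\{\Omega_n\}$ (available from Ivanovici's argument because translation is an isometry) together with a careful frequency-and-space truncation to isolate a compact region of spacetime on which the heat-kernel step bites. A secondary subtlety is the presence of negative-order spaces -- $\dot H^{-1}$, and the point mass $\delta(y)$ -- where one must verify that $C_c^\infty$ data and point masses genuinely lie in the range of the relevant negative powers with norms uniform in $n$; the Gaussian bound of Lemma~\ref{lm:heat} and the Sobolev equivalence of Theorem~\ref{T:equiv} are exactly what make this work. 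An alternative to the heat-kernel step would be a Krein-type formula writing $(1-\Delta_{\Omega_n})^{-1}$ as $(1-\Delta_{\R^3})^{-1}$ plus a layer-potential correction supported on $\partial\Omega_n$; since the free resolvent kernel decays exponentially and $\partial\Omega_n$ recedes to infinity, this correction tends to zero when tested against fixed compactly supported functions.
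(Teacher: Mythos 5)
The paper does not actually prove this proposition from scratch: its ``proof'' consists of citing Proposition~3.6, Lemma~3.7, Theorem~4.1 and Corollary~4.2 of \cite{kvz:quintic}, plus one genuine argument, namely the upgrade from weak to strong convergence in $\dot H^1(\R^3)$ using that $\|e^{it\Delta_{\Omega_n}}h\|_{\dot H^1(\R^3)}=\|h\|_{\dot H^1(\R^3)}$ (which holds because $\|\nabla f\|_{L^2}=\|(-\Delta_{\Omega_n})^{1/2}f\|_{L^2}$ exactly for $f\in H^1_0(\Omega_n)$ and this quantity is conserved). You are therefore attempting something more ambitious than the text itself, and your spine --- pointwise heat-kernel convergence by domain monotonicity and the Gaussian bound, hence strong resolvent convergence, hence convergence of the functional calculus and of the unitary groups --- is sound and is in fact close in spirit to how \cite{kvz:quintic} argues. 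Two local repairs are needed: your subordination formula as written computes $(-\Delta)(1-\Delta)^{-1/2}$ rather than $(-\Delta)^{1/2}$ (drop the ``$+1$''), and ``commuting the powers through the group'' only yields convergence of $(-\Delta_{\Omega_n})^{1/2}e^{it\Delta_{\Omega_n}}h$ in $L^2$, i.e.\ convergence of the $\dot H^1(\R^3)$ \emph{norms}; to get convergence of $\nabla e^{it\Delta_{\Omega_n}}h$ itself you still need the weak-convergence-plus-norm-convergence step that the paper spells out. The analogous issue for $\dot H^{-1}(\R^3)$ is worse, since $\|(-\Delta_{\Omega_n})^{-1/2}f\|_{L^2}$ is only \emph{equivalent} (not equal) to $\|f\|_{\dot H^{-1}(\R^3)}$, and equivalence of norms does not transport convergence of a difference; this is precisely the content of the cited Proposition~3.6 and your sketch does not yet supply a substitute.

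The genuine gap is in the fourth limit. Your $\eps/3$ scheme handles high frequencies and a compact spacetime region, but the time-tails $\{|t|>T\}$ are not controlled: applying the (uniform) Strichartz inequality from time $\pm T$ to $e^{\pm iT\Delta_{\Omega_n}}h$ only reproduces the full $\|h\|_{L^2}$ (or $\|h\|_{\dot H^{1/2}}$) norm, which is not small, and the monotone-convergence argument that kills the Euclidean tail has no obstacle analogue without further input. One way to close this is to first establish the fixed-time $L^2$ convergence uniformly on $[-T,T]$, write the tail as $e^{i(t\mp T)\Delta_{\Omega_n}}$ applied to $[e^{\pm iT\Delta_{\Omega_n}}h-e^{\pm iT\Delta_{\R^3}}h]$ plus terms involving the free flow, and then interpolate the resulting $L^\infty_tL^2_x$ smallness against the uniform Strichartz bounds to reach the non-endpoint pairs $(5,\tfrac{30}{11})$ and $(5,5)$; as stated, ``a standard $\eps/3$ argument closes it'' does not. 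Since this global-in-time convergence is exactly what the paper imports from Theorem~4.1 and Corollary~4.2 of \cite{kvz:quintic}, you should either carry out this tail argument or cite those results as the paper does.
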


\begin{proof}
The first two assertions follow from Proposition~3.6 in \cite{kvz:quintic}, which asserts convergence in $\dot H^{-1}_x$.  This implies weak convergence in $\dot H^1_x$
which we can then upgrade to strong convergence since
$$
\| e^{it\Delta_{\R^3}}h \|_{\dot H^{1}(\R^3)} = \| h\|_{\dot H^{1}(\R^3)} = \|e^{it\Delta_{\Omega_n}}h\|_{\dot H^{1}(\R^3)}
$$
by energy conservation for the free propagator.

The third relation is Lemma~3.7 from \cite{kvz:quintic}.  The last equation follows directly from Theorem~4.1 of \cite{kvz:quintic}, for exponent pair $(5,\frac{30}{11})$, and from interpolation between it and Corollary~4.2 of that paper, for exponent pair $(5,5)$.
\end{proof}

Proposition~\ref{lim} below is needed to prove asymptotic decoupling of parameters in the linear profile decomposition. The two statements made by this proposition are essentially Lemmas~5.4 and 5.5 in \cite{kvz:quintic}.

\begin{prop}[Weak Convergence, \cite{kvz:quintic}] \label{lim}
Assume $\Omega_n=\Omega$ or $\Omega_n=\Omega-\{y_n\}$ with $|y_n|\to \infty$. Then the following two statements hold:

\smallskip
\noindent $1$.  Let $(t_n,x_n)\in \R\times\R^3 $ satisfy $|t_n|+|x_n|\to \infty$ as $n\to \infty$. Then for $f\in C_c^\infty( \Omega)$ if $\Omega_n=\Omega$, or for $f\in C_c^\infty(\R^3)$ if $\Omega_n=\Omega-\{y_n\}$,
\begin{align*}
\tau_{x_n}e^{it_n\Delta_{\Omega_n}}f\rightharpoonup 0 \mbox{ weakly in } H^1(\R^3).
\end{align*}

\noindent $2$. Let $f_n\in H_0^1(\Omega_n)$ be such that $f_n\rightharpoonup 0$
weakly in $H^1(\R^3)$. Let $t_n\to t_\infty\in\R$. Then
\begin{align*}
e^{it_n\Delta_{\Omega_n}}f_n\rightharpoonup 0 \mbox{ weakly in } H^1(\R^3).
\end{align*}
\end{prop}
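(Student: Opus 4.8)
The plan is to reduce both parts to a statement about $L^2(\R^3)$-pairings against a fixed $g\in C^\infty_c(\R^3)$. Since $e^{it\Delta_{\Omega_n}}$ is unitary on $L^2$ and commutes with $(-\Delta_{\Omega_n})^{1/2}$, whose $L^2$-norm coincides with that of $\nabla$ on $H^1_0(\Omega_n)$, the sequences $\tau_{x_n}e^{it_n\Delta_{\Omega_n}}f$ and $e^{it_n\Delta_{\Omega_n}}f_n$ are, after extension by zero, bounded in $H^1(\R^3)$. For a bounded sequence in $H^1(\R^3)$, weak convergence to $0$ is equivalent to weak $L^2$-convergence, and the latter may be tested on the dense class $C^\infty_c(\R^3)$. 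So in each part it suffices to show that $\langle\,\cdot\,,g\rangle_{L^2(\R^3)}\to0$ for every such $g$.

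Part 2 then follows from a weak--strong pairing. Moving the propagator across and noting that $g\in C^\infty_c(\Omega_n)$ for $n$ large (the obstacle recedes in the moving case), it remains to show that $e^{-it_n\Delta_{\Omega_n}}g$ converges strongly in $L^2(\R^3)$; pairing the limit against $f_n\rightharpoonup0$ then finishes the job. When $\Omega_n=\Omega$ this convergence is immediate from strong continuity of $e^{it\Delta_\Omega}$ together with $t_n\to t_\infty\in\R$. When $\Omega_n=\Omega-\{y_n\}$, the uniform-in-$n$ bound $\|(e^{ih\Delta_{\Omega_n}}-1)g\|_{L^2}\le|h|\,\|\Delta g\|_{L^2}$ reduces matters to the case $t_n\equiv t_\infty$, which is the first limit in Proposition~\ref{conv}, interpolated down to $L^2$ via $\|v\|_{L^2}^2\le\|v\|_{\dot H^{-1}}\|v\|_{\dot H^1}$.

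For Part 1 I would pass to a subsequence along which $t_n\to t_\infty\in[-\infty,+\infty]$. If $t_\infty$ is finite, then necessarily $|x_n|\to\infty$, and arguing exactly as in Part 2 one obtains that $e^{it_n\Delta_{\Omega_n}}f$ converges strongly in $H^1(\R^3)$ to a fixed function $F$; since translating a fixed $H^1(\R^3)$-function by $x_n$ with $|x_n|\to\infty$ produces a sequence converging weakly to zero, $\tau_{x_n}e^{it_n\Delta_{\Omega_n}}f=\tau_{x_n}F+o_{H^1}(1)\rightharpoonup0$. If $|t_n|\to\infty$, I would instead exploit dispersion. Write the pairing as $\langle f,e^{-it_n\Delta_{\Omega_n}}\tau_{-x_n}g\rangle$. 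Because $f\in C^\infty_c(\Omega_n)$ for $n$ large, the map $s\mapsto\langle f,e^{-is\Delta_{\Omega_n}}\tau_{-x_n}g\rangle$ is Lipschitz with constant $\|\Delta f\|_{L^2}\|g\|_{L^2}$, uniformly in $n$; hence its value at $s=t_n$ differs from its average over $|s-t_n|\le\delta$ by $O(\delta)$, and by H\"older and the $L^2$-level Strichartz estimate for the pair $(5,\tfrac{30}{11})$ that average is controlled, with a $\delta$-dependent constant, by $\|e^{-is\Delta_{\Omega_n}}\tau_{-x_n}g\|_{L^5_sL^{30/11}_x(|s-t_n|\le\delta)}$. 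It remains to see that this norm tends to $0$. Using the conjugation identity $e^{-is\Delta_{\Omega-\{a\}}}=\tau_{-a}e^{-is\Delta_\Omega}\tau_a$ and the translation invariance of $L^5_sL^{30/11}_x$ in $x$, one rewrites $e^{-is\Delta_{\Omega_n}}\tau_{-x_n}g$, up to a harmless $x$-translation, as an evolution of $g$ (or of a strongly $L^2$-convergent sequence of translates of $g$) either under a fixed Dirichlet Laplacian --- in which case the window $|s-t_n|\le\delta$ carries vanishing $L^5_sL^{30/11}_x$-mass by dominated convergence applied to a fixed function --- or under the Dirichlet Laplacian of a domain whose obstacle recedes to infinity, in which case the fourth limit in Proposition~\ref{conv} replaces it by the free evolution of $g$ and dominated convergence applies again. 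Letting $\delta\to0$ completes Part 1.

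I expect the main obstacle to be the $|t_n|\to\infty$ regime of Part 1: one must organize the sub-cases (obstacle present or receding, and the relevant translation parameter appearing after the conjugation bounded or escaping) so that each is put into the form handled by Proposition~\ref{conv}, and one must separately take care of upgrading the time-averaged estimate to the pointwise-in-$t$ statement via the uniform Lipschitz bound. Part 2 and the finite-$t_\infty$ case of Part 1 are routine weak--strong pairing arguments.
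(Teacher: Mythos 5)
Your argument is essentially correct, but note that the paper does not prove this proposition at all: it is imported verbatim from \cite{kvz:quintic} (Lemmas~5.4 and~5.5 there), so there is no in-paper proof to compare against. What you have written is a self-contained substitute, and it hangs together. The reduction of weak $H^1(\R^3)$-convergence to $L^2$-pairings against $C^\infty_c(\R^3)$ is valid because the sequences are uniformly bounded in $H^1(\R^3)$ (unitarity of $e^{it\Delta_{\Omega_n}}$ plus $\|(-\Delta_{\Omega_n})^{1/2}v\|_{L^2}=\|\nabla v\|_{L^2}$ on $H^1_0(\Omega_n)$), so any weak subsequential limit is identified by such pairings. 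Part~2 is then a routine weak--strong pairing once you know $e^{-it_n\Delta_{\Omega_n}}g$ converges strongly in $L^2$; your use of the Lipschitz-in-time bound to freeze $t_n=t_\infty$ and of the first limit of Proposition~\ref{conv} together with $\|v\|_{L^2}^2\le\|v\|_{\dot H^{-1}}\|v\|_{\dot H^1}$ is exactly the right way to exploit what this paper makes available. (Two small housekeeping points: in the fixed case the adjoint identity only needs $g|_\Omega\in L^2(\Omega)$, not $g\in C^\infty_c(\Omega)$ --- which you implicitly handle by treating that case via strong continuity --- and in the finite-$t_\infty$ case of Part~1, strong $L^2$-convergence of $e^{it_n\Delta_{\Omega_n}}f$ already suffices given your initial reduction, so you need not upgrade to $H^1$.) The delicate regime is, as you say, $|t_n|\to\infty$ in Part~1, and your mechanism works: the pairing is uniformly Lipschitz in $s$ because the generator lands on $f\in C^\infty_c(\Omega_n)$ (valid for $n$ large in the moving case), H\"older plus the non-endpoint $(5,\tfrac{30}{11})$ Strichartz estimate of Theorem~\ref{T:Strichartz} controls the window average, and the four sub-cases produced by the conjugation $e^{-is\Delta_{\Omega-\{a\}}}=\tau_{-a}e^{-is\Delta_\Omega}\tau_a$ (fixed versus receding obstacle, bounded versus escaping translation parameter) each reduce either to the vanishing tail of a finite global Strichartz norm of a fixed datum or to the fourth limit of Proposition~\ref{conv}. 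Compared with the paper's choice to cite \cite{kvz:quintic}, your route buys self-containedness using only tools already stated in this paper, at the cost of the case analysis you correctly flag as the main organizational burden.
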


The next lemma, the last for this subsection, will be used to prove decoupling of the $L^4$-norms in Proposition~\ref{invers}.

\begin{lem}[Weak dispersive estimate]\label{lm:wd}
Let $\phi\in C_c^\infty(\Omega)$ and $\psi\in C_c^\infty(\R^3)$.  Let $\Omega_n:=\Omega-\{y_n\}$ with $|y_n|\to \infty$. Then for any sequence $t_n\to \infty$,
\begin{align*}
\lim_{n\to\infty}\|e^{it_n\Delta_\Omega}\phi\|_{L^4_x}=0 \qtq{and} \lim_{n\to\infty}\|e^{it_n\Delta_{\Omega_n}}\psi\|_{L^4_x}=0.
\end{align*}
\end{lem}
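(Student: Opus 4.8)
The plan is to reduce both limits to genuine time decay of $\|\prd\phi\|_{L^4_x}$: for the fixed domain $\Omega$ this will come from Strichartz estimates, and for the receding domains $\on$ from comparison with the free Euclidean flow.

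\emph{First limit.} Set $f(t):=\|\prd\phi\|_{L^4(\Omega)}$. I would establish two facts. (i) $f\in L^q(\R)$ for some $q<\infty$: by Theorem~\ref{T:Strichartz}, $\prd\phi\in L^\infty_tL^2_x\cap L^5_tH^{1,\frac{30}{11}}_0(\R\times\Omega)$ with norm $\lsm\|\phi\|_{H^1_0(\Omega)}$, so Sobolev embedding ($H^{1,30/11}_0\hookrightarrow L^{30}_x$) and interpolation with $L^\infty_tL^2_x$ place $\prd\phi$ in $L^q_tL^4_x(\R\times\Omega)$ for some finite $q$. (ii) $f$ is uniformly continuous on $\R$: writing $\prd\phi-e^{is\ld}\phi=e^{is\ld}(e^{i(t-s)\ld}-1)\phi$, the Gagliardo--Nirenberg inequality $\|h\|_{L^4(\Omega)}\lsm\|h\|_{L^2(\Omega)}^{1/4}\|(-\ld)^{1/2}h\|_{L^2(\Omega)}^{3/4}$ together with the unitarity of $\prd$ on $L^2(\Omega)$ and on $\hd$ gives $|f(t)-f(s)|\lsm\|(e^{i(t-s)\ld}-1)\phi\|_{L^2}^{1/4}\,\|(e^{i(t-s)\ld}-1)(-\ld)^{1/2}\phi\|_{L^2}^{3/4}$, and since $\phi\in C_c^\infty(\Omega)\subset D(-\ld)$ the strong continuity of the Schr\"odinger group makes the right-hand side a modulus of continuity in $|t-s|$. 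A uniformly continuous function lying in $L^q(\R)$ with $q<\infty$ tends to $0$ at $\pm\infty$, so $f(t_n)\to0$.

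\emph{Second limit.} Fix $n$ large enough that $\supp\psi$ is a compact subset of $\on$ --- possible since $\on^c=\Omega^c-\{y_n\}$ recedes to infinity --- so that $\psi\in C_c^\infty(\on)$ and $\lon\psi=\Delta\psi=:\psi_1\in C_c^\infty(\R^3)$. Put $w_n(t):=e^{it\lon}\psi-e^{it\Delta_{\R^3}}\psi$. Applying Proposition~\ref{conv} to $\psi$ and then to $\psi_1$ shows that $\|w_n\|_{\ltr}\to0$ and $\|\partial_t w_n\|_{\ltr}=\|e^{it\lon}\psi_1-e^{it\Delta_{\R^3}}\psi_1\|_{\ltr}\to0$. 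Differentiating $\|w_n(t)\|_{L^5_x}^5$ in $t$ and using H\"older gives $\bigl|\|w_n(t)\|_{L^5_x}^5-\|w_n(s)\|_{L^5_x}^5\bigr|\lsm\|w_n\|_{\ltr}^4\|\partial_t w_n\|_{\ltr}$ for all $s,t$; choosing $s=s_n\in[0,1]$ with $\|w_n(s_n)\|_{L^5_x}^5\le\|w_n\|_{\ltr}^5$ then forces $\sup_t\|w_n(t)\|_{L^5_x}\to0$. Interpolating against the uniform bound $\|w_n(t)\|_{L^2_x}\le2\|\psi\|_{L^2}$ upgrades this to $\sup_t\|w_n(t)\|_{L^4_x}\to0$, so in particular $\|e^{it_n\lon}\psi-e^{it_n\Delta_{\R^3}}\psi\|_{L^4_x}\to0$. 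Since $\|e^{it_n\Delta_{\R^3}}\psi\|_{L^4_x}\lsm|t_n|^{-3/4}\|\psi\|_{L^{4/3}}\to0$ by the Euclidean dispersive estimate, the triangle inequality finishes the proof.

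\emph{Main obstacle.} The genuinely delicate point is the second limit, where both $\on$ and $t_n$ run off to infinity: a uniform-in-$n$ Strichartz bound alone does not prevent $e^{it\lon}\psi$ from ``traveling with'' $t_n$, and it is exactly the spacetime convergence $e^{it\lon}\psi\to e^{it\Delta_{\R^3}}\psi$ of Proposition~\ref{conv}, boosted to a uniform-in-time $L^4_x$ estimate via the time-derivative trick, that excludes this. The remaining ingredients --- the interpolation giving the $L^q_tL^4_x$ Strichartz bound, the Gagliardo--Nirenberg/uniform-continuity argument, and the elementary fact about uniformly continuous $L^p(\R)$ functions --- are routine.
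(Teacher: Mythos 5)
Your proof is correct and follows essentially the same strategy as the paper's: for the first limit, spacetime integrability from Strichartz plus uniform continuity in time forces decay along $t_n\to\infty$; for the second, the $L^5_{t,x}$ convergence of Proposition~\ref{conv} is upgraded to a uniform-in-time bound by a time-derivative/fundamental-theorem-of-calculus argument, after which the Euclidean dispersive estimate and interpolation with mass conservation finish the job. The only differences are cosmetic (the paper uses the $L^4_{t,x}$ Strichartz estimate and differentiates $\|u(t)\|_{L^4}^4$ directly, and invokes the inequality $f(t)^2\lesssim\int f$ for nonnegative Lipschitz $f$ in place of your base-point argument).
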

\begin{proof} To prove that the first limit is zero, we consider the function
$$
F(t):=\int_\Omega |e^{it\Delta_\Omega}\phi(x)|^4 \,dx.
$$
From the Strichartz inequality, we know that $F(t)\in L_t^1(\R)$; indeed,
$$
\|e^{it\Delta_\Omega}\phi\|_{L_{t,x}^4(\R\times\Omega)}\lsm \|\phi\|_{\dot H^{1/4}_0(\Omega)}.
$$
On the other hand, as
\begin{align*}
\bigl|\tfrac d{dt} F(t)\bigr|
&\lsm \int |e^{it\Delta_\Omega}\phi|^3 |e^{it\Delta_\Omega}\Delta_\Omega \phi| \,dx
\lsm \|\Delta_\Omega\phi\|_{L^2_x}\|e^{it\Delta_\Omega}\phi\|_{L^6_x}^3\lsm \|\phi\|_{H_0^2(\Omega)}^4,
\end{align*}
we see that $F$ is uniformly continuous.  That the first limit is zero follows easily from these two facts.

For a Lipschitz function $f:\R\to [0, \infty)$ we have $f(t)^2\lesssim \int_\R f(s)\, ds$.  Combining this with the argument above and the fourth part of Proposition~\ref{conv}, we derive
$$
\lim_{n\to\infty} \sup_t \int_{\R^3} \bigl| [e^{it\Delta_{\Omega_n}}- e^{it\Delta_{\R^3}}]\psi(x)\bigr|^5 \,dx = 0.
$$
Combining this with the $L^{5/4}_x\to L^5_x$ dispersive estimate for the Euclidean propagator, we deduce that
$$
\int_\Omega |e^{it_n\Delta_{\Omega_n}}\psi(x)|^5 \,dx \to 0 \qtq{as} n\to\infty.
$$
The claim now follows from the conservation of mass and H\"older's inequality.
\end{proof}

\subsection{Coercivity of the energy}
The coercivity property, which is part of the variational characterization of the ground state, plays an important role throughout the proof.  The version we use in this paper is a minor adaptation of the one in \cite{hr:radial} and is informed by our needs when proving Theorem~\ref{main_c}.

\begin{prop}\label{coer} Fix $\lambda>0$  and let $u_0\in H^1_0(\Omega)$ satisfy
\begin{align*}
F^\lambda(u_0)<F^\lambda_*\qtq{and} \|\nabla u_0\|_{L^2(\Omega)}\|u_0\|_{L^2(\Omega)}<\|\nabla Q\|_{L^2(\R^3)}\|Q\|_{L^2(\R^3)}.
\end{align*}
Then the corresponding solution $u$ to \eqref{nls} is global.  Moreover, for all $t\in \R$ we have
\begin{align*}
\|\nabla u(t)\|_{L^2(\Omega)}\|u(t)\|_{L^2(\Omega)}<\|\nabla Q\|_{L^2(\R^3)}\|Q\|_{L^2(\R^3)}
\end{align*}
and
\begin{align*}
\tfrac 16\|\nabla u(t)\|_{L^2(\Omega)}^2\le E(u)\le \tfrac 12\|\nabla u(t)\|_{L^2(\Omega)}^2.
\end{align*}
In particular,
\begin{align*}
F^\lambda(u)\sim \|u(t)\|_{H_0^1}^2.
\end{align*}

Furthermore, with $\delta>0$ such that
\begin{align*}
F^\lambda(u_0)<(1-\delta)F^\lambda_*,
\end{align*}
there exists $c_\delta>0$ such that for all $t\in \R$,
\begin{align*}
&\|\nabla u(t)\|_{L^2(\Omega)}\|u(t)\|_{L^2(\Omega)}<(1-c_\delta)\|\nabla Q\|_{L^2(\R^3)}\|Q\|_{L^2(\R^3)},\\
&4\|\nabla u(t)\|_{L^2(\Omega)}^2-3\|u(t)\|_{L^4(\Omega)}^4\ge c_\delta \|\nabla u(t)\|_{L^2(\Omega)}^2.
\end{align*}
\end{prop}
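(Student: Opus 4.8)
The plan is to reduce everything to the sharp Gagliardo--Nirenberg inequality on $\R^3$ together with a continuity (``unstable hump'') argument, exactly in the spirit of \cite{hr:radial,dhr}; the only new wrinkle is extending functions by zero across the obstacle.

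First I would extend $u(t)$ by zero outside $\Omega$; since $u(t)\in H^1_0(\Omega)$ this gives a function in $H^1(\R^3)$ with the same $L^2$, $L^4$ and $\dot H^1$ norms, so the sharp Gagliardo--Nirenberg inequality $\|f\|_4^4\le C_{GN}\|\nabla f\|_2^3\|f\|_2$ (optimizer $Q$, so $C_{GN}=\|Q\|_4^4/(\|\nabla Q\|_2^3\|Q\|_2)$) applies to $u(t)$. Writing $M_0:=M(u_0)$ (conserved) and $X(t):=\|\nabla u(t)\|_{L^2(\Omega)}^2$, conservation of energy gives
\[
E(u_0)=\tfrac12 X(t)-\tfrac14\|u(t)\|_{L^4}^4\ \ge\ \tfrac12 X(t)-\tfrac{C_{GN}}{4}M_0^{1/2}X(t)^{3/2}.
\]
Substituting $X=y_* z$ with $y_*:=16/(9C_{GN}^2 M_0)$ turns the right-hand side into $y_*\psi(z)$, where $\psi(z):=\tfrac12 z-\tfrac13 z^{3/2}$ is a universal function that increases on $[0,1]$ to its maximum $\psi(1)=\tfrac16$ and decreases thereafter.

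Next I would record the $Q$-identities. The Pohozaev relations for $\Delta Q-Q+Q^3=0$ in $\R^3$ give $\|\nabla Q\|_2^2=\tfrac34\|Q\|_4^4=3\|Q\|_2^2$, hence $E(Q)=\tfrac18\|Q\|_4^4$ and $\|\nabla Q\|_2^2\|Q\|_2^2=6\,E(Q)M(Q)$. From the first of these one checks $y_*M_0=\|\nabla Q\|_2^2\|Q\|_2^2$, so hypothesis \eqref{km} says precisely $z(0)<1$; from the second, $F^\lambda_*=2\sqrt{\lambda E(Q)M(Q)}=2\sqrt{\tfrac{\lambda}{6}\|\nabla Q\|_2^2\|Q\|_2^2}$, and by AM--GM $F^\lambda_*\le\lambda M_0+\tfrac{\|\nabla Q\|_2^2\|Q\|_2^2}{6M_0}=\lambda M_0+y_*\psi(1)$ for every $M_0>0$. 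Therefore $F^\lambda(u_0)<F^\lambda_*$ forces $E(u_0)<y_*\psi(1)$. Now run the continuity argument on the maximal interval $I$: since $u\in C_tH^1_0$, the function $z(t)=X(t)/y_*$ is continuous, $z(0)<1$, and $\psi(z(t))\le E(u_0)/y_*<\psi(1)$ rules out $z(t)=1$, so $z(t)<1$ throughout $I$. Then $\|u(t)\|_{H^1_0}^2=y_*z(t)+M_0$ is bounded on $I$, so the blowup criterion in Theorem~\ref{T:lwp} forces $I=\R$: the solution is global, and $\|\nabla u(t)\|_2^2\|u(t)\|_2^2=y_*z(t)M_0<\|\nabla Q\|_2^2\|Q\|_2^2$ for all $t$. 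The energy bounds are then algebraic: $E(u)=\tfrac12 X-\tfrac14\|u\|_4^4\le\tfrac12 X$ is the upper bound, while $z<1$ is equivalent to $C_{GN}M_0^{1/2}X^{1/2}<\tfrac43$, which rearranges to $\tfrac12 X-\tfrac14\|u\|_4^4\ge\tfrac16 X$, i.e. $E(u)\ge\tfrac16\|\nabla u(t)\|_2^2$; together with $M(u)=M_0$ this gives $F^\lambda(u)\sim_\lambda\|u(t)\|_{H^1_0}^2$.

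For the quantitative statements I would push the stronger hypothesis $F^\lambda(u_0)<(1-\delta)F^\lambda_*$ through the same chain. Using $F^\lambda_*\le\lambda M_0+y_*\psi(1)$ gives $E(u_0)<(1-\delta)y_*\psi(1)-\delta\lambda M_0<(1-\delta)y_*\psi(1)$, hence $\psi(z(t))<(1-\delta)\psi(1)$; since $\psi:[0,1]\to[0,\tfrac16]$ is a fixed increasing homeomorphism this yields $z(t)<1-c_\delta$ for a constant $c_\delta>0$ depending only on $\delta$. Unwinding, $\|\nabla u(t)\|_2^2\|u(t)\|_2^2<(1-c_\delta)\|\nabla Q\|_2^2\|Q\|_2^2$; and since $C_{GN}M_0^{1/2}X^{1/2}=\tfrac43 z^{1/2}<\tfrac43(1-c_\delta)^{1/2}$ we get $4X-3\|u\|_4^4\ge 4X(1-z^{1/2})\ge 2c_\delta X=2c_\delta\|\nabla u(t)\|_2^2$, using $1-\sqrt{1-c}\ge c/2$ on $[0,1]$; relabeling $c_\delta$ gives the claim. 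I do not expect a genuine obstacle here: the substantive points are just the bookkeeping of the $Q$-identities, tracking the $M_0$-dependence through $y_*$, and being careful to extract globality on the maximal interval \emph{before} invoking continuity of $z$ on all of $\R$.
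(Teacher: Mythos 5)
Your proposal is correct, and it is essentially the argument the paper intends: the paper omits the proof of Proposition~\ref{coer}, citing \cite{hr:radial}, and your write-up is precisely that sharp Gagliardo--Nirenberg plus continuity argument, with the extension-by-zero remark handling the obstacle and the AM--GM step $F^\lambda_*\le\lambda M_0+y_*\psi(1)$ cleanly reducing the free-energy hypothesis to the energy bound $E(u_0)<y_*\psi(1)$. All the constants and Pohozaev bookkeeping check out.
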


\section{Linear profile decomposition}

A guiding heuristic of this paper is that parts of a solution living near the obstacle should be controlled by the virial identity localized to this region, while parts of the solution living far from the obstacle ought to be understood in terms of the problem with no obstacle (solved in \cite{dhr}).  The purpose of this section is to develop a linear profile decomposition, which ultimately, will allow us to partition a solution into such parts.  Indeed, the two cases appearing in the next result reflect precisely this dichotomy.

\begin{prop}[Inverse Strichartz inequality]\label{invers}
Assume $ \{f_n\}\subset H^1_0(\Omega)$ satisfies
\begin{align*}
\lim_{n\to \infty}\|f_n\|_{H^1_0(\Omega)}=A<\infty \qtq{and} \lim_{n\to \infty}\|e^{-it\Delta_\Omega}f_n\|_{L_{t,x}^5(\R\times\Omega)}=\eps>0.
\end{align*}
Then there exist a subsequence in $n$, $\{\phi_n\}\subset H_0^1(\Omega)$, $\{t_n\} \subset \R$, and $\{x_n\}\subset\Omega$ conforming to one of the two cases below, such that either $t_n\to \pm\infty$ or $t_n\equiv 0$ and
\begin{align}
\liminf_{n\to\infty}&\|\phi_n\|_{H^1_0(\Omega)}\gtrsim \tfrac {\eps^6}{A^5},\label{pi1}\\
\liminf_{n\to\infty}&\bigl\{\|f_n\|_{\dot H^s_0(\Omega)}^2-\|f_n-\phi_n\|_{\dot H_0^s(\Omega)}^2-\|\phi_n\|_{\dot H^s_0(\Omega)}^2\bigr\}=0 \quad\text{for } s=0, 1,\label{pi2}\\
\liminf_{n\to\infty}&\bigl\{\|f_n\|_{L^4(\Omega)}^4-\|f_n-\phi_n\|_{L^4(\Omega)}^4-\|\phi_n\|_{L^4(\Omega)}^4\bigr\}=0.\label{pi4}
\end{align}
The two cases are as follows:

\noindent \textbf{Case 1:} Along the subsequence, $x_n\to x_\infty\in \Omega$, there is a $\phi\in H^1_0(\Omega)$ so that
$e^{-it_n\Delta_\Omega}f_n\rightharpoonup\phi$ weakly in $H^1_0(\Omega)$, and  $\phi_n:=e^{it_n\Delta_\Omega}\phi$.

\noindent \textbf{Case 2:} Along the subsequence, $d(x_n)\to\infty$, there is a $\tilde \phi\in H^1(\R^3)$ so that
\begin{align*}
[e^{-it_n\Delta_\Omega}f_n](x+x_n)\rightharpoonup\tilde \phi(x) \qtq{weakly in} H^1(\R^3),
\end{align*}
and
\begin{align*}
\phi_n(x):=e^{it_n\Delta_\Omega}[(\chi_n\tilde \phi)(x-x_n)] \qtq{with} \chi_n(x):=\chi\bigl(\tfrac x{d(x_n)}\bigr).
\end{align*}
\end{prop}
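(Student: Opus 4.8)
The plan is to follow the classical concentration-compactness strategy for inverse Strichartz inequalities, adapted to the exterior domain using the convergence results of the previous subsection. First I would produce the bubble of concentration. Since $\|e^{-it\Delta_\Omega}f_n\|_{L^5_{t,x}}\sim\eps$ while $\|f_n\|_{H^1_0}\sim A$, an interpolation between the dispersive-type decay of the linear flow at frequency $N$ and the $L^\infty_tL^2_x\cap L^\infty_t\dot H^1$ control of $f_n$ should yield, after optimizing in $N$, a scale $N_\ast\sim 1$ (there is no scaling here) and a spacetime point $(t_n,x_n)\in\R\times\Omega$ such that
\begin{align*}
\bigl|\langle e^{it_n\Delta_\Omega}\delta_{x_n}, f_n\rangle\bigr| \;=\;\bigl|[e^{-it_n\Delta_\Omega}f_n](x_n)\bigr|\;\gtrsim\;\tfrac{\eps^{6}}{A^{5}}
\end{align*}
— more precisely, one tests $e^{-it\Delta_\Omega}f_n$ against a bump at scale $1$ around $x_n$. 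Passing to a subsequence, either $d(x_n)$ stays bounded or $d(x_n)\to\infty$; in the first case one may further assume $x_n\to x_\infty\in\overline\Omega$, and using the heat kernel estimate (Lemma~\ref{lm:heat}) together with the Dirichlet condition, $x_\infty$ cannot lie on $\partial\Omega$ without the bubble vanishing, so $x_\infty\in\Omega$ — this is Case~1; the second is Case~2. In either case one extracts a further subsequence so that $t_n\to t_\infty$ in $[-\infty,\infty]$; if $t_\infty$ is finite one absorbs $e^{it_\infty\Delta_\Omega}$ and resets $t_n\equiv 0$, otherwise $t_n\to\pm\infty$.

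Next I would define the profile. In Case~1, weak compactness of $\{e^{-it_n\Delta_\Omega}f_n\}$ in $H^1_0(\Omega)$ gives a weak limit $\phi$, and testing against the bump shows $\phi\neq 0$ with $\|\phi\|_{H^1_0}\gtrsim \eps^6/A^5$, giving \eqref{pi1}; set $\phi_n:=e^{it_n\Delta_\Omega}\phi$. In Case~2, translate by $x_n$: $\{[e^{-it_n\Delta_\Omega}f_n](\cdot+x_n)\}$ is bounded in $H^1(\R^3)$ (here one uses that $\Omega_n:=\Omega-\{x_n\}$ exhausts $\R^3$ and the uniform $H^1$ bounds), extract a weak limit $\tilde\phi\in H^1(\R^3)$, again nonzero by the lower bound on the bump, and define $\phi_n$ by truncating $\tilde\phi$ with $\chi_n=\chi(\cdot/d(x_n))$, translating back, and evolving — the truncation is needed to land in $H^1_0(\Omega)$, and Lemma~\ref{lm:often} guarantees $\|(1-\chi_n)\tilde\phi\|_{H^1}\to 0$ so nothing is lost asymptotically; \eqref{pi1} then follows from $\|\chi_n\tilde\phi\|_{H^1}\to\|\tilde\phi\|_{H^1}$ and unitarity.

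The decoupling statements \eqref{pi2} and \eqref{pi4} are then a matter of weak convergence plus the refined Fatou lemma (Lemma~\ref{lm:rf}). For \eqref{pi2} with $s=0,1$: writing $g_n:=e^{-it_n\Delta_\Omega}f_n$, unitarity of $e^{it_n\Delta_\Omega}$ on $\dot H^s_0(\Omega)$ reduces matters to $\|g_n\|^2_{\dot H^s}-\|g_n-\tilde\phi_n\|^2_{\dot H^s}-\|\tilde\phi_n\|^2_{\dot H^s}\to 0$ where $\tilde\phi_n$ is $\phi$ (Case~1) or $(\chi_n\tilde\phi)(\cdot-x_n)$ (Case~2); since $g_n\rightharpoonup\phi$ (resp. $g_n(\cdot+x_n)\rightharpoonup\tilde\phi$) weakly in $H^1$ — using Proposition~\ref{lim} to see that if $t_n\to\pm\infty$ or $|x_n|\to\infty$ one still has weak convergence of the relevant object — the cross term $\langle g_n,\tilde\phi_n\rangle_{\dot H^s}\to\|\tilde\phi_n\|^2_{\dot H^s}$ and the Hilbert-space identity closes it (in Case~2 one first replaces $\chi_n\tilde\phi$ by $\tilde\phi$ at the cost of an $o(1)$ error via Lemma~\ref{lm:often}). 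For \eqref{pi4} one needs pointwise a.e. convergence: after translating, $g_n(\cdot+x_n)\to\tilde\phi$ a.e. along a further subsequence by Rellich, so $|g_n|^4-|g_n-\phi_n|^4-|\phi_n|^4\to 0$ in $L^1$ by Lemma~\ref{lm:rf}. The genuinely delicate point — and the one I expect to be the main obstacle — is Case~2 when $t_n\to\pm\infty$: here the linear evolution genuinely disperses, and one must show the $L^4$ contribution of $\phi_n$ itself vanishes so that \eqref{pi4} holds with all three terms going to zero individually for the profile piece; this is exactly the content of the weak dispersive estimate (Lemma~\ref{lm:wd}), which must be invoked to kill $\|e^{it_n\Delta_{\Omega_n}}\psi\|_{L^4}$ for truncations $\psi$ of $\tilde\phi$, and then one passes from $\psi$ to $\tilde\phi$ by density in $H^1$. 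Keeping careful track of which frame (the $\Omega$ frame versus the translated $\Omega_n\to\R^3$ frame) each weak limit and each Strichartz/dispersive bound lives in, and ensuring the truncation errors are uniformly $o(1)$, is where the bookkeeping is heaviest.
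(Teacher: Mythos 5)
Your proposal follows essentially the same route as the paper: frequency localization to a medium band of scale depending only on $\eps$ and $A$, interpolation to locate a concentration point $(t_n,x_n)$, the heat kernel estimate to force $d(x_n)\gtrsim_{\eps,A}1$ and hence $x_\infty\in\Omega$ in Case 1, weak limits in the appropriate frame with the $\chi_n$ truncation in Case 2, Hilbert-space decoupling for $\dot H^s$, Rellich plus refined Fatou for $L^4$ when $t_n\equiv 0$, and the weak dispersive estimate when $t_n\to\pm\infty$. This matches the paper's proof in all essential respects (the paper's pointwise lower bound at the concentration point is $\eps^3/A^2$, with $\eps^6/A^5$ emerging only after pairing with the approximate delta, but this is a bookkeeping detail).
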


\begin{proof} Let $\delta>0$ be a small number to be chosen later.  Using the Bernstein and Strichartz inequalities, we get
\begin{align*}
&\|e^{-it\Delta_\Omega}\po_{\le \delta\eps^2}f_n\|_{\lt}\lsm \delta^{\frac 12}\eps\|e^{-it\Delta_{\Omega}}f_n\|_{L_t^5L_x^{\frac{30}{11}}(\ro)}\lsm \delta^{\frac 12}\eps\|f_n\|_{L^2(\Omega)}\lsm \delta^{\frac 12}\eps A, \\
&\|e^{-it\Delta_\Omega}\po_{>(\delta\eps^2)^{-1}}f_n\|_{\lt}\lsm \delta^{\frac 12}\eps\|(-\Delta_\Omega)^{\frac 12}e^{-it\Delta_\Omega} f_n\|_{L_t^5L_x^{\frac{30}{11}}(\ro)}\lsm \delta^{\frac12}\eps A.
\end{align*}
Taking $\delta$ small enough so that $\delta^{\frac 12}A\ll 1$, we deduce that
\begin{align*}
\|e^{-it\Delta_\Omega} \po_{med} f_n\|_{\lt}\ge \tfrac {\eps}2,
\end{align*}
where $\po_{med}:=\po_{\delta\eps^2<\cdot\le(\delta\eps^2)^{-1}}$.  Using this, H\"older, and Strichartz, we obtain
\begin{align*}
\tfrac \eps 2\le \|e^{-it\Delta_\Omega}\po_{med}f_n\|_{L_{t,x}^{\frac{10}3}(\ro)}^{\frac 23}\|e^{-it\Delta_\Omega}\po_{med}f_n\|_{L_{t,x}^{\infty}(\ro)}^{\frac 13}
\lsm A^{\frac 23}\|e^{-it\Delta_\Omega}\po_{med}f_n\|_{L_{t,x}^{\infty}(\ro)}^{\frac 13}.
\end{align*}
Therefore, there exist $(t_n, x_n)\in \ro$ such that
\begin{align}\label{lbb}
\bigl |\prdnn \po_{med} f_n(x_n)\bigr|\gtrsim \tfrac{\eps^3}{A^2}.
\end{align}

Next we prove that
\begin{align}\label{lbxn}
\inf_n d(x_n)\gtrsim_{\eps, A}1.
\end{align}
Indeed, using Lemma~\ref{lm:heat} we see that
\begin{align}
\int_{\Omega} |e^{\ld}(x_n, y)|^2 \,dy&\lsm \int_\Omega\bigl |d(x_n)[d(x_n)+|x_n-y|]e^{-c|x_n-y|^2}\bigr|^2 \,dy\label{nn}\\
&\lsm d(x_n)^2[d(x_n)+1]^2. \notag
\end{align}
On the other hand, writing
\begin{align*}
\prdnn\po_{med}f_n(x_n)=\int_\Omega e^{\ld}(x_n,y)e^{-(it_n+1)\ld}\po_{med}f_n(y)\,dy
\end{align*}
and using \eqref{lbb}, \eqref{nn}, Cauchy--Schwarz, and the Mikhlin multiplier theorem, we get
\begin{align*}
\tfrac{\eps^3}{A^2}&\lsm d(x_n)[d(x_n)+1]\bigl\|e^{-(it_n+1)\ld}\po_{\le (\delta\eps^2)^{-1}}f_n\|_{L^2(\Omega)}\lesssim_{\eps, A} d(x_n)[d(x_n)+1].
\end{align*}
This leads directly to \eqref{lbxn}.

Passing to a subsequence, we may assume that $x_n$ converges to a point in $\Omega$ or marches off to infinity.  (Note that $x_n$ cannot converge to a point in $\partial\Omega$ by virtue of \eqref{lbxn}.) We use this criterion to distinguish the two cases:

\noindent\textbf{Case 1:} $d(x_n)\sim 1$ and $x_n\to x_\infty\in \Omega$,\\
\noindent\textbf{Case 2:} $d(x_n)\to \infty. $

In both cases we define
\begin{align*}
g_n(x):=[e^{-it_n\ld}f_n](x+x_n).
\end{align*}
Obviously, $g_n$ is supported on $\Omega_n:=\Omega-\{x_n\}$ and $\|g_n\|_{H^1_0(\on)}=\|f_n\|_{H^1_0(\Omega)}\le A$.  Passing to a further subsequence if necessary, we can choose $\tilde \phi\in H^1(\R^3)$ so that
\begin{align}\label{gnto}
g_n\rightharpoonup\tilde \phi\ \mbox{ weakly in } H^1(\R^3).
\end{align}

In Case 1, using that $x_n\to x_\infty$ and \eqref{gnto}, we obtain
\begin{align*}
\prdnn f_n\rightharpoonup\phi(x):=\tilde \phi(x-x_\infty) \ \mbox{weakly in } \hoo.
\end{align*}
That $\phi$ belongs to $\hoo$ follows from the fact that $\hoo$ is weakly closed in $H^1(\R^3)$.

Next we observe that by passing to a further subsequence if necessary, we may assume that $t_n\to t_0\in[-\infty, \infty]$.  The possibilities $t_0 = \pm\infty$ are permitted in the statement of the proposition; however, when $t_0\in\R$ we need to show that judicious changes allow us to take $t_n\equiv0$.  In Case 1, we may take $t_n\equiv 0$ by replacing $\phi$ by $e^{it_0\Delta_\Omega}\phi$ and invoking the strong convergence of the linear propagator.  In Case 2, replacing $\tilde\phi$ by $e^{it_0\Delta_{\R^3}}\tilde \phi$ we may also take $t_n\equiv 0$; indeed, the claim boils down to the assertion\begin{align}\label{318}
\lim_{n\to \infty}\bigl\|e^{it_n\Delta_\Omega}\tau_{x_n} [\chi_n\tilde\phi] - \tau_{x_n}[\chi_n e^{it_0\Delta_{\R^3}}\tilde\phi]\bigr\|_{H^1_0(\Omega)}=0.
\end{align}

By a density argument, it suffices to prove \eqref{318} with $\tilde \phi$ replaced by $\psi\in C_c^\infty(\R^3)$.  Note that for $n$ sufficiently large we have $\chi_n\psi=\psi$. Using this and a change of variables, we reduce \eqref{318} to
\begin{align}\label{321}
\lim_{n\to \infty}\bigl\|e^{it_n\Delta_{\Omega_n}}\psi - \chi_n e^{it_0\Delta_{\R^3}}\psi\bigr\|_{H^1_0(\Omega_n)}=0.
\end{align}
We prove \eqref{321} by breaking it into three pieces.  First, by taking the time derivative, we have
$$
 \bigl\|e^{it_n\Delta_{\Omega_n}} \psi - e^{it_0\Delta_{\Omega_n}}\psi \bigr\|_{H^1(\R^3)} \leq | t_n - t_0 | \| \Delta \psi \|_{H^1(\R^3)} \to 0 \qtq{as} n\to \infty.
$$
Secondly, by the first part of Proposition~\ref{conv},
$$
e^{it_0\Delta_{\Omega_n}}\psi \to e^{it_0\Delta_{\R^3}}\psi \quad \text{strongly in } H^1(\R^3) \text{ as }  n\to\infty.
$$
Thirdly, Lemma~\ref{lm:often} yields
\begin{align*}
\bigl\| (1 - \chi_n) e^{it_0\Delta_{\R^3}} \psi \bigr\|_{H^1(\R^3)} \to 0 \qtq{as} n\to\infty,
\end{align*}
which completes the proof of \eqref{321}.

It remains to prove \eqref{pi1} through \eqref{pi4}.  We discuss the two cases separately.

\smallskip

\noindent\textbf{Case 1:}  To prove \eqref{pi1} in this case amounts to showing that $\phi$ is nontrivial.  Let $h:=\po_{med}\delta(x_\infty)$. First we note that
 \begin{align}\label{h bounds}
 \|h\|_{L^2(\Omega)}\lsm (\delta\eps^2)^{-\frac 32} \qtq{and} \|h\|_{L^{\frac 54}(\Omega)}\lsm (\delta\eps^2)^{-\frac 35}.
 \end{align}
To continue, we write
\begin{align}
\langle \phi, h\rangle&=\lim_{n\to\infty}\langle \prdnn f_n, h\rangle\notag\\
&=\lim_{n\to\infty} \prdnn\po_{med}f_n(x_n)-\lim_{n\to\infty}\langle \prdnn f_n, \po_{med}[\delta(x_n)-\delta(x_\infty)]\rangle. \label{fih1}
\end{align}
The second limit vanishes. Indeed, basic elliptic theory shows that
\begin{align*}
\|\nabla v\|_{L^\infty(|x|\le R)}\lsm R^{-1}\|v\|_{L^\infty(|x|\le 2R)}+R\|\Delta v\|_{L^\infty(|x|\le 2R)}.
\end{align*}
We will apply this to $v(x):=[\po_{med}\prdnn f_n](x+x_n)$ with $R:=\frac 14 d(x_n)$.  Note that with this choice,
\begin{align*}
\|v\|_{L^\infty(|x|\leq 2R)}\lsm (\delta\eps^2)^{-\frac 32} A \qtq{and} \|\Delta v\|_{L^\infty(|x|\le 2R)}\lsm (\delta\eps^2)^{-\frac 72} A.
\end{align*}
We thus obtain
$$
\|\nabla [\prdnn \po_{med} f_n](x+x_n)\|_{L^{\infty}(|x|\le \frac{d(x_n)}4)}\lsm(\delta\eps^2)^{-\frac 72} A.
$$
Using this and the fundamental theorem of calculus, for $n$ large we get
\begin{align*}
\bigl |\langle \prdnn f_n, &\po_{med}[\delta(x_n)-\delta(x_\infty)]\rangle\bigr|\\
&\lsm |x_n-x_\infty|\|\nabla \prdnn \po_{med} f_n(x+x_n)\|_{L^{\infty}(|x|\le \frac{d(x_n)}4)}\\
&\lsm |x_n-x_\infty|(\delta\eps^2)^{-\frac 72} A,
\end{align*}
which converges to zero as $n\to \infty$.  Thus, by \eqref{lbb}, \eqref{h bounds}, and \eqref{fih1},
\begin{align*}
\tfrac{\eps^3}{A^2}\lsm|\langle \phi, h\rangle|\lsm\|h\|_{L^2(\Omega)} \|\phi\|_{L^2(\Omega)}\lsm  (\delta\eps^2)^{-\frac 32}\|\phi\|_{L^2(\Omega)},
\end{align*}
which completes the proof of \eqref{pi1}.

The decoupling in $\dot H^s_0(\Omega)$ for $s=0,1$ follows easily from the fact that these are Hilbert spaces.

It remains to prove \eqref{pi4}.  First we discuss the case when $t_n\to \pm \infty$.  By the triangle inequality and Lemma~\ref{lm:wd}, in this case we have
$$
\bigl| \|f_n\|_{L^4}-\|f_n-\phi_n\|_{L^4}\bigr| \leq \|\phi_n\|_{L^4} \to 0 \qtq{as} n\to \infty,
$$
which leads immediately to \eqref{pi4}.

We now turn to the case when $t_n\equiv0$.  By the construction of the linear profiles, we have that $f_n\rightharpoonup \phi$ weakly in $H^1(\R^3)$.  Thus, using Rellich--Kondrashov and passing to a subsequence, we obtain that $f_n\to \phi$ almost everywhere on $\R^3$.  The claim now follows from Lemma~\ref{lm:rf}.

\smallskip

\noindent\textbf{Case 2:} $d(x_n)\to\infty$ as $n\to \infty$.  Recall that in this case,
\begin{align*}
\phi_n(x)=\prdn[(\chi_n\tilde\phi)(x-x_n)] \qtq{with} \chi_n(x)=\chi\bigl(\tfrac{x}{d(x_n)}\bigr).
\end{align*}

We first prove the lower bound \eqref{pi1}.  By Lemma \ref{lm:often},
\begin{align*}
\|\phi_n\|_{\hoo}=\|\chi_n\tilde \phi\|_{H^1_0(\Omega_n)}\to \|\tilde\phi\|_{H^1(\R^3)}.
\end{align*}
Thus, \eqref{pi1} will follow from the following expression of the non-triviality of $\tilde \phi$:
\begin{align}\label{ltb}
\|\tilde \phi\|_{\hr}\gtrsim \tfrac{\eps^6}{A^2}.
\end{align}
To show \eqref{ltb}, we first note that $h:= P_{med}^{\R^3}\delta(0)$ satisfies the estimates in \eqref{h bounds}.  Next, we write
\begin{align*}
\langle \tilde\phi, h\rangle =\lim_{n\to \infty}\langle g_n, h\rangle
=\lim_{n\to\infty}\langle g_n, P_{med}^{\Omega_n}\delta(0)\rangle+\lim_{n\to \infty}\langle g_n, (P_{med}^{\R^3}-P_{med}^{\Omega_n})\delta(0)\rangle.
\end{align*}
The last limit vanishes by Proposition \ref{conv} and the uniform boundedness of $g_n$. Therefore, by \eqref{lbb},
\begin{align}\label{lb2}
|\langle \tilde\phi, h\rangle|\gtrsim \tfrac{\eps^3}{A^5},
\end{align}
from which \eqref{ltb} follows immediately by Cauchy--Schwarz.

To prove decoupling in $\dot H^s_0(\Omega)$ we write
\begin{align*}
&\|f_n\|_{\dot H^s_0(\Omega)}^2-\|f_n-\phi_n\|_{\dot H^s_0(\Omega)}^2-\|\phi_n\|_{\dot H^s_0(\Omega)}^2\\
&=2 \Re\langle f_n-\phi_n,\phi_n\rangle_{\dot H^s_0(\Omega)}\\
&=2 \Re\langle g_n-\chi_n\tilde \phi,\chi_n\tilde\phi\rangle_{\dot H^s_0(\Omega_n)}\\
&=2 \Re\langle g_n-\tilde\phi,\tilde \phi\rangle_{\dot H^s(\R^3)} + 2 \Re\langle (1-\chi_n)\tilde\phi,\chi_n\tilde \phi\rangle_{\dot H^s(\R^3)} - 2 \Re\langle g_n-\tilde\phi,(1-\chi_n)\tilde \phi\rangle_{\dot H^s(\R^3)}.
\end{align*}
Claim \eqref{pi2} follows from this by using \eqref{gnto}, the uniform boundedness of $g_n$ in $H^1_0(\Omega_n)$, and Lemma \ref{lm:often}.

We now turn to \eqref{pi4}.  If $t_n\to \pm \infty$, by the triangle inequality, Gagliardo--Nirenberg, and Lemmas~\ref{lm:often} and \ref{lm:wd}, we have
$$
\bigl| \|f_n\|_{L^4}-\|f_n-\phi_n\|_{L^4}\bigr| \leq \|\phi_n\|_{L^4}\lsm \|e^{it_n\Delta_{\Omega_n}} \tilde\phi\|_{L^4} + \| (1-\chi_n)\tilde\phi\|_{H^1} \to 0 \qtq{as} n\to \infty,
$$
which leads immediately to \eqref{pi4}.

Next we consider the case when $t_n\equiv0$.  From the construction of the linear profiles, we have $f_n(x+x_n)\rightharpoonup\tilde \phi(x)$ weakly in $H^1(\R^3)$.  Thus, using Rellich--Kondrashov and passing to a subsequence we deduce that $f_n(x+x_n)\to\tilde \phi(x)$ almost everywhere on $\R^3$.  Lemma~\ref{lm:rf} then gives
$$
\|f_n\|_{L^4}^4- \|f_n- \tau_{x_n}\tilde\phi\|_{L^4}^4 - \|\tilde\phi\|_{L^4}^4\to 0 \qtq{as} n\to \infty.
$$
Combining this with Lemma~\ref{lm:often} and Gagliardo--Nirenberg yields \eqref{pi4} in this case.

This completes the proof of Proposition~\ref{invers}.
\end{proof}

We are now ready to state the linear profile decomposition for bounded sequences in $H^1_0(\Omega)$.

\begin{thm}[A linear profile decomposition in $H^1_0(\Omega)$] \label{thm:lf} Let $\{f_n\}$ be a bounded sequence in $H^1_0(\Omega)$.  After passing to a subsequence, there exist $J^*\in\{0, 1, 2,\ldots,\infty\}$, $\{\phi_n^j\}_{j=1}^{J^*}\subset \hoo$, $\{t_n^j\}_{j=1}^{J^*}\subset \R$ such that for each $j$ either $t_n^j\equiv 0 $ or $t_n^j\to\pm\infty$, and $\{x_n^j\}_{j=1}^{J^*}\subset \Omega$ conforming to one of the following two cases for each~$j$:

\noindent\textbf{Case 1:} $x_n^j\to x_\infty^j\in \Omega$ and there exists $\phi^j\in \hoo$ so that $\phi_n^j:=e^{it_n^j\ld}\phi^j.$

\noindent\textbf{Case 2:} $d(x_n^j)\to \infty$ and there exists $\phi^j\in H^1(\R^3)$ so that
\begin{align*}
\phi_n^j&:=e^{it_n^j\ld}[(\chi_n^j\phi^j)(x-x_n^j)]=\tau_{x_n^j}[e^{it_n^j\Delta_{\Omega_n^j}}(\chi_n^j\phi^j)] \qtq{with}\chi_n^j(x)=\chi\bigl(\tfrac{x}{d(x_n^j)}\bigr).
\end{align*}

Moreover, for any finite $0\le J\le J^*$ we have the decomposition
\begin{align*}
f_n=\sum_{j=1}^J\phi_n^j+w_n^J
\end{align*}
with $w_n^J\in \hoo$ satisfying
\begin{align}
&\lim_{J \to J^*}\limsup_{n\to \infty}\|\propagateomega w_n^J\|_{\lt}=0,\notag\\
&\lim_{n\to\infty}\bigl\{M(f_n)-\sum_{j=1}^J M(\phi_n^j)-M(w_n^J)\bigr\}=0,\notag\\
&\lim_{n\to \infty}\bigl\{E(f_n)-\sum_{j=1}^J E(\phi_n^j)-E(w_n^J)\bigr\}=0,\notag\\
&[e^{-it_n^J\Delta_{\Omega}}w_n^J](x+x_n^J)\rightharpoonup 0 \qtq{weakly in} H^1(\R^3),\notag\\
&\lim_{n\to \infty}|x_n^j-x_n^k|+|t_n^j-t_n^k|=\infty \qtq{for each} j\neq k.\label{lp5}
\end{align}
\end{thm}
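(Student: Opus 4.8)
The plan is to iterate the Inverse Strichartz inequality (Proposition~\ref{invers}) in the standard Bahouri--G\'erard/Keraani fashion, peeling off one profile at a time. First I would set $w_n^0 := f_n$ and, assuming inductively that $w_n^{J-1}$ has been constructed with $\|w_n^{J-1}\|_{H^1_0(\Omega)}$ bounded, apply Proposition~\ref{invers} to the sequence $\{w_n^{J-1}\}$. If $\limsup_n\|e^{-it\Delta_\Omega}w_n^{J-1}\|_{\lt}=0$ we stop and set $J^*=J-1$; otherwise the proposition produces (along a subsequence) a profile $\phi_n^J$, parameters $(t_n^J,x_n^J)$ falling into Case~1 or Case~2, the weak limit $\phi^J$, and we define $w_n^J:=w_n^{J-1}-\phi_n^J$. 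The weak-convergence property $[e^{-it_n^J\Delta_\Omega}w_n^J](\cdot+x_n^J)\rightharpoonup 0$ is immediate from the construction, since $\phi^J$ is by definition the weak limit of $[e^{-it_n^J\Delta_\Omega}w_n^{J-1}](\cdot+x_n^J)$ and the profile $\chi_n^J\phi^J$ (resp.\ $\phi^J$) converges weakly to the same thing by Lemma~\ref{lm:often} and Proposition~\ref{conv}.

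Next I would establish the energy decoupling. The $\dot H^s_0(\Omega)$ decoupling for $s=0,1$ at each stage is exactly \eqref{pi2}, and summing the telescoping identity over $j=1,\dots,J$ gives, in the limit, $M(f_n)=\sum_j M(\phi_n^j)+M(w_n^J)+o(1)$ and $\|\nabla f_n\|_2^2 = \sum_j\|\nabla\phi_n^j\|_2^2+\|\nabla w_n^J\|_2^2+o(1)$; similarly the $L^4$ decoupling \eqref{pi4} iterates to $\|f_n\|_{L^4}^4=\sum_j\|\phi_n^j\|_{L^4}^4+\|w_n^J\|_{L^4}^4+o(1)$, and combining the gradient and $L^4$ statements yields the energy decoupling. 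A standard consequence of the $\dot H^1_0$ (equivalently $H^1_0$) decoupling and the lower bound \eqref{pi1} is that $\sum_j \bigl(\tfrac{\eps_j^6}{A_j^5}\bigr)^2 \lesssim \sup_n\|f_n\|_{H^1_0(\Omega)}^2 <\infty$ where $\eps_j=\lim_n\|e^{-it\Delta_\Omega}w_n^{j-1}\|_{\lt}$ and $A_j=\lim_n\|w_n^{j-1}\|_{H^1_0(\Omega)}$; since $A_j$ stays bounded, this forces $\eps_J\to 0$ as $J\to J^*$, which is precisely the statement $\lim_{J\to J^*}\limsup_n\|e^{it\Delta_\Omega}w_n^J\|_{\lt}=0$. (One also needs the harmless observation that the Strichartz norm of $e^{it\Delta_\Omega}\phi_n^j$ is bounded below away from zero, so the process genuinely terminates or exhausts.)

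The main obstacle is the asymptotic orthogonality of parameters \eqref{lp5}: $|x_n^j-x_n^k|+|t_n^j-t_n^k|\to\infty$ for $j\neq k$. This is proved by contradiction and downward induction on the stage at which $(t^k,x^k)$ is extracted relative to $(t^j,x^j)$: supposing along a subsequence $x_n^j-x_n^k\to y_\infty$ and $t_n^j-t_n^k\to s_\infty$, one shows that $\phi_n^k$ would then contribute nontrivially to the weak limit defining $\phi_n^j$, contradicting either $\phi^j$ being the full weak limit at stage $j$ or the vanishing weak-limit property $[e^{-it_n^j\Delta_\Omega}w_n^{j-1}](\cdot+x_n^j)\rightharpoonup 0$ (note $w_n^{j-1}$ contains $\phi_n^k$ as a summand when $k<j$). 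Making this rigorous requires the two parts of Proposition~\ref{lim}: part~1 handles the case $|t_n^j-t_n^k|+|x_n^j-x_n^k|\to\infty$ showing the cross terms vanish, and part~2 propagates weak convergence through the (shifted, time-translated) linear flow when the time parameters stay bounded; when the geometries differ (one profile near the obstacle, one far away, or both far away at separating locations) one also invokes Proposition~\ref{conv} to pass to the limiting flat-space flow. One must be careful to treat separately the sub-cases according to whether $t_n^j\equiv 0$ or $t_n^j\to\pm\infty$ and whether $d(x_n^j)\to\infty$ or $x_n^j\to x_\infty^j$, but in each the argument reduces to the weak-convergence lemmas already in hand. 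Finally, a diagonal subsequence argument over $J$ delivers the decomposition for all finite $J\le J^*$ simultaneously.
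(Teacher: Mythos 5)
Your proposal is correct and follows essentially the same route as the paper, which itself only sketches the argument: iterate Proposition~\ref{invers} to extract one profile at a time, telescope the decoupling statements, deduce the vanishing of the remainder's Strichartz norm from the $\dot H^1$ decoupling together with the lower bound \eqref{pi1}, and use Proposition~\ref{lim} (plus Proposition~\ref{conv} when the limiting geometries differ) to establish the asymptotic orthogonality \eqref{lp5}. The paper simply cites the standard Euclidean argument and the $\dot H^1_0(\Omega)$ analogue in \cite{kvz:quintic} for the details you spell out.
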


\begin{proof}
Proposition~\ref{invers} provides all the key estimates.  With this in place, one may just repeat the well-known argument from the Euclidean setting.  Specifically, one argues inductively, using Proposition~\ref{invers} to remove one bubble of concentration at a time.  Proposition~\ref{lim} is needed to prove the asymptotic decoupling statement \eqref{lp5}.  For the proof of a linear profile decomposition for bounded sequences in $\dot H^1_0(\Omega)$, where $\Omega$ is the complement of a compact, smooth, strictly convex obstacle, see \cite{kvz:quintic}.
\end{proof}

\section{Embedding of nonlinear profiles}

The next major milestone in the proof of Theorem~\ref{T:main} is to use the linear profile decomposition obtained in the previous section to derive a Palais--Smale condition for minimizing sequences of blowup solutions to \eqref{nls}.  This amounts to proving a nonlinear profile decomposition for solutions to \eqref{nls}, which combined with Lemma~\ref{lm:stability} yields the desired compactness for minimizing sequences of solutions.  In order to prove a nonlinear profile decomposition for solutions to \eqref{nls}, we have to address the possibility that the nonlinear profiles we will extract are solutions to the focusing cubic NLS in a different limiting geometry, namely, the Euclidean space $\R^3$.  In this section, we will see how to embed the nonlinear profiles which solve $\text{NLS}_{\R^3}$ back inside $\Omega$.  Specifically, we need to approximate these profiles \emph{globally in time} by solutions to \eqref{nls} that satisfy \emph{uniform} spacetime bounds.

Throughout this section we use the notation
$$
X^1(I):= L^5_tH^{1,\frac{30}{11}}(I\times\R^3),
$$
where $I\subseteq\R$ is an arbitrary time interval.  By Sobolev embedding, this norm controls $L^5_{t,x}$.  Note also that this is a space to which Theorem~\ref{T:equiv} applies.

 \begin{thm}\label{embed} Let $\{t_n\}\subset \R$ be such that $t_n\equiv 0$ or $t_n\to \pm\infty$. Let $\{x_n\}\subset \Omega$ be such that $d(x_n)\to \infty$.   Assume $\phi\in H^1(\R^3)$ satisfies
\begin{align}\label{cond}
\|\nabla \phi\|_{L^2(\R^3)}\|\phi\|_{L^2(\R^3)}<\|\nabla Q\|_{L^2(\R^3)}\|Q\|_{L^2(\R^3)}  \qtq{and} F^\lambda(\phi)<F^\lambda_*,
\end{align}
for some $0<\lambda<\infty$. Define
\begin{align*}
\phi_n:=e^{it_n\ld}[\tau_{x_n}(\chi_n\phi)].
\end{align*}
Then for $n$ sufficiently large, there exists a global solution $v_n$ to \eqref{nls} with initial data $v_n(0)=\phi_n$ which satisfies
\begin{align*}
\|v_n\|_{L_{t,x}^5(\R\times\Omega)}\lsm 1,
\end{align*}
with the implicit constant depending only on $\|\phi\|_{H^1(\R^3)}$.  Furthermore, for any $\eps>0$ there exists $N_\eps\in \N$ and $\psi_\eps\in C_c^\infty(\R\times\R^3)$ such that for all $n\ge N_\eps$,
\begin{align}\label{app}
\|v_n(t-t_n, x+x_n)-\psi_{\eps}(t,x)\|_{X^1(\R)}<\eps.
\end{align}
\end{thm}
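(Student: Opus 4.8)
The plan is to build the approximate solutions $v_n$ by solving $\text{NLS}_{\R^3}$ with the limiting data, then transplant that solution into $\Omega$ via the cutoff and the propagator change, controlling the errors with the stability lemma (Lemma~\ref{lm:stability}) and the convergence-of-domains results (Proposition~\ref{conv}).

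First I would dispose of the Euclidean problem. By Corollary~\ref{C:product rule} and the hypothesis \eqref{cond}, the data $\phi$ lies below the ground-state threshold, so by the Euclidean theory of \cite{dhr} there is a global solution $w$ to $\text{NLS}_{\R^3}$ with $w(0)=\phi$ (when $t_n\equiv0$) and global $X^1(\R)$ bounds depending only on $\|\phi\|_{H^1(\R^3)}$; when $t_n\to\pm\infty$ one instead takes $w$ to be the solution scattering to $e^{it\Delta}\phi$ as $t\to\pm\infty$, again with global $X^1$ bounds. In either case, since $w\in X^1(\R)$ and $C_tH^1$, a standard density argument gives, for each $\eps>0$, a function $\psi_\eps\in C_c^\infty(\R\times\R^3)$ with $\|w - \psi_\eps\|_{X^1(\R)}<\eps/2$; this is the $\psi_\eps$ that will appear in \eqref{app}, so proving \eqref{app} reduces to showing $\|v_n(\cdot-t_n,\cdot+x_n) - w\|_{X^1(\R)}\to0$.

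Next I would construct the approximate solution on $\Omega$. The natural ansatz is $\tilde v_n(t,x) := \tau_{x_n}\bigl[\chi_n(\cdot)\,w(t-t_n,\cdot)\bigr]$ (or a lightly truncated-in-time variant to keep $C_c^\infty$ approximations clean), which has $\tilde v_n(0) = e^{-it_n\Delta}$-close to $\phi_n$ after accounting for the propagator swap; here Proposition~\ref{conv} (first assertion) and Lemma~\ref{lm:often} give $\|\tilde v_n(0)-\phi_n\|_{H^1_0(\Omega)}\to0$, exactly as in the derivation of \eqref{318}--\eqref{321} already carried out in the proof of Proposition~\ref{invers}. One then computes the error $e_n := (i\partial_t + \Delta_\Omega)\tilde v_n + |\tilde v_n|^2\tilde v_n$. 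Because $w$ solves the Euclidean equation and $\chi_n\equiv1$ on $|x-x_n|\le d(x_n)/4\to\infty$, the error is supported where $\chi_n\ne1$, i.e. at spatial scales $|x-x_n|\gtrsim d(x_n)$, and consists of commutator terms $[\Delta,\chi_n]w$ (size $d(x_n)^{-1}$ and $d(x_n)^{-2}$ against derivatives of $\chi$) plus the nonlinearity defect $(|\chi_n w|^2\chi_n w - \chi_n|w|^2 w)$, all measured in $N^1(\R)$. Each piece is controlled using Lemma~\ref{lm:often} together with the spacetime integrability of $w$ on the region $|x|\gtrsim R$ for $R\to\infty$ (a consequence of $w\in X^1(\R)\cap C_tH^1$ and monotone convergence); one also needs that $e^{it\Delta}$ and $e^{it\Delta_\Omega}$ agree asymptotically in the relevant Strichartz norms, which is again Proposition~\ref{conv}. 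This forces $\|e_n\|_{N^1(\R)}\to0$.

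With $\|\tilde v_n(0)-\phi_n\|_{H^1_0(\Omega)}\to0$ and $\|e_n\|_{N^1(\R)}\to0$, and with the a priori bounds $\|\tilde v_n\|_{L^\infty_tH^1_0}\lesssim\|\phi\|_{H^1}$ and $\|\tilde v_n\|_{L^5_{t,x}}\lesssim1$ coming from the Euclidean bounds on $w$, Lemma~\ref{lm:stability} applies for all $n$ large: it produces the genuine solution $v_n$ to \eqref{nls} with $v_n(0)=\phi_n$, global (since the $L^5_{t,x}$ bound is uniform in the interval, and the blowup criterion of Theorem~\ref{T:lwp} is thereby excluded), with $\|v_n\|_{L^5_{t,x}(\R\times\Omega)}\lesssim1$, and with $\|v_n-\tilde v_n\|_{X^1(\R)}\to0$. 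Undoing the translation and combining with $\|\chi_n w - w\|_{X^1(\R)}\to0$ (Lemma~\ref{lm:often}, in the form that multiplication by $1-\chi_n$ kills $X^1$ norms since $w$ concentrates at bounded scales in a quantitative sense) and with the triangle inequality against $\psi_\eps$ yields \eqref{app}. The main obstacle is the middle step: verifying that the spatially-truncated transplant of $w$ is a good approximate solution in the correct function spaces --- one must show both the commutator/nonlinearity error vanishes in $N^1(\R)$ and that the a priori $L^5_{t,x}$ control survives globally in time, which requires carefully exploiting the decay of $w$ far from the (receding) obstacle rather than any smallness of $\phi$ itself.
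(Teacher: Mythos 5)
Your overall architecture matches the paper's: solve $\text{NLS}_{\R^3}$ below the threshold using \cite{dhr,hr:radial}, transplant the Euclidean solution into $\Omega$ via $\tau_{x_n}\chi_n$, verify that the transplant is an approximate solution, and close with Lemma~\ref{lm:stability}. But there is a genuine gap in the middle step, namely in your claim that the commutator error $2\nabla\chi_n\cdot\nabla w+(\Delta\chi_n)w$ is controlled in $N^1(\R)$ by Lemma~\ref{lm:often} together with the spatial localization of $\nabla\chi_n$. The $N^1$ norm requires one full derivative of the error in a dual Strichartz space, so the term $\nabla\chi_n\cdot\nabla w$ produces $\nabla\chi_n\cdot\nabla^2 w$, i.e.\ \emph{two} derivatives of $w$. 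A solution $w$ of $\text{NLS}_{\R^3}$ with data $\phi\in H^1(\R^3)$ carries no bound whatsoever on $\Delta w$ in any spacetime norm, so this term cannot be estimated as you propose. The paper circumvents this by running the Euclidean evolution from the \emph{frequency-truncated} data $\phi_{\le d(x_n)^\theta}$ with $\theta=1/100$, so that persistence of regularity gives $\||\nabla|^s w_n\|_{S^1}\lesssim d(x_n)^{s\theta}$ for $s\ge0$; then $\|\nabla\chi_n\|_{L^\infty_x}\|\Delta w_n\|_{L^\infty_tL^2_x}\lesssim d(x_n)^{2\theta-1}\to0$, and a separate solution $w_\infty$ with untruncated data $\phi$ is carried along to recover \eqref{app}.

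A second, interlocking omission: even with the regularized data, these commutator terms are only made small by measuring them in $L^1_tH^1$ over a \emph{compact} time window, yielding the bound $T\,d(x_n)^{\theta-1}$ with $n\to\infty$ taken before $T\to\infty$. This is why the paper defines $\tilde v_n$ as $\chi_n w_n(\cdot,\cdot-x_n)$ only on $|t|\le T$ and glues on the free flow $e^{i(t\mp T)\Delta_\Omega}\tilde v_n(\pm T)$ outside, handling the tails via the scattering state of $w_\infty$ and Proposition~\ref{conv}. You mention a time truncation only parenthetically and for an unrelated reason; without it your global-in-time error estimate does not close, since the gradient of $\chi_n$ lives at $|x|\sim d(x_n)$ but the growing bound $d(x_n)^{2\theta}$ on $\nabla^2 w_n$ defeats any attempt to gain smallness from spatial localization or monotone convergence alone. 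The remaining ingredients of your outline --- agreement of the initial data via Proposition~\ref{conv} and Lemma~\ref{lm:often}, the stability step, and the $C_c^\infty$ approximation --- are sound and essentially coincide with the paper's Steps 3 and 5.
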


\begin{proof}
We prove this theorem in five steps.

\textbf{Step 1:} Constructing global solutions to $\text{NLS}_{\R^3}$.

Let $\theta=1/100$. If $t_n\equiv 0$, we let $w_n$ and $w_\infty$ be solutions to $\text{NLS}_{\R^3}$ with initial data $w_n(0)=\phi_{\le d(x_n)^\theta}$ and $w_\infty(0)=\phi$, respectively. If $t_n\to \pm\infty$, we let $w_n$ and $w_\infty$ be solutions to $\text{NLS}_{\R^3}$ satisfying
\begin{align*}
\lim_{t\to\pm\infty}\|w_n(t)-e^{it\Delta_{\R^3}}\phi_{\leq d(x_n)^\theta}\|_{H^1_x}=0  \qtq{and} \lim_{t\to\pm\infty}\|w_\infty(t)-e^{it\Delta_{\R^3}}\phi\|_{H^1_x}=0.
\end{align*}
By \eqref{cond} and the global theory for $\text{NLS}_{\R^3}$ developed  in \cite{hr:radial}, we see that $w_\infty$ and $w_n$ for $n$ sufficiently large are global solutions.  Moreover, using also the results from \cite{dhr}, we have
\begin{align}\label{wn}
\begin{cases}
\ \|w_n\|_{S^1(\R\times\R^3)}+\|w_{\infty}\|_{S^1(\R\times\R^3)}\lsm 1, \\
\ \||\nabla|^s w_n\|_{S^1(\R\times\R^3)}\lsm d(x_n)^{s\theta} \qtq{for} s\ge 0,\\
\ \lim_{n\to \infty}\|w_n-w_\infty\|_{S^1(\R\times\R^3)}=0,
\end{cases}
\end{align}
where the first two estimates are uniform in $n$, for $n$ sufficiently large.

\textbf{Step 2:} Constructing the approximate solution to \eqref{nls}.

Fix $T>0$ to be chosen later and define
\begin{align}\label{E:vat defn}
\tilde v_n(t,x):=\begin{cases}
[\chi_nw_n](t,x-x_n), & |t|\le T, \\
e^{i(t-T)\ld}\tilde v_n(T, x), & t>T, \\
e^{i(t+T)\ld}\tilde v_n(-T, x), & t<-T.
\end{cases}
\end{align}
From the Strichartz inequality and \eqref{wn},
\begin{align}\label{tvn}
\|\tilde v_n\|_{\lt}\lsm \|\chi_n w_n\|_{L_{t,x}^5(\R\times\Omega_n)}+ \|\chi_nw_n(\pm T)\|_{H^1_0(\Omega_n)}\lsm 1.
\end{align}

\textbf{Step 3:}  Asymptotic agreement of the initial data:
\begin{align}\label{data}
\lim_{T\to\infty}\limsup_{n\to \infty}\|\tilde v_n(t_n)-\phi_n\|_{H^1_0(\Omega)}=0.
\end{align}

We first consider the case when $t_n\equiv 0$. Using a change of variables and Lemma~\ref{lm:often}, we have
\begin{align*}
\|\tilde v_n(0)-\phi_n\|_{H^1_0(\Omega)}= \|\chi_n \phi_{> d(x_n)^\theta}\|_{H^1_0(\Omega_n)}\to 0  \qtq{as} n\to \infty.
\end{align*}

Next we consider the case when $t_n\to \infty$; the case $t_n\to -\infty$ can be treated similarly.  In this case, we have $t_n>T$ for sufficiently large $n$.  Thus,
\begin{align*}
\|&\tilde v_n(t_n)-\phi_n\|_{H^1_0(\Omega)}\\
&=\|e^{-iT\Delta_{\Omega_n}}[\chi_nw_n(T)]-\chi_n\phi]\|_{H^1_0(\Omega_n)}\\
&\lsm \|\chi_n[w_n(T)-w_\infty(T)]\|_{H^1_0(\Omega_n)}+ \|(e^{-iT\Delta_{\Omega_n}}-e^{-iT\Delta_{\R^3}})[\chi_nw_\infty(T)]\|_{H^1(\R^3)}\\
&\quad+\|e^{-iT\Delta_{\R^3}}w_\infty(T)-\phi\|_{H^1(\R^3)}+\|(1-\chi_n)w_\infty(T)\|_{H^1(\R^3)}+\|(1-\chi_n)\phi\|_{H^1(\R^3)},
\end{align*}
which converges to zero by first taking $n\to \infty$ and then $T\to \infty$.  To see this, we employ \eqref{wn}, Proposition~\ref{conv} (after first approximating $w_\infty(T)$ by $h\in C^\infty_c(\R^3)$ and noting that $\chi_n h = h$ for $n$ sufficiently large), and Lemma~\ref{lm:often}.

\textbf{Step 4:} Proving $\tilde v_n$ is an approximate solution to \eqref{nls} in the sense that
\begin{align}\label{error}
\lim_{T\to \infty}\limsup_{n\to \infty}\|[(i\partial_t+\ld) \tilde v_n +|\tilde v_n|^2\tilde v_n\|_{N^1(\R)}=0.
\end{align}

Let $e:=(i\partial_t+\ld)\tilde v_n+|\tilde v_n|^2\tilde v_n$.  We first consider the contribution of $\{t>T\}$ to LHS\eqref{error}; the contribution of $\{t<-T\}$ can be treated similarly.  In this case, we have $e=|\tilde v_n|^2\tilde v_n$.  Thus, using Strichartz, Lemma~\ref{lm:often}, and \eqref{wn}, we estimate
\begin{align*}
\|e\|_{N^1((T, \infty))} &= \||\tilde v_n|^2 \tilde v_n\|_{L_t^{\frac53}H_0^{1, \frac{30}{23}}((T,\infty)\times\Omega)}\\
&\lsm \|\tilde v_n\|_{L_{t,x}^5((T,\infty)\times\Omega)}^2\|\tilde v_n\|_{L_t^5H_0^{1,\frac{30}{11}}((T, \infty)\times\Omega)}\\
&\lsm \|\tilde v_n\|_{L_{t,x}^5((T,\infty)\times\Omega)}^2\|\chi_n w_n(T)\|_{H^1_0(\Omega_n)}\\
&\lsm \|\tilde v_n\|_{L_{t,x}^5((T,\infty)\times\Omega)}^2.
\end{align*}
To continue, let $w_+$ denote the asymptotic state of $w_\infty$, that is,
\begin{equation}\label{E:scatstate}
\|w_\infty(t)-e^{it\Delta}w_+\|_{S^1([T,\infty)\times \R^3)}\to 0 \qtq{as} T\to \infty.
\end{equation}
That such a $w_+$ exists follows from the main theorem in \cite{dhr}.  By the triangle inequality and Strichartz,
\begin{align*}
\|\tilde v_n\|_{L_{t,x}^5((T,\infty)\times\Omega)}
&=\|e^{i(t-T)\Delta_{\Omega_n}}[\chi_n w_n(T)]\|_{L_{t,x}^5((T,\infty)\times\Omega_n)}\\
&\lsm \|(e^{i(t-T)\Delta_{\Omega_n}}-e^{i(t-T)\Delta_{\R^3}})[\chi_n w_\infty(T)]\|_{L_{t,x}^5((T,\infty)\times\Omega_n)}\\
&\quad+ \|e^{it\Delta_{\R^3}} w_+\|_{L_{t,x}^5((T,\infty)\times\R^3)}+\|e^{-iT\Delta_{\R^3}}w_\infty(T) -w_+\|_{H^1(\R^3)}\\
&\quad+\|(1-\chi_n)w_\infty(T)\|_{H^1(\R^3)}+\|\chi_n[w_n(T)-w_\infty(T)]\|_{H_0^1(\Omega_n)},
\end{align*}
which converges to zero by first taking $n\to \infty$ then $T\to \infty$, in view of Proposition~\ref{conv}, the monotone convergence theorem, Lemma~\ref{lm:often} and \eqref{wn}.  Thus the contribution of $\{t>T\}$ to LHS\eqref{error} is acceptable.

We are left to estimate the contribution of $\{|t|\le T\}$ to LHS\eqref{error}.  In this case, we compute
\begin{align*}
e=\tau_{x_n}\bigl[(\chi_n^3-\chi_n)|w_n|^2 w_n+2\nabla \chi_n\cdot\nabla w_n+(\Delta\chi_n) w_n\big].
\end{align*}
Using a change of variables, we estimate the contribution of these terms as follows:
\begin{align*}
&\|(\chi_n^3-\chi_n)|w_n|^2 w_n\|_{N^1([-T,T])}\\
&\lsm \|(\chi_n^3-\chi_n)|w_n|^2w_n\|_{L_{t,x}^{\frac{10}7}(\R\times\Omega_n)}+\|(3\chi_n^2-1)\nabla \chi_n |w_n|^2w_n\|_{L_{t,x}^{\frac{10}7}(\R\times\Omega_n)}\\
&\quad +\|(\chi_n^3-\chi_n)|w_n|^2\nabla w_n\|_{L_{t,x}^{\frac{10}7}(\R\times\Omega_n)}\\
&\lsm \|w_n\|_{L_{t,x}^5(\R\times\R^3)}^2\Bigl [\|w_n-w_\infty\|_{L_{t,x}^{\frac{10}3}(\R\times\R^3)}+\|w_\infty \|_{L_{t,x}^{\frac{10}3}(\R\times\{|x|\sim d(x_n)\})}\Bigr]\\
&\quad+\|\nabla \chi_n\|_{L_x^3}\|w_n\|_{L_t^4L_x^{12}(\R\times\R^3)}^2\Bigl[\|w_\infty\|_{L_{t,x}^5(\R\times\{|x|\sim d(x_n)\})}+\|w_\infty-w_n\|_{L_{t,x}^5(\R\times\R^3)}\Bigr]\\
&\quad+\|w_n\|_{L_t^{\frac{10}3}H^{1,\frac{10}3}_0(\R\times\Omega_n)}\bigl[\|w_\infty\|_{L_{t,x}^5(\R\times\{|x|\sim d(x_n)\})}+\|w_\infty-w_n\|_{L_{t,x}^5(\R\times\R^3)}\bigr]^2,\end{align*}
which converges to zero as $n\to\infty$ by \eqref{wn} and the monotone convergence theorem.  Lastly,
\begin{align*}
\|2&\nabla \chi_n \cdot \nabla w_n +\Delta\chi_n w_n\|_{N^1([-T,T])}\\
&\lsm \|\nabla\chi_n\cdot\nabla w_n\|_{L_t^1H_0^1(\R\times\Omega_n)}+\|\Delta \chi_nw_n\|_{L_t^1H^1_0(\R\times\Omega_n)}\\
&\lsm T\Bigl[\|\nabla \chi_n\|_{L^\infty_x}\|\nabla w_n\|_{L_t^\infty L_x^2}+\|\nabla \chi_n\|_{L^\infty_x}\|\Delta w_n\|_{L_t^\infty L_x^2}+\|\Delta \chi_n\|_{L^\infty_x}\|\nabla w_n\|_{L_t^\infty L_x^2}\\
&\qquad\qquad + \|\Delta \chi_n\|_{L^\infty_x}\|w_n\|_{L_t^\infty L_x^2}+\|\nabla\Delta \chi_n\|_{L^\infty_x}\|w_n\|_{L_t^\infty L_x^2}\Bigr]\\
&\lsm T d(x_n)^{\theta-1}\to 0\qtq{as} n\to \infty.
\end{align*}
Thus the contribution of $\{|t|\leq T\}$ to LHS\eqref{error} is also acceptable.

\textbf{Step 5:} Constructing $v_n$ and approximating by $C_c^\infty$ functions.

Using \eqref{tvn}, \eqref{data}, \eqref{error} and Lemma~\ref{lm:stability}, for $n$ sufficiently large we obtain a global solution $v_n$ to \eqref{nls} with initial data $v_n(0)=\phi_n$ which satisfies
\begin{align}\label{approx}
\|v_n\|_{\lt}\lsm 1\qtq{and} \lim_{T\to\infty}\limsup_{n\to\infty}\|v_n(t-t_n)-\tilde v_n(t)\|_{X^1(\R)}=0.
\end{align}

It remains to prove \eqref{app}.  From the density of $C_c^\infty(\R\times\R^3)$ functions in $X^1(\R)$, for any $\eps>0$ we can find $\psi_\eps\in C_c^\infty(\R\times\R^3)$ such that
\begin{align*}
\|w_\infty-\psi_\eps\|_{X^1(\R)}\le \tfrac\eps 3.
\end{align*}
Using also \eqref{approx}, we see that to prove \eqref{app} it suffices to show
\begin{align}\label{final}
\|\tilde v_n(t,x)-w_\infty(t,x-x_n)\|_{X^1(\R)}\le \tfrac \eps 3
\end{align}
for $n, T$ sufficiently large. By \eqref{E:vat defn} and the triangle inequality,
\begin{align*}
\text{LHS}\eqref{final}&\le \|\chi_n w_n-w_\infty\|_{X^1([-T,T])}+\|e^{i(t-T)\Delta_{\Omega_n}}[\chi_nw_n(T)]-w_\infty\|_{X^1((T,\infty))}\\
&\quad +\|e^{i(t+T)\Delta_{\Omega_n}}[\chi_nw_n(-T)]-w_\infty\|_{X^1((-\infty, -T))}.
\end{align*}
Arguing as in Lemma~\ref{lm:often} and using \eqref{wn}, we have
\begin{align*}
\|\chi_nw_n-w_\infty\|_{X^1([-T,T])}\le \|\chi_n(w_n-w_\infty)\|_{X^1(\R)}+\|(1-\chi_n)w_\infty\|_{X^1(\R)}\to 0
\end{align*}
as $n\to\infty$.  Next, we estimate the contribution of $\{t>T\}$; the contribution of $\{t<-T\}$ can be handled similarly.  To this end, we observe that
\begin{align*}
\|&e^{i(t-T)\Delta_{\Omega_n}}[\chi_n w_n(T)]-w_\infty\|_{X^1((T,\infty))}\\
&\lesssim \|w_n-w_\infty\|_{L^\infty_t H^1_x} + \| e^{i(t-T)\Delta_{\Omega_n}} \chi_n w_\infty(T)\|_{X^1((T,\infty))}
	+ \| w_\infty(t) \|_{X^1((T,\infty))}.
\end{align*}
The first term converges to zero as $n\to\infty$ by virtue of \eqref{wn}, while the last term converges to zero as $T\to\infty$ by the dominated convergence theorem.  This leaves us to estimate the middle term, which is bounded by
\begin{align*}
	\| e^{i(t-T)\Delta_{\Omega_n}} \chi_n w_\infty(T)\|_{L^5_t L^{\frac{30}{11}}_x((T,\infty))} + \| e^{i(t-T)\Delta_{\Omega_n}} (-\Delta_{\Omega_n})^{\frac12}[\chi_n w_\infty(T)]\|_{L^5_t L^{\frac{30}{11}}_x((T,\infty))}.
\end{align*}
Using Proposition~\ref{conv} and \eqref{E:scatstate}, the first of these two summands is easily seen to converge to zero when first taking $n\to \infty$ and then $T\to \infty$.  Let us focus instead on the treatment of the second summand, which requires an additional idea:
\begin{align*}
\| &e^{i(t-T)\Delta_{\Omega_n}} (-\Delta_{\Omega_n})^{\frac12}[\chi_n w_\infty(T)]\|_{L^5_t L^{\frac{30}{11}}_x((T,\infty))} \\
& \lesssim \| (-\Delta_{\Omega_n})^{\frac12}[\chi_n w_\infty(T)] - \chi_n (-\Delta_{\R^3})^{\frac12} w_\infty(T) \|_{L^2_x} \\
& \qquad	+ \| [e^{i(t-T)\Delta_{\Omega_n}}-e^{i(t-T)\Delta_{\R^3}}]\chi_n (-\Delta_{\R^3})^{\frac12} w_\infty(T)\|_{L^5_t L^{\frac{30}{11}}_x((T,\infty))} \\
& \qquad	+ \| e^{i(t-T)\Delta_{\R^3}} \chi_n (-\Delta_{\R^3})^{\frac12} w_\infty(T)\|_{L^5_t L^{\frac{30}{11}}_x((T,\infty))}.
\end{align*}
Now observe that the first term converges to zero as $n\to\infty$ by virtue of part three of Proposition~\ref{conv} and Lemma~\ref{lm:often}; the second term converges to zero as $n\to\infty$ by virtue of part four of Proposition~\ref{conv}; the last term converges to zero as $n\to\infty$ and then $T\to\infty$ by virtue of \eqref{E:scatstate} and the dominated convergence theorem.  This completes the proof of Theorem~\ref{embed}.
\end{proof}

\section{A Palais--Smale condition}

The goal of this section is to prove a Palais--Smale condition for minimizing sequences of blowup solutions to \eqref{nls}.

Throughout this section, we use the notation
$$
S_I(u):=\int_I\int_{\Omega} |u(t,x)|^5\, dx\, dt,
$$
where $I\subseteq\R$ denotes a time interval.  For $\lambda\in (0, \infty)$, we define
\begin{align*}
L(F^\lambda):=\sup\{S_I(u)\},
\end{align*}
where the supremum is taken over all solutions $u:I\times\Omega\to \C$ to \eqref{nls} such that
$$
F^\lambda(u)\le F^\lambda \qtq{and} \|\nabla u(t)\|_{L^2(\Omega)}\|u(t)\|_{L^2(\Omega)}<\|\nabla Q\|_{L^2(\R^3)}\|Q\|_{L^2(\R^3)}
$$
for some $t\in I$.  By Theorem~\ref{T:lwp} and Proposition~\ref{coer}, we have $L(F^\lambda)<\infty$, provided $F^\lambda$ is sufficiently small.  Indeed,
\begin{align}\label{small}
\|u\|_{X^1(\R)}\lsm_\lambda F^{\lambda}(u_0)^{\frac 12} \qtq{whenever} F^{\lambda}(u_0)\lsm_\lambda \eta_0,
\end{align}
where $\eta_0$ is the small data threshold given by Theorem~\ref{T:lwp}.

Therefore, if Theorem \ref{main_c} fails to be true, then there exist a $\lambda_0\in(0,\infty)$ and a critical value $0<F^{\lambda_0}_{c}<F^{\lambda_0}_*$ such that
 \begin{align}\label{induct}
L(F^{\lambda_0})<\infty \mbox{ for  }F^{\lambda_0}<F^{\lambda_0}_{c} \qtq{and} L(F^{\lambda_0})=\infty\mbox{ for } F^{\lambda_0}> F^{\lambda_0}_{c}.
 \end{align}


\begin{prop}[Palais--Smale condition]\label{prop:psc}
Let $u_n:I_n\times\Omega\to \C$ be a sequence of solutions to \eqref{nls} and let $t_n\in I_n$ satisfy
\begin{align}\label{kmc}
\|\nabla u_n(t_n)\|_{L^2(\Omega)}\|u_n(t_n)\|_{L^2(\Omega)}<\|\nabla Q\|_{L^2(\R^3)}\|Q\|_{L^2(\R^3)}
\end{align}
and
\begin{align*}
&\lim_{n\to \infty} F^{\lambda_0}(u_n)=F_c^{\lambda_0}<F^{\lambda_0}_* \qtq{and} \lim_{n\to \infty} S_{\ge t_n}(u_n)=\lim_{n\to \infty}S_{\le t_n}(u_n)=\infty.
\end{align*}
Then $\{u_n(t_n)\}$ is precompact in $\hoo$.
\end{prop}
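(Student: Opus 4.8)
The plan is to follow the standard concentration-compactness strategy, using the linear profile decomposition of Theorem~\ref{thm:lf} together with the nonlinear embedding results to show that exactly one profile survives and that it furnishes the desired precompactness. By time-translating each $u_n$ we may assume $t_n\equiv 0$; by Proposition~\ref{coer}, \eqref{kmc} and $F^{\lambda_0}(u_n)\to F_c^{\lambda_0}$ imply that $\{u_n(0)\}$ is bounded in $H^1_0(\Omega)$, so we may apply Theorem~\ref{thm:lf} to $f_n:=u_n(0)$ to obtain profiles $\phi_n^j$, parameters $(t_n^j,x_n^j)$, and remainders $w_n^J$. The decoupling of mass and energy, together with Proposition~\ref{coer} (specifically the coercivity $F^\lambda(u)\sim\|u\|_{H^1_0}^2$ and the fact that the scattering size of $w_n^J$ is small in the limit), shows that each profile individually satisfies \eqref{km} and $F^{\lambda_0}(\phi^j)\le F_c^{\lambda_0}$, with a matching constraint on the kinetic/mass product so that the hypotheses of the relevant nonlinear theory apply.

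The key dichotomy is then over the profiles. For Case~1 profiles (those anchored at a point $x_\infty^j\in\Omega$), we associate a nonlinear solution to $\text{NLS}_\Omega$ — either the maximal solution with that initial data at $t=0$, or, when $t_n^j\to\pm\infty$, the solution that scatters to $e^{it\ld}\phi^j$; the small-data theory of Theorem~\ref{T:lwp} controls these when the profile's free energy is below the small-data threshold, while for larger profiles one invokes the inductive hypothesis \eqref{induct}. For Case~2 profiles (those with $d(x_n^j)\to\infty$), one uses Theorem~\ref{embed} to produce global solutions $v_n^j$ to $\text{NLS}_\Omega$ with $\|v_n^j\|_{L^5_{t,x}}\lesssim1$ and with the $C_c^\infty$ approximation \eqref{app}. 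One then builds an approximate solution $\tilde u_n^J:=\sum_{j=1}^J v_n^j(\cdot - t_n^j) + e^{it\ld}w_n^J$, and a standard argument — using the asymptotic orthogonality \eqref{lp5} of the parameters (so that cross terms in the nonlinearity vanish in the limit, with the $C_c^\infty$ approximation making this rigorous), the Strichartz control on $e^{it\ld}w_n^J$, and the smallness of $\|e^{it\ld}w_n^J\|_{L^5_{t,x}}$ — shows that $\tilde u_n^J$ is an approximate solution with small error in $N^1$ for $J,n$ large. Feeding this into the stability lemma, Lemma~\ref{lm:stability}, yields a uniform spacetime bound $\limsup_n S_\R(u_n)\lesssim \sum_j S_\R(v_n^j) + o(1)<\infty$, contradicting $S_{\ge0}(u_n)\to\infty$ unless essentially one profile is present.

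Thus $J^*=1$, there is a single profile $\phi^1$ with $t_n^1\equiv0$, and $w_n^1\to0$ in $H^1_0(\Omega)$: if $t_n^1$ tended to $\pm\infty$, the corresponding nonlinear solution would have finite $L^5_{t,x}$ norm on a half-line, forcing (via the Duhamel perturbation and scattering) $S_{\ge0}(u_n)$ or $S_{\le0}(u_n)$ to stay bounded, again a contradiction; and if $w_n^1$ did not go to zero in $H^1_0$, the defect would either carry a nontrivial fraction of the energy (contradicting minimality of $F_c^{\lambda_0}$ by the energy decoupling) or contribute a strictly positive amount to the $L^5_{t,x}$ norm of the nonlinear evolution, which can be absorbed only if it vanishes. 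One must also rule out the Case~2 alternative for this surviving profile: if $d(x_n^1)\to\infty$ the embedding theorem would again give a uniform bound on $S_\R(u_n)$, so the surviving profile must be of Case~1 type with $x_n^1\to x_\infty^1\in\Omega$. Hence $u_n(0)=\phi_n^1+w_n^1\to\phi^1$ strongly in $H^1_0(\Omega)$, which is the claimed precompactness.

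The main obstacle is the orthogonality bookkeeping in the construction and error estimate for $\tilde u_n^J$: one needs the decoupling of the profiles in $L^5_{t,x}$ and in the $N^1$-norm of the difference between $|\sum v_n^j|^2\sum v_n^j$ and $\sum|v_n^j|^2 v_n^j$, and because some profiles live in the limiting geometry $\R^3$ (Case~2) while others live in $\Omega$ (Case~1), these orthogonality statements require the convergence results of Propositions~\ref{conv} and~\ref{lim} and the $C_c^\infty$ approximation \eqref{app} to be invoked carefully and uniformly in $J$. Once this is handled, extracting the single profile and upgrading weak to strong convergence via the energy/mass decoupling and the coercivity of Proposition~\ref{coer} is routine.
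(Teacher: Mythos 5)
Your proposal is correct and follows essentially the same route as the paper: time-translate to $t_n\equiv 0$, apply the linear profile decomposition, use free-energy decoupling plus coercivity to reduce to a dichotomy between a single critical Case~1 profile (which gives compactness after ruling out $t_n^1\to\pm\infty$ and showing $w_n^1\to 0$) and a configuration of strictly subcritical profiles, which is precluded by constructing the approximate solution $\sum_j v_n^j + e^{it\Delta_\Omega}w_n^J$ from the inductive hypothesis \eqref{induct}, Theorem~\ref{embed}, the orthogonality \eqref{lp5} with the $C_c^\infty$ approximations, and the stability lemma. The paper packages this as an explicit Scenario~I/Scenario~II split and handles the one delicate error term $\|u_n^J\nabla e^{it\Delta_\Omega}w_n^J\|_{L^2_{t,x}}$ via the local smoothing Corollary~\ref{cora}, but these are organizational and technical details rather than differences of method.
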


\begin{proof}
Using the time-translation symmetry of \eqref{nls}, we may take $t_n\equiv0$ and so
\begin{align}\label{sinf}
\lim_{n\to\infty} S_{\ge 0}(u_n)=\lim_{n\to \infty} S_{\le 0} (u_n)=\infty.
\end{align}

Applying Theorem~\ref{thm:lf} to the sequence $u_n(0)$ (which is bounded in $H^1_0(\Omega)$ by hypothesis) and passing to a subsequence if necessary, we obtain
the linear profile decomposition
\begin{align}\label{s0}
u_n(0)=\sum_{j=1}^J \phi_n^j+w_n^J.
\end{align}
In particular, from energy and mass decoupling, for any given $J$ we have
\begin{align}
F^{\lambda_0}(u_n)=\sum_{j=1}^J F^{\lambda_0}(\phi_n^j)+F^{\lambda_0}(w_n^J)+o(1) \qtq{as} n\to \infty.\label{s01}
\end{align}

To prove the proposition we need to show that $J^*=1$, $w_n^1\to 0$ in $H^1_0(\Omega)$, $t_n^1\equiv0$, and that the only profile $\phi_n^1$ conforms to Case 1.  All other possibilities will be shown to contradict \eqref{sinf}.  We distinguish two scenarios.

\textbf{Scenario I:} $\sup_j\limsup_{n\to \infty}F^{\lambda_0}(\phi_n^j)=F^{\lambda_0}_c$.

From the non-triviality of the profiles, we have $\liminf_{n\to \infty} F^{\lambda_0}_c(\phi_n^j)>0$ for every $1\le j\le J^*$; indeed, $\|\phi_n^j\|_{H^1_0(\Omega)}\to\|\phi^j\|_{H^1(\R^3)}$. Thus, \eqref{s01} implies that there is a single profile in the decomposition \eqref{s0}, that is $J^*=1$, and we can write
\begin{align}\label{s02}
u_n(0)=\phi_n+w_n \qtq{with} \|w_n\|_{H^1_0(\Omega)}\to 0.
\end{align}

If $\phi_n$ conforms to Case 2, then by Theorem~\ref{embed} there exists a global solution $v_n$ to \eqref{nls} with initial data $v_n(0)=\phi_n$, which satisfies finite spacetime bounds, uniform for $n$ large.  By Lemma~\ref{lm:stability}, these spacetime bounds extend to $u_n$ for $n$ sufficiently large, which contradicts \eqref{sinf}. Therefore $\phi_n$ must conform to Case 1 and we have the decomposition
\begin{align*}
u_n(0)=e^{it_n\Delta_\Omega}\phi+w_n \qtq{with} \|w_n\|_{H^1_0(\Omega)}\to 0,
\end{align*}
and $t_n\equiv0$ or $t_n\to \pm\infty$.  If $t_n\equiv 0$ we obtain the desired compactness.  Thus, we only need to preclude the case $t_n\to \pm \infty$.

Let us suppose $t_n\to \infty$; the case $t_n\to -\infty$ can be treated symmetrically.  In this case, using Strichartz and the monotone convergence theorem we get
\begin{align*}
S_{\ge 0}(e^{it\ld}u_n(0))\lsm S_{\ge t_n}(e^{it\ld}\phi)+S_{\geq 0}(e^{it\ld}w_n)\to 0 \qtq{as} n\to \infty.
\end{align*}
By Lemma~\ref{lm:stability}, this implies that $S_{\geq 0}(u_n)\to 0$, which again contradicts \eqref{sinf}.

\textbf{Scenario II:} $\sup_j\limsup_{n\to \infty}F^{\lambda_0}(\phi_n^j)\le F^{\lambda_0}_c-2\delta$ for some $\delta>0$.

We first observe that for each finite $J\le J^*$ we have $F(\phi_n^j)\le F_c^{\lambda_0}-\delta$ for all $1\le j\le J$ and $n$ sufficiently large. Moreover, from mass and kinetic energy decoupling, Proposition~\ref{coer}, and \eqref{kmc}, there exists $\gamma=\gamma(\lambda_0, F^{\lambda_0}_c)>0$ such that
\begin{align}\label{ss01}
\|\nabla \phi_n^j\|_{L^2(\Omega)}\|\phi_n^j\|_{L^2(\Omega)}<\bigl(1-\gamma \bigr)\|\nabla Q\|_{L^2(\R^3)}\|Q\|_{L^2(\R^3)}
\end{align}
for all $1\le j\le J$ and $n$ sufficiently large.

If $j$ conforms to Case 1 and $t_n^j\equiv0$, we define $v^j:I^j\times\Omega\to \C$ to be the maximal-lifespan solution to \eqref{nls} with initial data $v^j(0)=\phi^j$. If instead $t_n^j\to \pm\infty$, we define $v^j:I^j\times\Omega\to \C$ to be the maximal-lifespan solution to \eqref{nls} which scatters to $e^{it\ld}\phi^j$ as $t\to \pm \infty$. In both cases, we define $v_n^j(t,x)=v^j(t+t_n^j,x)$.  Note that $v_n^j$ is also a maximal-lifespan solution on $I_n^j=I^j-\{t_n^j\}$.  Moreover, for $n$ sufficiently large we have $0\in I_n^j$ and
\begin{align}\label{vnj(0)}
\lim_{n\to \infty}\|v_n^j(0)-\phi_n^j\|_{H^1_0(\Omega)}= 0 .
\end{align}
Combining this with $F(\phi_n^j)\le F^{\lambda_0}_c-\delta$, \eqref{induct}, and \eqref{ss01}, we see that $v_n^j$ is a global solution to \eqref{nls} with finite spacetime bounds. Therefore, we can approximate $v_n^j$ by $C_c^\infty(\R\times\R^3)$ functions.  For any $\eps>0$ we choose $\tilde\psi^j_\eps\in C_c^\infty(\R\times\R^3)$ such that
\begin{align*}
\|v^j-\tilde\psi^j_\eps\|_{X^1(\R)}\le \eps/2.
\end{align*}
Define $\psi^j_\eps=\tau_{-x_\infty^j}\tilde\psi^j_\eps$.  Changing variables, for $n$ sufficiently large we get
\begin{align}\label{close}
\|v_n^j-\tau_{x_n^j}\psi^j_\eps(\cdot+t_n^j)\|_{X^1(\R)}<\eps.
\end{align}

If $j$ conforms to Case 2, we apply Theorem~\ref{embed} to obtain a global solution $v_n^j$ to \eqref{nls} with $v_n^j(0)=\phi_n^j$.  By Theorem~\ref{embed}, this solution has finite spacetime bounds and satisfies \eqref{close}.

In all cases, we may use \eqref{small} together with the bounds on the spacetime norms of $v_n^j$ to deduce
\begin{align}\label{vnj bounds}
\|v_n^j\|_{X^1(\R)}\lsm_{F^{\lambda_0}_c,\delta} F(v_n^j)^{\frac 12}\lsm_{F^{\lambda_0}_c,\delta} 1.
\end{align}
Combining this with \eqref{s01} yields
\begin{align}\label{big}
\limsup_{n\to\infty}\sum_{j=1}^J\|v_n^j\|_{X^1(\R)}^2\lsm_{F^{\lambda_0}_c,\delta}\sum_{j=1}^J F(\phi_n^j)\lsm_{F^{\lambda_0}_c,\delta} 1,
\end{align}
uniformly for finite $J\le J^*$.  Using also \eqref{close} and the asymptotic decoupling of parameters \eqref{lp5}, one immediately deduces that
\begin{align}\label{sum}
\limsup_{n\to \infty}\biggl\|\sum_{j=1}^J v_n^j\biggr\|_{X^1(\R)}\lsm_{F_c^{\lambda_0},\delta} 1 \quad \text{uniformly for finite } J\le J^*.
\end{align}
Moreover, the same argument combined also with \eqref{s01} shows that for given $\eta>0$ there exists $J'=J'(\eta)$ such that
\begin{align}\label{er}
\limsup_{n\to \infty}\biggl\|\sum_{j=J'}^J v_n^j\biggr\|_{X^1(\R)}\le \eta \quad\text{uniformly in } J\ge J'.
\end{align}

We are now ready to construct an approximate solution to \eqref{nls}. For each $n$ and $J$ we define
\begin{align}\label{unJ}
u_n^J:=\sum_{j=1}^J v_n^j+e^{it\Delta_\Omega} w_n^J.
\end{align}
Obviously $u_n^J$ is defined globally in time.  We will verify that for $n$ and $J$ large, $u_n^J$ is an approximate solution to \eqref{nls} with global finite spacetime bounds and that $u_n^J(0)$ is a good approximation for $u_n(0)$ in $H^1_0(\Omega)$.  An application of Lemma~\ref{lm:stability} then yields that $u_n$ satisfies finite spacetime bounds globally in time, uniformly in $n$ large.  This contradicts \eqref{sinf} and so Scenario 2 cannot occur.

We now turn to verifying the claims we made above about $u_n^J$.

\noindent\textbf{Claim 1:} Asymptotic matching of the initial data:  For any finite $J$ we have
$$
\lim_{n\to \infty}\|u_n^J(0)-u_n(0)\|_{H^1_0(\Omega)}=0.
$$
Indeed, this follows immediately from \eqref{s0}, \eqref{vnj(0)}, and \eqref{unJ}.

\noindent\textbf{Claim 2:} Finite spacetime bounds for $u_n^J$:
$$
\limsup_{n\to \infty}\|u_n^J\|_{X^1(\R)}\lsm_{F^{\lambda_0}_c,\delta}1 \quad\text{uniformly in } J.
$$
This follows easily from \eqref{sum} and the Strichartz estimate.

\noindent\textbf{Claim 3:} $u_n^J$ is an approximate solution to \eqref{nls} for $n,J$ large:
$$
\lim_{J\to J^*}\limsup_{n\to \infty} \|(i\partial_t+\ld)u_n^J + |u_n^J|^2 u_n^J\|_{N^1(\R)}=0.
$$
Denote $F(z):=-|z|^2 z$ and write
\begin{align}
(i\partial_t+\ld)u_n^J-F(u_n^J)&=\sum_{j=1}^J F(v_n^j)-F(u_n^J)\notag\\
&=\sum_{j=1}^J F(v_n^j)-F\biggl(\sum_{j=1}^J v_n^j\biggr)+F(u_n^J-e^{it\ld}w_n^J) -F(u_n^J).\label{pt}
\end{align}
Using the equivalence of Sobolev spaces, we obtain
\begin{align*}
\biggl\|\sum_{j=1}^J &F(v_n^j)-F\biggl(\sum_{j=1}^J v_n^j\biggr)\biggr\|_{N^1(\R)}\\
&\lsm _{J} \sum_{j\neq k}\|v_n^k|v_n^j|^2\|_{L_t^{\frac{5}{3}} L_x^{\frac{30}{23}}}+\sum_{j\neq k}\|v_n^j v_n^k \nabla v_n^j\|_{L_t^{\frac{5}{3}} L_x^{\frac{30}{23}}}
	+\sum_{j\neq k} \||v_n^j|^2 \nabla v_n^k\|_{L_t^{\frac{5}{3}} L_x^{\frac{30}{23}}}\\
&\lsm_J\sum_{j\neq k}\|v_n^j v_n^k\|_{L_{t,x}^{\frac 52}} \|v_n^j \|_{X^1} +\sum_{j\neq k}\|v_n^j\|_{L_{t,x}^5}\|v_n^j \nabla v_n^k\|_{L_t^{\frac{5}{2}} L_x^{\frac{30}{17}}},
\end{align*}
which converges to zero as $n\to \infty$ in view of \eqref{vnj bounds} and the asymptotic orthogonality of parameters \eqref{lp5} combined with \eqref{close}.

We now turn to estimating the second difference in \eqref{pt}. We will show
\begin{align}\label{two}
\lim_{J\to \infty}\limsup_{n\to \infty}\|F(u_n^J-e^{it\ld}w_n^J)-F(u_n^J)\|_{N^1(\R)}=0.
\end{align}
Using H\"older, Claim 2, and Strichartz, we estimate
\begin{align*}
\|F(u_n^J)-F(u_n^J-e^{it\ld}w_n^J)\|_{L_t^{\frac{5}{3}} L_x^{\frac{30}{23}}}
&\lsm\bigl[\|u_n^J\|_{L_t^5 L_x^{\frac{60}{17}}}^2+\|e^{it\ld}w_n^J\|_{L_t^5 L_x^{\frac{60}{17}}}^2\bigr]\|e^{it\ld}w_n^J\|_{L_{t,x}^5}\\
&\lsm_{F^{\lambda_0}_c,\delta}\|e^{it\ld}w_n^J\|_{L_{t,x}^5},
\end{align*}
which converges to zero as $n\to \infty$ and $J\to \infty$.  Similarly,
\begin{align*}
\|\nabla[F(u_n^J)-F(u_n^J-&e^{it\ld}w_n^J)]\|_{N^0}\\
&\lsm\sum_{k=0}^1 \|\nabla u_n^J\|_{L^5_t L^{\frac{30}{11}}_x}\|u_n^J\|_{L_{t,x}^5}^k \|e^{it\ld}w_n^J\|_{L_{t,x}^5}^{2-k}\\
&\quad+\sum_{k=0}^2 \|u_n^J\|_{L_{t,x}^5}^k \|u_n^J\nabla e^{it\ld} w_n^J \|_{L_{t,x}^2}\|e^{it\ld} w_n^J\|_{L_{t,x}^5}^{2-k}\\
&\lsm_{F^{\lambda_0}_c,\delta}\sum_{k=0}^1\|e^{it\ld} w_n^J\|_{L_{t,x}^5}^{2-k}+ \|u_n^J\nabla e^{it\ld} w_n^J \|_{L_{t,x}^2}.
\end{align*}
To prove \eqref{two}, it thus remains to show that
\begin{align*}
\lim_{J\to \infty}\limsup_{n\to \infty}\|u_n^J\nabla e^{it\ld}w_n^J \|_{L_{t,x}^2}=0.
\end{align*}
Using \eqref{er}, H\"older, and Strichartz, this further reduces to showing that
\begin{align*}
\lim_{J\to \infty}\limsup_{n\to \infty}\|v_n^j\nabla e^{it\ld}w_n^J\|_{L_{t,x}^2}=0 \qtq{for each} 1\le j\le J'.
\end{align*}
This however follows easily from \eqref{close} and Corollary~\ref{cora}.

This completes the proof of Claim 3 and so the proof of the proposition.
\end{proof}

\section{Proof of Theorem \ref{main_c}}
We will prove Theorem~\ref{main_c} in two steps.  First we will show that the Palais--Smale condition guarantees that the failure of Theorem~\ref{main_c} implies the existence of almost periodic counterexamples.  In the second step, we will use a virial argument to preclude the existence of such almost periodic solutions.

\begin{thm}[Existence of almost periodic solutions]\label{ap}
Suppose Theorem~\ref{main_c} fails.  Then there exist $\lambda_0\in(0,\infty)$, a critical value $F_c^{\lambda_0}<F^{\lambda_0}_*$, and a global solution $u$ to \eqref{nls} with $F(u)=F^{\lambda_0}_c$, which blows up in both time directions in the sense that
\begin{align*}
S_{\ge 0} (u)=S_{\le 0}( u)=\infty,
\end{align*}
and whose orbit $\{u(t): \, t\in \R\}$ is precompact in $H^1_0(\Omega)$.
\end{thm}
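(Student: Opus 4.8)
The plan is the standard concentration--compactness reduction of Kenig--Merle type: produce a minimal blowup solution directly out of the Palais--Smale condition, Proposition~\ref{prop:psc}. Because the obstacle breaks translation invariance and $L^5_{t,x}$ lies strictly between $L^2_x$ and $\dot H^1_x$ in dimensionality, no quotient by a noncompact symmetry group is needed --- the orbit will be precompact in $H^1_0(\Omega)$ on the nose.

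First I would unwind \eqref{induct}. Since $L(F^{\lambda_0})=\infty$ for every value slightly above $F^{\lambda_0}_c$, for each $n$ I can choose a solution $u_n$ to \eqref{nls} on a compact interval $I_n$ which obeys \eqref{km}, has $F^{\lambda_0}(u_n)\le F^{\lambda_0}_c+\tfrac1n$, and satisfies $S_{I_n}(u_n)\ge n$. Since $L(F^{\lambda_0})<\infty$ below $F^{\lambda_0}_c$, we must in fact have $F^{\lambda_0}(u_n)\to F^{\lambda_0}_c$. A median-time argument --- the intermediate value theorem applied to the continuous, monotone function $t\mapsto S_{I_n\cap(-\infty,t]}(u_n)-S_{I_n\cap[t,\infty)}(u_n)$ --- produces $t_n\in I_n$ with $S_{\ge t_n}(u_n)=S_{\le t_n}(u_n)=\tfrac12 S_{I_n}(u_n)\to\infty$. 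By Proposition~\ref{coer} the constraint \eqref{km} propagates in time, so \eqref{kmc} holds at $t_n$; thus $\{u_n\}$, $\{t_n\}$ satisfy all the hypotheses of Proposition~\ref{prop:psc}, and after passing to a subsequence $u_n(t_n)\to u_0$ in $H^1_0(\Omega)$.

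Next I would let $u$ be the maximal-lifespan solution with $u(0)=u_0$. Continuity of mass and energy on $H^1_0(\Omega)$ (the quartic term being controlled by Sobolev embedding) together with conservation of $F^{\lambda_0}$ gives $F^{\lambda_0}(u)=\lim_n F^{\lambda_0}(u_n)=F^{\lambda_0}_c$. Fixing $\delta>0$ with $F^{\lambda_0}_c<(1-\delta)F^{\lambda_0}_*$ and invoking the quantitative part of Proposition~\ref{coer} for $u_n$, one gets $\|\nabla u_n(t)\|_{L^2}\|u_n(t)\|_{L^2}\le(1-c_\delta)\|\nabla Q\|_{L^2}\|Q\|_{L^2}$ for all $t$; passing to the limit shows $u_0$ obeys \eqref{km} with room to spare, so by Proposition~\ref{coer} the solution $u$ is \emph{global}. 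To see that $u$ blows up in both directions, suppose $S_{\ge0}(u)<\infty$; then Lemma~\ref{lm:stability} applied with approximate solution $u$, zero error, and data $u_n(t_n)\to u(0)$ would force $S_{\ge t_n}(u_n)\lesssim S_{\ge0}(u)+1<\infty$ uniformly for $n$ large, contradicting the construction. Hence $S_{\ge0}(u)=\infty$, and symmetrically $S_{\le0}(u)=\infty$.

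Finally, for precompactness of the orbit I would apply Proposition~\ref{prop:psc} a second time: given any sequence $\{\tau_n\}\subset\R$, the time-translates $t\mapsto u(t+\tau_n)$ are solutions to \eqref{nls} with $F^{\lambda_0}=F^{\lambda_0}_c$, obeying \eqref{kmc} at time $0$ by the coercivity applied to $u$, and with $S_{\ge0}=S_{\le0}=\infty$ because $u$ blows up in both directions. Proposition~\ref{prop:psc} then yields a subsequence along which $u(\tau_n)$ converges in $H^1_0(\Omega)$, which is exactly precompactness of $\{u(t):t\in\R\}$. I expect no essential difficulty beyond bookkeeping: everything substantial is already contained in Propositions~\ref{prop:psc} and~\ref{coer}, the only mildly delicate points being the passage of the strict inequality \eqref{km} to the limiting data and the resulting globalness of $u$, both handled by the quantitative form of Proposition~\ref{coer}.
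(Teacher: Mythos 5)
Your proposal is correct and follows essentially the same route as the paper: extract a minimizing sequence from the failure of Theorem~\ref{main_c}, split the spacetime norm at a median time, apply Proposition~\ref{prop:psc} to get a strong limit $\phi$, use Proposition~\ref{coer} for globalness and Lemma~\ref{lm:stability} for two-sided blowup, then reapply Proposition~\ref{prop:psc} to arbitrary time sequences for precompactness of the orbit. The only difference is that you spell out a few steps (the intermediate-value median-time argument, the passage of the strict inequality \eqref{km} to the limit) that the paper leaves implicit.
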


\begin{proof}
If Theorem \ref{main_c} fails, then there exist $\lambda_0\in(0,\infty)$, a critical value $F_c^{\lambda_0}<F^{\lambda_0}_*$, and a sequence of solutions
$u_n:I_n\times\Omega\to \C$ such that $F(u_n)\to F_c^{\lambda_0}$ and $S_{I_n}(u_n)\to \infty.$ Let $t_n\in I_n$ be such that
$S_{\ge t_n}(u_n)=S_{\le t_n}(u_n)=\frac 12 S_{I_n}(u_n)$; then
\begin{align}\label{bu}
\lim_{n\to\infty} S_{\ge t_n}(u_n)=\lim_{n\to \infty}S_{\le t_n}(u_n)=\infty.
\end{align}
Applying Proposition~\ref{prop:psc} and passing to a subsequence if necessary, we find $\phi\in H^1_0(\Omega)$ such that $u_n(t_n)\to \phi$ in $H^1_0(\Omega)$. In particular, $F^{\lambda_0}(\phi)=F_c^{\lambda_0}$.

Let $u:I\times\Omega\to \C$ be the maximal-lifespan solution to \eqref{nls} with initial data $u(0)=\phi$. Using Lemma~\ref{lm:stability} and \eqref{bu}, we get
\begin{align}\label{62}
S_{\ge 0}(u)=S_{\le 0}(u)=\infty.
\end{align}

Note that as $F_c^{\lambda_0}<F^{\lambda_0}_*$, Proposition~\ref{coer} guarantees that $\|u\|_{L_t^\infty H^1_0(I\times\Omega)}\leq C(F_c^{\lambda_0})<\infty$.  Thus, by the standard local theory for \eqref{nls} (see Theorem~\ref{T:lwp}), we obtain that $u$ is global in time.

Finally, we note that the orbit of $u$ is precompact in $H^1_0(\Omega)$.  Indeed, for any sequence $\{t_n'\}\subset I$, \eqref{62} implies that $S_{\le t_n'}(u)=S_{\ge t_n'}(u)=\infty$.  An application of Proposition~\ref{prop:psc} then yields that $\{u(t_n')\}$ admits a subsequence that converges strongly in $H_0^1(\Omega)$.
\end{proof}

\begin{proof}[Proof of Theorem \ref{main_c}]
Assume that Theorem \ref{main_c} fails and let $u$ be the minimal blowup solution given by Theorem~\ref{ap}.  By Lemma \ref{virial},
\begin{align*}
\partial_t \Im\int_\Omega \bar u(t,x)\partial_k u(T,x)\partial_k\phi_R(x) \,dx &\geq\int_\Omega4|\nabla u(t,x)|^2-3|u(t,x)|^4 \,dx\\
&\quad -O\Bigl(R^{-2}+\int_{|x|\ge R}|u(t,x)|^4+|\nabla u(t,x)|^2 \,dx\Bigr).
\end{align*}
By Proposition~\ref{coer}, there exists a small constant $c>0$ such that
\begin{align*}
\int_\Omega4|\nabla u(t,x)|^2-3|u(t,x)|^4 \,dx\ge c,
\end{align*}
uniformly for $t\in\R$. Therefore, as the orbit of $u$ is precompact in $H^1_0(\Omega)$, we can guarantee that
\begin{align*}
\partial_t \Im\int_\Omega \bar u(t,x)\partial_k u(T,x)\partial_k\phi_R(x) \,dx\ge \tfrac c2,
\end{align*}
by taking $R$ large enough.  Integrating this over $[0,T]$ and using Cauchy--Schwarz, we get
\begin{align*}
\tfrac c2 T&\leq \Bigl|\Im\int_{\Omega}\bar u(T,x)\partial_k u(T,x)\partial_k\phi_R(x)\,dx-\Im\int_\Omega\bar u_0(x)\partial_k u_0(x)\phi_R(x) \,dx\Bigr|\\
&\lsm R\|u\|_{L_t^\infty L^2_x(\R\times\Omega)}\|\nabla u \|_{L_t^\infty L^2_x(\R\times\Omega)}\lsm R.
\end{align*}
Taking $T$ sufficiently large, we derive a contradiction.
\end{proof}

\section{Proof of Proposition~\ref{P:blowup}}\label{S:blowup}

The goal of this section is to prove Proposition~\ref{P:blowup}.  Recall that without loss of generality, we may assume that $0\in \Omega^c\subset B(0,1)$.

We will prove the proposition for $\lambda=1/2$, because for this value of $\lambda$, the optimizer is the unrescaled ground state $Q$:
\begin{align*}
\inf_{\mu>0} F^{1/2} (Q^\mu) = F^{1/2} (Q)=F^{1/2}_*.
\end{align*}
The proof for arbitrary $\lambda\in (0, \infty)$ runs exactly parallel, but with messier formulas.  Indeed, the general case can be reduced to $\lambda=1/2$ via rescaling.

To see that $Q$ does indeed correspond to $\lambda=1/2$, one needs to exploit the Pohozaev identities obeyed by solutions to $\Delta Q + Q^3 = Q$, namely,
\begin{equation}\label{E:Poho}
\int_{\R^3}|Q(x)|^2\, dx=\tfrac14\int_{\R^3}|Q(x)|^4\, dx = \tfrac13 \int_{\R^3} |\nabla Q(x)|^2\, dx.
\end{equation}

To construct our sequence of solutions with diverging spacetime bounds, we consider the initial data:
$$
\phi_{n}(x):=(1-\eps_n) \chi(x)Q(x+x_n) \qtq{where} \eps_n\to 0, \ |x_n|\to \infty,
$$
and $\chi$ is a smooth cutoff such that $\chi(x)=0$ for $|x|\leq 1$ and $\chi(x)=1$ for $|x|\geq 2$.  More precisely, for a fixed sequence $\{\eps_n\}$ converging to zero, we choose $\{x_n\}\subset \R^3$ marching off to infinity rapidly enough that
$$
\|\nabla \phi_{n}\|_{L^2(\Omega)} \|\phi_{n}\|_{L^2(\Omega)}< \|\nabla Q\|_{L^2(\R^3)}\|Q\|_{L^2(\R^3)} \qtq{and} F^{1/2}(\phi_n) \nearrow F^{1/2}_*.
$$
That this is possible follows from the dominated convergence theorem and the following consequence of \eqref{E:Poho}:
\begin{align*}
F^{1/2}((1-\eps)Q) = \bigl[1-4\eps^2+O(\eps^3)\bigr] F^{1/2}_* \quad\text{for $\eps$ small enough}.
\end{align*}

By Proposition~\ref{coer}, there exists a unique global solution $u_n:\R\times\Omega\to \C$ to \eqref{nls} with initial data $u_n(0)=\phi_n$.
We will prove that for $n$ sufficiently large,
\begin{align*}
\tilde u_n(t,x):= e^{it} \phi_n(x)
\end{align*}
is an approximate solution to \eqref{nls} on any fixed compact time interval $[-T,T]$.  More precisely, we will show that for a fixed $T>0$,
\begin{align}\label{blowup approx}
\lim_{n\to \infty}\bigl\| (i\partial_t + \Delta_\Omega) \tilde u_n + |\tilde u_n|^2\tilde u_n\bigr\|_{N^1([-T,T])}=0.
\end{align}
Therefore, by Lemma~\ref{lm:stability}, for $n$ sufficiently large we get
$$
\|u_n\|_{L_{t,x}^5([-T,T]\times\Omega)} \gtrsim \|\tilde u_n\|_{L_{t,x}^5([-T,T]\times\Omega)} \gtrsim_Q T.
$$
As $T>0$ is arbitrary, this gives the desired blowup of the $L^5_{t,x}$-norm as $n\to \infty$.

It remains to prove \eqref{blowup approx}.  A simple computation shows that
\begin{align*}
&\bigl[(i\partial_t + \Delta_\Omega)\tilde u_n + |\tilde u_n|^2\tilde u_n\bigr](x)\\
&= e^{it}(1-\eps_n)\bigl[ (\chi^3-\chi)(x) Q(x+x_n) +2\nabla \chi(x)\nabla Q(x+x_n) + \Delta\chi(x) Q(x+x_n)  \bigr]\\
&\quad -e^{it}\eps_n(1-\eps_n)(2-\eps_n)\chi^3(x) Q(x+x_n).
\end{align*}
By the dominated convergence theorem, we see that the $N^1([-T,T])$-norm of the sum in the square brackets converges to zero as $|x_n|\to \infty$.  Taking $\eps_n\to 0$, one can also render arbitrarily small the $N^1([-T,T])$-norm of the last term on the right-hand side of the equality above.  This proves \eqref{blowup approx} and so completes the proof of Proposition~\ref{P:blowup}.\qed

\end{document}